\documentclass[11pt]{amsart}
\usepackage{amsfonts,latexsym,amsmath, amssymb}
\usepackage{mathrsfs}
\usepackage{bm}
\usepackage{xcolor}
\usepackage{color}
\usepackage{amsmath,nccmath}
\usepackage{cite}
\usepackage[utf8]{inputenc}
\usepackage{graphicx,enumerate}
\usepackage{enumitem}
\usepackage{epsfig}
\usepackage{graphicx}
\usepackage{graphics}
\usepackage{cite}
\usepackage{enumerate}
\usepackage{caption,subcaption}
\newcommand{\bea}{\begin{eqnarray}}
	\newcommand{\eea}{\end{eqnarray}}
\def\beaa{\begin{eqnarray*}}
	\def\eeaa{\end{eqnarray*}}
\def\ba{\begin{array}}
	\def\ea{\end{array}}
\def\be#1{\begin{equation} \label{#1}}
	\def \eeq{\end{equation}}

\makeatletter
\@namedef{subjclassname@2020}{\textup{2020} Mathematics Subject Classification}
\makeatother
\def\be{{\beta}}

\def\R{{\mathbb{R}}}

\def\Z{{\mathbb{Z}}}

\def \CJ{c^\mathit{cj}}
\def\sgH2{\sigma_H^2}
\def\sgL2{\sigma_L^2}

\newtheorem{theorem}{Theorem}[section]
\newtheorem{lemma}[theorem]{Lemma}
\newtheorem{proposition}[theorem]{Proposition}
\newtheorem{corollary}[theorem]{Corollary}
\newtheorem{definition}[theorem]{Definition}
\newtheorem{remark}[theorem]{Remark}

\newtheorem{claim}[theorem]{Claim}

\newenvironment{claimproof}
{\vspace{-1pc}\proof}
{\endproof}

\numberwithin{equation}{section}

\numberwithin{equation}{section}

\allowdisplaybreaks
\begin{document}
	
	\title[Slow-fast system in Rosales-Majda combustion model]{Slow-fast system in Rosales-Majda combustion model with fractional order kinetics}

	\author[C.-M. Brauner]{Claude-Michel Brauner}
		\address[C.-M. Brauner]{Institut de Math\'ematiques de Bordeaux UMR CNRS 5251, universit\'e de Bordeaux, 33405 Talence, France}
	\email{{\tt claude-michel.brauner@u-bordeaux.fr}}
		\author[J. Jing]{Jinlong Jing}
	\address[J. Jing]{School of Mathematics and Statistics, Yunnan University, Kunming, 615000, China}
	\email{{\tt jingjinlong@stu.ynu.edu.cn}}
		\author[R. Roussarie]{Robert Roussarie}
	\address[R. Roussarie]{IMB UMR 5584, Universit\'e de Bourgogne Europe, CNRS, F-21000 Dijon, France}
	\email{{\tt Robert.Roussarie@u-bourgogne.fr}}

	\keywords{Traveling reactive shock waves, slow-fast differential systems, Poincaré-Bendixson theory, bifurcation diagram, free interface, combustion}
	\subjclass[2020]{35C07, 34E13, 34C05, 34A26, 80A25}

	\begin{abstract}
		We consider traveling wave solutions of a one-dimensional model for detonation waves derived by Rosales and Majda, when the reaction order $\alpha$ belongs to $[0,1)$. The chemical kinetics is a simplified Arrhenius law or a Heaviside function. The model in the reaction zone is a slow-fast dynamical system for a vector representing temperature and mass fraction, which depends on the velocity $c$ and small viscosity $\beta$. Our goal in this paper is to study the bifurcation diagram in the $(\beta,c)$ parameter space and identify the nature of the trajectories corresponding to viscous shock waves. The demonstrations are based on a variety of techniques including the Poincar\'e-Bendixson theorem and the Fenichel theory. Theoretical results are confirmed by numerical computations.
	\end{abstract}
	\maketitle
	
	\centerline{\sl Dedicated to Roger Temam on his 85th Birthday, with Respect and Admiration.}
	
	\section{Introduction}
	
	In this paper, we consider the one-dimensional model derived by Rosales and Majda (see \cite{RM83}) for studying detonation waves when the wave velocity approaches the speed of sound, in the wake of the famous ``Majda model'' introduced in 1981 by Andrew Majda (see \cite{M81,CMR,MR}). More specifically, according to \cite{R92,Li92}, we will study a range of models where the order of the chemical reaction, denoted by $\alpha$, lies between $0$ and $1$ (see \cite{BRSZ21} for a similar problem). The original model proposed by Rosales and Majda corresponds to $\alpha=1$, see \cite{RM83,S99}.
	
	In dimensionless variables, the Rosales--Majda model with fractional reaction order $0\leqslant\alpha<1$ is expressed as follows:
	\begin{equation}\label{fractional_RM}
		\begin{cases} 
			T_t + \left(\frac{T^2}{2} - q_0 Z\right)_x =\beta T_{xx},\\[1.5mm]
			Z_x=K\varphi(T)Z^{\alpha},\\
			\lim_{x \to +\infty}Z(x,t)= 1.
		\end{cases} 
	\end{equation}
	In the system \eqref{fractional_RM}, $x\in\R$ and $t>0$ are the space and time variables, respectively, $T(x,t)$ is a lumped variable representing the temperature, $Z(x,t)$ is the mass fraction such that $0\leqslant Z(x,t)\leqslant 1$. Furthermore, $q_0>0$ is the heat release, $\beta>0$ is the (small) viscosity and $K>0$ is the reaction rate, $\varphi(T)$ stands for the chemical kinetics.
	
	Our aim is to study the existence of traveling wave solutions  $T(\xi)$, $\xi=x-ct$, of system \eqref{fractional_RM} 
	propagating at a velocity $c>0$ and connecting two equilibria $T_{-}$ and $T_{+}$ such that $T_{+}<T_i<T_{-}$ where $T_i$ is the ignition temperature. The structure of a solution is a \textit{traveling reactive shock wave}, namely a viscous shock followed by a reaction zone that ends when $Z$ vanishes. More precisely, assuming for convenience that $T$ reaches the ignition temperature at $\xi=0$, the real line will be divided into the shock wave region $\{\xi>0\}$, the reaction zone $\{-\ell<\xi<0\}$, and beyond, the region $\{\xi<-\ell\}$. Here, $\xi=-\ell$ is a finite \textit{free interface} (see \cite{BRSZ21,Li92}) defined by $Z(-\ell)=0$. Obviously, the free interface is at~$-\infty$ when $\alpha=1$ as in \cite{RM83,S99}.
	
	In this paper, in addition to the free interface, there are other significant differences from \cite{RM83,S99}. We will reformulate the problem in the reaction region as a slow-fast system for small viscosity $\beta$, see Section \ref{slowfast_intro}. Moreover, in contrast to \cite{RM83,S99}, the heat release $q_0$ is a fixed parameter. Instead, we will search for a bifurcation diagram in the parameter space $(\beta,c)$, see Figure \ref{diagram_intro}.

	\subsection{Formulation of the problem}\label{model}
	
	We now take a closer look at the model \eqref{fractional_RM}. According to \cite{RM83}, the chemical kinetics is an ignition temperature kinetics of the Arrhenius type. In the following we assume that the ignition temperature $T_i$ is a real number such that (see \cite{Li92})
	\begin{align}\label{condition_Ti}
		0<T_i<\sqrt{2q_0}.
	\end{align}
	For simplicity, we assume also that $\varphi$ is discontinuous at the ignition temperature $T_i$, 
	$\varphi(T)=0$ for $T\in [0,T_i)$, and is sufficiently smooth and positive whenever for $T\geqslant T_i$. There will be an additional technical hypothesis for large $T$, see below.  In this paper, the following two examples of functions $\varphi$ are considered:
	\begin{enumerate}[label=(\roman*), itemsep=1ex]
		\item the Heaviside $\varphi(T)= H(T-T_i)$; 
		\item a simplified form of the Arrhenius law that reads (see also \cite{HLZ13,HHLZ15})
		\begin{align}
			\varphi(T)= H(T-T_i) \exp\left(-\frac{T_a}{T}\right),
		\end{align}
	\end{enumerate}
	where $T_a=E_a/R>0$, where $E_a$ is the activation energy and $R$ is the gas constant (see, e.g., \cite[Section 1.3]{BL}). Obviously, case (i) corresponds to the limit case $T_a=0$, so the Heaviside function can be regarded as a simplification of the Arrhenius law.

	For given $K, q_0>0$, $0\leqslant\alpha<1$, we seek a traveling wave solution $(T(x-ct),Z(x-ct))$, propagating from the left to right with velocity $c>0$ to be determined. In the coordinate $\xi=x-ct$,
	%\begin{align*}
	%\xi=x-ct,
	%\end{align*}
	we search for $T(\xi)$ and $Z(\xi)$,  continuously differentiable and continuous functions, respectively, solutions of
	\begin{equation}\label{TW_2d_order}
		\begin{cases} 
			-cT_{\xi} + \left(\frac{T^2}{2} - q_0 Z\right)_{\xi} =\beta T_{\xi\xi}, \\[2mm]
			Z_{\xi}=K\varphi(T)Z^{\alpha}.
		\end{cases} 
	\end{equation}
	with conditions at infinity as follows:
	\begin{equation}\label{asymptotics_TW}
		\begin{cases} 
			\lim_{\xi \to +\infty}T(\xi)= T_{+}, \quad \lim_{\xi \to +\infty}Z(\xi)= 1,\\[1mm]
			\lim_{\xi \to -\infty}T(\xi)= T_{-},
		\end{cases} 
	\end{equation}
	where $T_{-}$ and $T_{+}$ are such that $T_{+}<T_i<T_{-}$.
	
	Integration of \eqref{TW_2d_order} yields 
	\begin{equation}\label{TW_1st_order}
		\begin{cases} 
			\beta T_{\xi}= -cT +\frac{T^2}{2} - q_0 Z +A, \\[2mm]
			Z_{\xi}=K\varphi(T)Z^{\alpha}.
		\end{cases} 
	\end{equation}
	where $A$ is a constant. Throughout this paper, for simplicity, we choose $T_+=0$; hence, $A=q_0$ and the differential system reads as follows:
	\begin{equation}\label{TW_1st_order}
		\begin{cases} 
			\beta T_{\xi}= -cT +\frac{T^2}{2} + q_0(1-Z), \\[2mm]
			Z_{\xi}=K\varphi(T)Z^{\alpha},
		\end{cases} 
	\end{equation}
	with conditions \eqref{asymptotics_TW} at infinity. 
	
	Moreover, taking advantage of the translation invariance, we fix
	\begin{align}\label{translation_inv}
		T(0)=T_i,
	\end{align}
	and seek a solution which verifies
	\begin{align}\label{}
		T(\xi)<T_i\;\, {\rm when}\;\, \xi>0, \qquad 	T(\xi)>T_i \;\, {\rm when}\;\, \xi<0,
	\end{align}
	hence, $T_{-}= \lim_{\xi \to -\infty}T(\xi)>T_i$.
	\vskip 2mm
	As we mentioned above, there are the following three regions, keeping in mind that $T \in C^1(\R), Z \in C^0(\R)$: 
	\begin{enumerate}[label=(\roman*), wide, labelwidth=!, labelindent=0pt]
		\item The shock wave region $\{\xi>0\}$: because $T(\xi)<T_i$, it holds $\varphi(\xi)=0$, therefore $Z(\xi)=1$ whenever $\xi>0$. The system \eqref{TW_1st_order} for $(T,Z)$ reads simply in the shock wave region:
		\begin{equation}\label{shock_system}
			\begin{cases} 
				\beta T_{\xi}=-cT+ \frac{T^2}{2}, \\[1mm]
				Z\equiv 1,\\[1mm]
				T(0)=T_i, \quad \lim_{\xi \to +\infty}T(\xi)=0.
			\end{cases} 
		\end{equation}
		
		\item The reaction zone $\{-\ell<\xi<0\}$. The quantity $-\ell,\, \ell>0$, is a free interface that we may also call the \textit{trailing interface} by analogy with \cite{BGKS15,BRSZ21}, defined by
		\begin{align}
			-\ell = \inf\left\{\xi<0, Z(\xi)>0\right\}.
		\end{align}
		Consider the case where $\varphi(T)= H(T-T_i)$: because $T>T_i$ it holds $\varphi(T)=1$ when $\xi<0$, hence $Z_{\xi}=KZ^{\alpha}$ for $\xi<0$ with $Z(0)=1$. Then, 
		\begin{align*}
			\ell=\frac{1}{K(1-\alpha)}, \quad 0\leqslant\alpha<1,
		\end{align*}
		that is, $\ell<+\infty$ (see \cite{Li92}). 
		
		Therefore, we anticipate that, whenever $0\leqslant\alpha<1$, the reaction zone is finite and the system \eqref{TW_1st_order} for $(T,Z)$ reads
		\begin{equation}\label{reaction_system}
			\begin{cases}
				\beta T_{\xi}= -cT+ \frac{T^2}{2} +q_0(1-Z), \\[1mm]
				Z_{\xi}= K\varphi(T)Z^{\alpha},\\[1mm]
				T(0)=T_i, \quad Z(0)=1,
			\end{cases} 
		\end{equation} 
		and the condition $Z(-\ell)=0$ which determines $\ell$ and provides the value of $T(-\ell)$. 
		\vskip 2mm
		\item In the region $\{\xi<-\ell\}$ following the finite reaction zone, the system \eqref{TW_1st_order} for $(T,Z)$ reads:
		\begin{equation}\label{beyond_reaction}
			\begin{cases}
				\beta T_{\xi}= -cT+ \frac{T^2}{2} +q_0, \\[1mm]
				Z\equiv 0, \\[1mm]
				\lim_{\xi \to -\infty}T(\xi)=T_{-}.
			\end{cases} 
		\end{equation}
		The candidates for $T_{-}$ are $T_0(c)=c+\sqrt{c^2-2q_0}$ and $T_1(c)=c-\sqrt{c^2-2q_0}$ with the condition $c\geqslant\sqrt{2q_0}$. By analogy with detonations (see \cite{RM83}), the equilibria $(T_0(c),0)$ and $(T_1(c),0)$ correspond, respectively, to a \textit{strong reactive wave} and a \textit{weak reactive wave}.
		
		\begin{definition}\label{CJ}
			Again by analogy, the minimal value $c =\sqrt{2q_0}$
			is called the Chapman-Jouguet (CJ) velocity and will be denoted by $\CJ$ throughout the paper.
		\end{definition}
		We observe that $T_0(c)\geqslant c\geqslant \CJ >T_i$ (see \eqref{condition_Ti}) and also $T_1(c)>0.$ 
		For $c\rightarrow +\infty$  we have that $T_0(c)\rightarrow +\infty$ and $T_1(c)\rightarrow 0_+$ (more precisely, $T_1(c)\sim {q_0}/{c^2}$).
	\end{enumerate}

	\subsection{Background results}\label{main_section}
	
	In contrast with \cite{RM83,S99}, we consider system \eqref{shock_system}--\eqref{beyond_reaction} at fixed $q_0>0$ and look for traveling wave solutions $(T(\xi),Z(\xi)) \in C^1(\R)\times C^0(\R)$ which travel at speed $c\geqslant \CJ$. Trajectories which connect equilibria $(T_0(c),0)$, $(T_1(c),0)$, $(\sqrt{2q_0},0)$ to the equilibrium state $(T_{+},1)$ correspond, respectively, to strong reactive solutions, weak reactive solutions, Chapman-Jouguet reactive solutions.
	
	More specifically, when $c>\CJ$, there are three types of strong reactive traveling waves such that $T_{-}=T_0(c)$: 
	\begin{enumerate}[label=(\roman*)]
		\item  monotonic solutions such that $T(\xi) \to T_0(c)$ exponentially when $\xi \to -\infty$;
		\item  non-monotonic solutions with a bump (a combustion spike), in the reaction zone such that $T(\xi) \to T_0(c)$ exponentially when $\xi \to -\infty$;
		\item  special solutions, or solutions of special type, such that $T\equiv T_0(c)$ beyond the finite reaction zone. We may also call them \textit{finite time solutions} in the context of the slow-fast system and the ``time'' $\tau$. 
	\end{enumerate}
	
	There are also weak reactive waves of special type such that $T_{-}=T_1(c)$ and $T\equiv T_1(c)$ beyond the finite reaction zone.
	
	In the case $c=\CJ$, then $T_{-}=\sqrt{2q_0}$ and there are only two types of solutions, namely Chapman-Jouguet (CJ) solutions with a bump such that  $T(\xi) \to \sqrt{2q_0}$ algebraically when $\xi \to -\infty$, and solutions of special type such that $T\equiv \sqrt{2q_0}$ beyond the finite reaction zone.
	
	%There may also be weak detonation waves such that $T(\xi) \to T_{-}=T_1(c)$ exponentially when $\xi \to -\infty$. 
	\vskip 2mm
	We summarize our results in the following theorems,  with the notation
	\begin{align}\label{c_star}
		c_*= \frac{T_i^2+2q_0}{2T_i}.
	\end{align}
	As $T_i>0$ and $q_0>0,$ we have that $c_* > c^{c_j}.$
	In the domain $(\beta>0,c \geqslant \CJ)$, it holds (see Figure \ref{diagram_intro}):
	\begin{theorem}\label{theorem_intro1}
		(i) There exists a function  $\beta_0(c) :[\CJ,+\infty)\rightarrow \R^+,$ 
		of class $\big[\frac{1}{1-\alpha}\big],$ such that there is a strong reactive solution of special type for $\beta=\beta_0(c)$;\\
		(ii) there exists a function 
		$\beta_1(c) :\big[\CJ,c_{\ast}\big)\rightarrow \R^+,$ of class $\big[\frac{1}{1-\alpha}\big],$ such that there is a weak reactive solution of special type for $\beta=\beta_1(c);$\\
		(iii) the curves $\beta_0(c)$ and $\beta_{1}(c)$ enjoy the following properties:
		\begin{enumerate}[label=(\alph*)]
			\item for  all $c>\CJ$, we have that $\beta_0(c)<\beta_1(c);$ 
			\item $\beta_0(c)\rightarrow +\infty$ as $c\rightarrow +\infty$;
			\item $\beta_1'(c)>0$ for $c\in \big(\CJ,c_{\ast}\big),$ moreover $\beta_1(c)\rightarrow +\infty$ when $c$ $\to c_{\ast}$;
		\end{enumerate}	
		(iv) for $c>\CJ, 0<\beta<\beta_0(c)$, there is a strong reactive solution with a bump; for $\beta_0(c)<\beta<\beta_1(c)$, there is a strong reactive wave of monotonic type; finally, when $\CJ<c<c_{\ast}$ and $\beta \geqslant \beta_1(c)$, there is no solution. 
	\end{theorem}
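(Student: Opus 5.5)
The plan is to work in the reaction zone with the slow-fast formulation. After the reparametrization $d\xi=\beta\, d\tau$ (or, where convenient, with $Z$ as independent variable) the system \eqref{reaction_system} becomes a planar vector field in $(T,Z)$:
\[
\dot T=-cT+\tfrac{T^2}{2}+q_0(1-Z),\qquad \dot Z=\beta K\varphi(T)Z^{\alpha}.
\]
Here $\beta$ is the small parameter, and the critical manifold is the parabola $q_0(1-Z)=cT-T^2/2$. It has two branches: $T=c+\sqrt{c^2-2q_0(1-Z)}$, which is repelling, and $T=c-\sqrt{c^2-2q_0(1-Z)}$, which is attracting. At $Z=0$ these branches end at $T_0(c)$ and $T_1(c)$.

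First, the shock region. Equation \eqref{shock_system} has a solution with $T(0)=T_i$ and $T\to 0$ exactly when $0<T_i<2c$. This holds because $c\ge \CJ>T_i$, so the starting point $(T_i,1)$ is always available. The whole problem therefore reduces to shooting: take the unique orbit $\gamma_{\beta,c}$ of the reaction system starting at $(T_i,1)$, and determine where it hits the line $Z=0$, at the value $T(-\ell)=:\Theta(\beta,c)$. The fate of the wave in region (iii) is then read off from the one-dimensional equation \eqref{beyond_reaction}. Its phase line has $T_1(c)$ as a repeller and $T_0(c)$ as an attractor as $\xi\to-\infty$, since $\xi$ decreases.

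Reading off the cases:
\begin{itemize}
\item If $\Theta>T_1(c)$, the orbit tends to $T_0(c)$ exponentially. This gives a strong wave, monotone or with a bump according to whether $\Theta<T_0$ or $\Theta>T_0$ (with the orbit first crossing the upper branch).
\item $\Theta=T_0$ gives the special strong wave.
\item $\Theta=T_1$ gives the special weak wave.
\item $\Theta<T_1$ gives no bounded solution.
\end{itemize}
The boundary curves are therefore $\beta_0(c)=\{\Theta=T_0(c)\}$ and $\beta_1(c)=\{\Theta=T_1(c)\}$. Their regularity comes from the $C^{[1/(1-\alpha)]}$ dependence of the flow near $Z=0$. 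Indeed, $Z^{\alpha}$ is only Hölder there, and using $Z^{1-\alpha}$ as a coordinate gives a vector field with finite smoothness of exactly that order. The implicit function theorem then applies once I show $\partial_\beta\Theta\ne 0$.

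For this monotonicity, the approach is to compare orbits for $\beta<\beta'$. In the $(Z,T)$ plane the slope is
\[
\frac{dT}{dZ}=\frac{-cT+T^2/2+q_0(1-Z)}{\beta K\varphi(T)Z^\alpha},
\]
so increasing $\beta$ decreases $|dT/dZ|$ away from the critical curve. By a standard comparison argument (two orbits cannot cross transversally in the wrong direction, checked on the critical curve where $dT/dZ=0$), $\Theta$ is strictly decreasing in $\beta$. Combining this with the limits as $\beta\to0$ and $\beta\to\infty$ gives existence and uniqueness of each curve.

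As $\beta\to 0$: for small $\beta$ the orbit starting at $(T_i,1)$ lies above the repelling branch near $Z=1$, since $T_i<c$ sits at the lower branch at $Z=1$ where $-cT_i+T_i^2/2<0$. I would check where the start point lies relative to the branches. Fenichel theory along the attracting branch then gives $\Theta\to T_1$ from the appropriate side. I need to verify the side carefully, because the claim is that $\beta$ small gives a bump, which means $\Theta>T_0$; this suggests the orbit jumps up fast to large $T$. Since $\dot T>0$ at $(T_i,1)$ reversed in $\xi$, the orbit goes up in $T$ as $\xi$ decreases: it jumps to $T=+\infty$-like values and then follows the fast fibre, so $\Theta\to+\infty>T_0$.

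As $\beta\to\infty$: the orbit approaches the vertical segment $T\approx T_i$. So $\Theta\to T_i<T_1(c)$ precisely when $T_1(c)>T_i$, that is, $c<c_*$. A computation of $T_1(c)=T_i\iff c=c_*$ confirms this, and it yields the blow-up $\beta_1\to\infty$ as $c\to c_*$ and the nonexistence for $\beta\ge\beta_1$. For $c\ge c_*$ only $\beta_0$ persists. Property (a) follows from the strict monotonicity in $\beta$ together with $T_1<T_0$. For (b) and $\beta_1'>0$, I would differentiate $\Theta(\beta_1(c),c)=T_1(c)$ and use a similar comparison in $c$.

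The main obstacle I expect is the rigorous monotonicity of $\Theta$ in $\beta$ and $c$, uniformly including the nonsmooth endpoint $Z=0$. The Arrhenius factor is handled in the same way, since it only rescales $\beta$ by a positive $T$-dependent factor.
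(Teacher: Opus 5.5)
Your skeleton (shoot backward from $(T_i,1)$, read the type off the hitting point $\Theta$ on $\{Z=0\}$, get regularity from $U\propto Z^{1-\alpha}$, close with the implicit function theorem) is essentially the paper's; the paper uses the inverse map $Z(T,\beta,c)$ from $(T,0)$ to $\{T=T_i\}$. The $\beta_1'>0$ computation is also the same. There are, however, three genuine problems.

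\textbf{Monotonicity of $\Theta$.} Your key monotonicity claim is false: $\Theta$ is \emph{not} decreasing in $\beta$. The slope comparison works only while both orbits stay in the central region $C_c$, where $-cT+T^2/2+q_0(1-Z)<0$. To the right of the upper branch $h^r_c$ that numerator is positive and the comparison reverses. At a crossing on $h_c$ itself both slopes vanish, so nothing is gained there.

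\textbf{Small-$\beta$ limit.} This analysis is also wrong. For $c>c^{\mathit{cj}}$ and $\beta\to0$, the backward orbit does not run to $T=+\infty$, and it does not go to $T_1$ (that is a forward/backward mix-up). The fast fibre $Z=1$ carries it from $T_i$ to $T=2c$, which lies on $h^r_c$, the branch that attracts the reversed flow. The orbit then tracks $h^r_c$ down to $(T_0(c),0)$, so $\Theta\to T_0(c)^+$. Since $\Theta(\beta_0(c),c)=T_0(c)$ and $\Theta>T_0$ for $\beta\in(0,\beta_0(c))$, $\Theta$ must first increase. So neither your uniqueness argument nor your small-$\beta$ argument goes through. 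For small $\beta$ you must show the orbit actually crosses $h^r_c$. The paper does this with a tube around $h^r_c$ that traps the reversed flow, plus a mean-value argument producing a zero of $dT/dZ$ (Proposition \ref{prop-beta0}).

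For monotonicity, the paper uses only orbits lying entirely in $\overline{C_c}$, namely those from $(T,0)$ with $T\in[T_1(c),T_0(c)]$. There $\partial_\beta \tilde X^U=\varphi(T)\partial_U$ is transverse to $\tilde X^U$ with fixed orientation, so the rotating lemma gives $\partial_\beta Z>0$; together with $\partial_TZ>0$ and $\partial_cZ<0$, the IFT is applied to $Z(T_0(c),\beta,c)=1$ and $Z(T_1(c),\beta,c)=1$. The ordering $\beta_0<\beta_1$ then follows from $Z_1<Z_0$. In your framework you could salvage uniqueness by noting that at every zero of $\Theta-T_0$ or $\Theta-T_1$ the orbit lies in $\overline{C_c}$, so $\partial_\beta\Theta<0$ there. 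That is not what you argued, though.

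\textbf{Part (iii)(b), $\beta_0(c)\to+\infty$.} This has no valid argument in your plan, and differentiating $\Theta(\beta_0(c),c)=T_0(c)$ cannot supply one. Since $T_0'(c)>0$ and $\partial_TZ>0$ compete with $\partial_cZ<0$, $\beta_0'(c)$ has no definite sign. Indeed $\beta_0$ has a turning point (Theorem \ref{theorem_intro3}), and even a sign would not give divergence.

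The paper needs an extra hypothesis $(H_\infty)$: $\psi(V)=\varphi(1/V)$ is smooth at $V=0$ with $\psi(0)>0$. It then runs a second slow-fast analysis with small parameter $\varepsilon=1/c$. After dividing the field by $-c$ and compactifying with $V=1/T$ (on $Z\ge\delta>0$), the line $\{V=0\}$ is a normally hyperbolic attracting critical line with nonsingular slow dynamics. Fenichel theory gives a center manifold $V=\frac{\varepsilon}{2}+\frac{q_0(1-Z)}{4}\varepsilon^3+O(\varepsilon^4)$ on $[\delta,Z_1]$, uniformly for $\beta$ in compacts. The orbit through $(T_i,1)$ is exponentially close to it, so $\partial_Z\tilde\gamma<0$ there, while $\partial_Z\tilde\gamma\to+\infty$ at $Z=1$. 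Hence for each fixed $\beta$ the orbit is of bump type once $c$ is large, i.e.\ $\beta_0(c)\geqslant\beta$. Nothing in your proposal substitutes for this step.
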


	An important result is that the bifurcation diagram admits (at least) two critical points. The first one belongs to the axis $c=\CJ$.
	
	\begin{theorem}\label{prop-global_intro2}{\rm(Critical point $(\beta^{\mathit{cj}}, \CJ)$)}. It holds:
		\begin{enumerate}[label=(\roman*)]
			\item
			for $c=\CJ$, there exists a unique $\beta^{\mathit{cj}}\in (0,+\infty)$ such that $\beta^{\mathit{cj}}=\beta_0(\CJ)=\beta_1(\CJ)$;
			\item 
			at the critical point $(\beta^{\mathit{cj}}, \CJ)$, there is a Chapman-Jouguet reactive wave of special type.
			\item
			the union of the graphs of $\beta_0$ and $\beta_1$ with the point $(\beta^{\mathit{cj}},\CJ)$ is a simple curve $(B)$ in the parameter space, of class $\big[\frac{1}{1-\alpha}\big].$ 
			The mapping $\tilde{c}$: $\beta\to c$ defined by the curve $(B)$ locally around $(\beta^{\mathit{cj}},\CJ)$ has a quadratic minimum at $\beta^{\mathit{cj}}$, i.e., ${\tilde{c}}'(\beta^{\mathit{cj}})=0$, ${\tilde{c}}''(\beta^{\mathit{cj}})>0$;
			\item for $c=\CJ$, $\beta\in(0,\beta^{cj})$, there is a Chapman-Jouguet reactive wave with a bump. 
		\end{enumerate}
	\end{theorem}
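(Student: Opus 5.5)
We recast the reaction zone as a planar autonomous system and read off the special solutions by a shooting argument. Put $\tau=-\xi/\beta$ and use $Z$ (or $u=Z^{1-\alpha}$) as a slow variable. In the Heaviside case ($\varphi\equiv 1$ for $T>T_i$) the zone reads
\[
\frac{dT}{d\tau}=cT-\frac{T^2}{2}-q_0(1-Z),\qquad \frac{dZ}{d\tau}=-\beta K Z^{\alpha},
\]
with $(T,Z)(0)=(T_i,1)$. The Arrhenius case is the same up to the positive factor $\varphi(T)$. Substituting $u=Z^{1-\alpha}$ gives $du/d\tau=-\beta K(1-\alpha)$, so $Z$ reaches $0$ at the finite time $\tau_\ell=1/(\beta K(1-\alpha))$. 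The equation for $T$ then becomes a non-autonomous scalar ODE with right-hand side $F(T,u)=cT-T^2/2-q_0\bigl(1-u^{1/(1-\alpha)}\bigr)$. This right-hand side is only of class $[\tfrac{1}{1-\alpha}]$ in $u$ near $u=0$, which is where the stated regularity comes from.

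At $c=\CJ$ the two equilibria $T_0,T_1$ of the post-reaction equation $\dot T=cT-T^2/2-q_0$ merge into the double point $\sqrt{2q_0}$. A special solution means $T(\tau_\ell;\beta,c)=\sqrt{2q_0}$ exactly. We define the shooting function
\[
G(\beta,c)=T(\tau_\ell(\beta);\beta,c)-T_0(c),\qquad \tilde G(\beta,c)=T(\tau_\ell(\beta);\beta,c)-T_1(c),
\]
so that $\beta_0,\beta_1$ are the zero sets of these functions. At $c=\CJ$ both reduce to $g(\beta)=T(\tau_\ell;\beta,\CJ)-\sqrt{2q_0}$.

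\textbf{Parts (i) and (ii).} We study $g$ and use the intermediate value theorem.
\begin{itemize}
\item As $\beta\to0$, the interval $\tau_\ell\to\infty$ and $T$ follows the slow manifold (Fenichel). That manifold is the repelling branch $T_0$-type root $c+\sqrt{c^2-2q_0(1-Z)}$ of $F=0$, but the trajectory starting at $T_i$ lies below it. Near $Z=1$ the point $T_i$ sits between the roots $0$ and $2c$, so $F>0$ and $T$ grows; it therefore overshoots and ends above $\sqrt{2q_0}$, forming a bump. Hence $g>0$.
\item As $\beta\to\infty$, the interval $\tau_\ell\to0$, so $T(\tau_\ell)\to T_i<\sqrt{2q_0}$. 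Hence $g<0$.
\end{itemize}
So $g$ has a zero. For uniqueness we compare trajectories for $\beta<\beta'$ written as functions of $u$: $dT/du=-F(T,u)/(\beta K(1-\alpha))$. At the endpoint $u=0$ the value $T(0)$ is strictly monotone in $\beta$. This follows from the variational equation once we show $F$ keeps a sign along the relevant trajectories (no crossing of the nullcline before the end). That sign property is the step to verify carefully.

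The zero $\beta^{cj}$ of $g$ is then the unique value, and at it $T\equiv\sqrt{2q_0}$ past $\xi=-\ell$, which is (ii). For $\beta<\beta^{cj}$ we have $T(\tau_\ell)>\sqrt{2q_0}$. The post-reaction flow then brings $T$ down to the semi-stable point algebraically, since $\dot T=-(T-\sqrt{2q_0})^2/2$. The overshoot is exactly the bump, which gives (iv).

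\textbf{Part (iii): the curve $(B)$.} We apply the implicit function theorem to
\[
H(\beta,c)=T(\tau_\ell;\beta,c)-c,
\]
whose zero set is the $c$-axis reflection of the two branches: $T_{0,1}(c)-c=\pm\sqrt{c^2-2q_0}$. Thus $(B)$ is the set where $\bigl(T(\tau_\ell)-c\bigr)^2=c^2-2q_0$, with the sign choosing the branch. Write $s=\sqrt{c-\CJ}$. Then $\pm\sqrt{c^2-2q_0}=\pm s\sqrt{c+\CJ}$ is smooth in $s$, and we solve
\[
\Phi(\beta,s)=T(\tau_\ell;\beta,\CJ+s^2)-\CJ-s^2-s\sqrt{2\CJ+s^2}=0.
\]
Here $s>0$ gives $T_0$ and $s<0$ gives $T_1$. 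We have $\partial_\beta\Phi(\beta^{cj},0)=g'(\beta^{cj})\neq0$ by the strict monotonicity above. The implicit function theorem then yields a single curve $\beta=b(s)$ of class $[\tfrac1{1-\alpha}]$ through $\beta^{cj}$.

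To get $\tilde c$ as a function of $\beta$ we need $b'(0)\neq0$. Since $\partial_s\Phi(\beta^{cj},0)=-\sqrt{2\CJ}\neq0$ (the $s^2$ terms vanish at $s=0$), $b'(0)\neq0$. Inverting, $s$ is a $C^1$ function of $\beta$ with $s'(\beta^{cj})\neq0$, and $c=\CJ+s(\beta)^2$. Hence $\tilde c'(\beta^{cj})=0$ and $\tilde c''(\beta^{cj})=2s'(\beta^{cj})^2>0$. Simplicity of $(B)$ then follows from the graph property on each side.

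\textbf{Main obstacle.} The hardest step is the strict monotonicity of $T(\tau_\ell)$ in $\beta$, i.e.\ $g'(\beta^{cj})\neq0$. I would first express $\partial_\beta T$ at $u=0$ via the linearized equation in the variable $u$,
\[
\partial_\beta T(0)=\int_0^1 \frac{F}{\beta^2K(1-\alpha)}\,e^{\int\ldots}\,du,
\]
and then show that $F>0$ along the trajectory at $c=\CJ$. For the Arrhenius case the same argument goes through with the positive factor $\varphi$ included.
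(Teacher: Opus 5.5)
Your scheme can be made to work. You shoot forward from $(T_i,1)$ to $\{Z=0\}$, where the paper shoots backward along $\gamma_0,\gamma_1$, and your variational formula plays the role of the rotating property (Lemma~\ref{lem-rotating}). As written, though, it has genuine gaps.

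\emph{The step you leave open.} This step is the whole content of uniqueness, and it is missing. $T(\tau_\ell)$ need not be globally monotone in $\beta$: bump orbits cross $h^r$, where $F$ changes sign, so your integrand has no sign there. What you need, and what suffices, is $g'<0$ at each zero of $g$. Here is the argument.
\begin{itemize}
\item Write $F=q_0Z-P_c(T)$. On $\{F=0\}$ one has $dF/d\tau=(c-T)F+q_0\,dZ/d\tau=-q_0\beta K\varphi(T)Z^{\alpha}<0$. So in your time direction $h_c$ is crossed only from $\{F>0\}$ to $\{F<0\}$.
\item At a zero of $g$, the orbit is the unique solution of $dT/du=-F/(\beta K(1-\alpha)\varphi)$ ending at $(\sqrt{2q_0},0)$.
\item At $c=\CJ$, $F=q_0u^{1/(1-\alpha)}-\frac{1}{2}(T-\sqrt{2q_0})^2$. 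Hence $T-\sqrt{2q_0}\sim-\kappa u^{(2-\alpha)/(1-\alpha)}$ with $\kappa>0$ (Lemma~\ref{lem-transverse}), so $F\sim q_0Z>0$ near the end.
\item One-way crossing then gives $F>0$ along the whole orbit before its endpoint. The orbit lies in $C_{\CJ}$, and your formula yields $\partial_\beta T(\tau_\ell)<0$.
\end{itemize}

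\emph{The small-$\beta$ step.} In your time direction $\partial_TF=c-T<0$ on the branch $T=c+\sqrt{c^2-2q_0(1-Z)}$, so this branch is attracting, not repelling. Also, ``$F>0$, so $T$ grows, hence it overshoots'' does not follow: growth only brings $T$ up to the branch. The mechanism is that of Proposition~\ref{prop-beta0}. Trap the orbit in a thin tube around $h^r$ over $Z\in[Z_b,1]$; the mean value theorem then forces a crossing of $h^r$ at some $Z\ge Z_b>0$. Afterwards the orbit stays in $\{F<0\}$ and ends with $T(\tau_\ell)>\sqrt{2q_0}$; equality is excluded by the previous paragraph. This avoids following a slow manifold into the non-hyperbolic point $(\sqrt{2q_0},0)$, where both branches of $h_{\CJ}$ meet.

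\emph{Your shooting function is not defined for all $\beta$.} For large $\beta$ the orbit stays $O(1/\beta)$-close to $\{T=T_i\}$ and crosses $h^l$ near $(T_i,Z_i(\CJ))$. To leading order, $T(\tau_\ell)-T_i$ has the sign of
$\int_0^1F(T_i,u^{1/(1-\alpha)})\,du=q_0\frac{1-\alpha}{2-\alpha}-P_{\CJ}(T_i)$.
This can be negative; it is for the values of Section~\ref{numerics}. Then the orbit leaves $\{T\ge T_i\}$ before $Z=0$ and the kinetics switches off. So ``$T(\tau_\ell)\to T_i$'' only makes sense for an extended equation. There you would still have to exclude blow-up at intermediate $\beta$ ($dT/d\tau\le-T^2/2$ once $T<0$) before using the intermediate value theorem.

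A clean replacement is a partition of $(0,\infty)$.
\begin{itemize}
\item Let $A$ (resp.\ $E$) be the set of $\beta$ for which the orbit crosses $h^r$ (resp.\ $h^l$). Let $D$ be the rest; there the orbit stays in $C_{\CJ}$ and so ends at $(\sqrt{2q_0},0)$.
\item $A$ and $E$ are open by transversality and disjoint by one-way crossing.
\item Small $\beta$ lie in $A$ and large $\beta$ in $E$, so $D\neq\emptyset$.
\item Since $g'<0$ on $D$, each point of $D$ has $A$ on its left and $E$ on its right. Hence $D=\{\beta^{cj}\}$ and $(0,\beta^{cj})\subset A$, which also gives (iv).
\end{itemize}
The paper sidesteps all this by shooting backward. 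The orbit $\gamma_0(\beta,c)$ stays in $\overline{C_c}$ and always reaches $\{T=T_i\}$ (Claim~\ref{claim(iv)}). So $Z_0(\beta,c)$ is defined for every $\beta$, with $\partial_\beta Z_0>0$ (Proposition~\ref{prop-rotating}).

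\emph{Part (iii).} Your treatment is correct and genuinely different from Proposition~\ref{prop-global}. The paper solves $Z(T,\beta,c)=1$ for $\beta(T,c)$ with $\partial_T\beta<0$. It then parametrizes the hyperbola $P_c(T)=0$ by the endpoint $T$, via $c(T)=\frac{T}{2}+\frac{q_0}{T}$. The two parametrizations differ by the local diffeomorphism $s\mapsto T=\CJ+s^2+s\sqrt{2\CJ+s^2}$. Your $\partial_s\Phi=-\sqrt{2\CJ}$ plays the role of the paper's $\partial Z/\partial T>0$.

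Your version buys something. $\tilde c=\CJ+s(\beta)^2$ gives $\tilde c'(\beta^{cj})=0$ and $\tilde c''(\beta^{cj})=2s'(\beta^{cj})^2>0$ with $s$ merely $C^1$. So the quadratic minimum holds for every $\alpha\in[0,1)$, whereas the paper does this computation under $C^2$ regularity ($\alpha\ge\frac{1}{2}$).

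Two small points remain.
\begin{itemize}
\item Identifying $b(s)$ with $\beta_0(\CJ+s^2)$ for $s>0$, and with $\beta_1(\CJ+s^2)$ for $s<0$, uses uniqueness of $\beta_0(c)$ and $\beta_1(c)$ for $c>\CJ$. This comes from Theorem~\ref{theorem_intro1}, or from the transversality argument above repeated at $c>\CJ$.
\item Your remark on $H=T(\tau_\ell)-c$ is wrong: its zero set is $\{T(\tau_\ell)=c\}$, not $(B)$. You never use it, though.
\end{itemize}
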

	The second critical point is a turning point in the domain $(\beta>0,c>\CJ)$.
	\begin{theorem}\label{theorem_intro3}{\rm{(Turning point $(\bar{\beta},\bar{c})$)}}
		There exists a turning point $(\bar{\beta},\bar{c})$ which verifies the following properties:
		\begin{enumerate}[label=(\roman*)]
			\item
			$\beta_{0}(\bar{c})=\bar{\beta}$, $\beta_{0}'(\bar{c})=0$;
			\item 
			$0<\bar{\beta}\leqslant\beta_{0}(c)$ for all $c\geqslant c^{cj}$;
			\item at the turning point $(\bar{\beta},\bar{c})$, there is a strong reactive wave of special type;
			\item for $\beta\in(0,\bar{\beta})$ and  $c>\CJ$, there is a strong reactive wave with a bump.  
		\end{enumerate}	
	\end{theorem}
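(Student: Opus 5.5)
The plan is to obtain the turning point as a global minimum of $\beta_0$ on $[\CJ,+\infty)$, and then to read off (iii) and (iv) from Theorem \ref{theorem_intro1}. By Theorem \ref{theorem_intro1}(i), $\beta_0$ is continuous (of class at least $C^1$, since $[\frac{1}{1-\alpha}]\geqslant 1$) and positive on $[\CJ,+\infty)$. By Theorem \ref{theorem_intro1}(iii)(b), $\beta_0(c)\to+\infty$ as $c\to+\infty$. So choose $C>\CJ$ with $\beta_0(c)>\beta_0(\CJ)$ for all $c\geqslant C$. On the compact interval $[\CJ,C]$ the continuous function $\beta_0$ attains its minimum at some point $\bar c$. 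Set $\bar\beta=\beta_0(\bar c)$. Then $\bar\beta\leqslant \beta_0(\CJ)<\beta_0(c)$ for $c\geqslant C$, so $\bar\beta\leqslant\beta_0(c)$ for all $c\geqslant\CJ$, and $\bar\beta>0$ because $\beta_0$ takes values in $\R^+$. This proves (ii).

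For (i) we need $\bar c>\CJ$; then $\bar c$ is an interior minimum and $\beta_0'(\bar c)=0$ by Fermat. This is the main obstacle: we must show that $\CJ$ is not the minimizer. I would use Theorem \ref{prop-global_intro2}(iii). There the curve $(B)$, i.e.\ the union of the graphs of $\beta_0$ and $\beta_1$ together with $(\beta^{cj},\CJ)$, is locally the graph $c=\tilde c(\beta)$ with a quadratic minimum at $\beta^{cj}$. By Theorem \ref{theorem_intro1}(iii)(a) we have $\beta_0<\beta_1$ for $c>\CJ$, so the branch $\beta<\beta^{cj}$ corresponds to $\beta_0$. Near $\beta^{cj}$ one has $\tilde c(\beta)-\CJ\sim \frac12\tilde c''(\beta^{cj})(\beta-\beta^{cj})^2$. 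Inverting on the branch $\beta<\beta^{cj}$ gives
\[
\beta_0(c)=\beta^{cj}-\sqrt{2(c-\CJ)/\tilde c''(\beta^{cj})}\,(1+o(1)).
\]
Hence $\beta_0(c)<\beta^{cj}=\beta_0(\CJ)$ for $c$ slightly larger than $\CJ$, and $\beta_0'(c)\to-\infty$ as $c\to\CJ^+$. So $\CJ$ is not a minimizer, $\bar c\in(\CJ,C)$, and $\beta_0'(\bar c)=0$. (If one insists on $\beta_0$ being differentiable at $\CJ$ itself, the same expansion shows the one-sided derivative is $-\infty$; either way the minimum is interior.) The delicate point is to check that the branch of $(B)$ with $\beta<\beta^{cj}$ really belongs to $\beta_0$ rather than $\beta_1$, which is exactly what the ordering $\beta_0<\beta_1$ gives.

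Statement (iii) is immediate from Theorem \ref{theorem_intro1}(i): at $\beta=\beta_0(\bar c)=\bar\beta$, $c=\bar c>\CJ$, there is a strong reactive solution of special type. For (iv), let $\beta\in(0,\bar\beta)$ and $c>\CJ$. By (ii), $\beta<\bar\beta\leqslant\beta_0(c)$, so $0<\beta<\beta_0(c)$, and Theorem \ref{theorem_intro1}(iv) yields a strong reactive solution with a bump.
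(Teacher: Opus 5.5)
Your proof is correct and follows essentially the route the paper intends: $\beta_0$ is continuous on $[c^{cj},+\infty)$, tends to $+\infty$ as $c\to+\infty$ (Proposition~\ref{prop-c-infty}), and drops below $\beta^{cj}$ just to the right of $c^{cj}$ (the paper phrases this as $\beta_0'(c)\to-\infty$ as $c\to c^{cj}$, a consequence of Proposition~\ref{prop-global}); hence its global minimum is attained at an interior point $\bar c$, and (iii), (iv) then follow from Theorem~\ref{theorem_intro1}. One small remark: $\beta_0(c)<\beta^{cj}$ for $c$ slightly above $c^{cj}$ already follows from the $T$-parametrization $\beta_0(c)=\tilde\beta(T_0(c))$ with $\tilde\beta'(\sqrt{2q_0})<0$ in the proof of Proposition~\ref{prop-global}, which needs only $C^1$ regularity, whereas the quadratic expansion of $\tilde c$ you invoke is established in the body only for $\alpha\geqslant\frac12$.
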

	
	A significant consequence is that:
	\begin{corollary}
		There is a continuum of admissible speed $\{c\geqslant\CJ\}$ whenever $\beta\in(0,\bar{\beta})$.
	\end{corollary}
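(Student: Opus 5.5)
The corollary will be read off from Theorems \ref{theorem_intro1}, \ref{prop-global_intro2} and \ref{theorem_intro3}; no new analysis of the slow-fast system should be needed. Fix $\beta\in(0,\bar\beta)$. We must produce, for every $c\geqslant\CJ$, a traveling reactive shock wave with viscosity $\beta$ and speed $c$. We split into the cases $c>\CJ$ and $c=\CJ$.

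\emph{Case $c>\CJ$.} We apply Theorem \ref{theorem_intro3}(iv) directly: for $\beta\in(0,\bar\beta)$ and $c>\CJ$ there is a strong reactive wave with a bump. Equivalently, Theorem \ref{theorem_intro3}(ii) gives $\beta<\bar\beta\leqslant\beta_0(c)$, so $(\beta,c)$ lies strictly below the curve $\beta_0$. Theorem \ref{theorem_intro1}(iv) then gives a strong reactive solution with a bump, connecting $(T_0(c),0)$ to $(0,1)$.

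\emph{Case $c=\CJ$.} Theorem \ref{theorem_intro3}(ii) applied at $c=\CJ$ gives $\bar\beta\leqslant\beta_0(\CJ)$. By Theorem \ref{prop-global_intro2}(i), $\beta_0(\CJ)=\beta^{\mathit{cj}}$. Hence $\beta<\bar\beta\leqslant\beta^{\mathit{cj}}$, i.e. $\beta\in(0,\beta^{\mathit{cj}})$. Theorem \ref{prop-global_intro2}(iv) then yields a Chapman--Jouguet reactive wave with a bump.

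Combining the two cases, every $c$ in the half-line $[\CJ,+\infty)$ is an admissible speed for the given $\beta$, which is the claimed continuum.

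The only delicate point is the endpoint $c=\CJ$. Theorem \ref{theorem_intro3}(iv) is stated only for $c>\CJ$, so the endpoint must be handled through the inequality $\bar\beta\leqslant\beta_0(\CJ)=\beta^{\mathit{cj}}$. This requires that item (ii) of Theorem \ref{theorem_intro3} genuinely includes $c=\CJ$ (it does, since it is stated for all $c\geqslant\CJ$) and that $\beta_0$ is defined and continuous up to $\CJ$, as asserted in Theorem \ref{theorem_intro1}(i).
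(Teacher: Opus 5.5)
Your argument is correct and matches how the paper gets the corollary: the paper states it without a separate proof, as an immediate consequence of Theorem \ref{theorem_intro3}(ii) and (iv), and at the endpoint $c=\CJ$ your chain $\beta<\bar\beta\leqslant\beta_0(\CJ)=\beta^{\mathit{cj}}$ together with Theorem \ref{prop-global_intro2}(iv) is right. The case split is not actually needed, since Theorem \ref{th-beta0(c)}(ii) already gives a bump solution for every $c\geqslant\CJ$ and $0<\beta<\beta_0(c)$, so the inequality $\beta<\bar\beta\leqslant\beta_0(c)$ handles all $c\geqslant\CJ$ at once.
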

	
	\begin{figure}[htp]
		\begin{center}
			\includegraphics[scale=0.9]{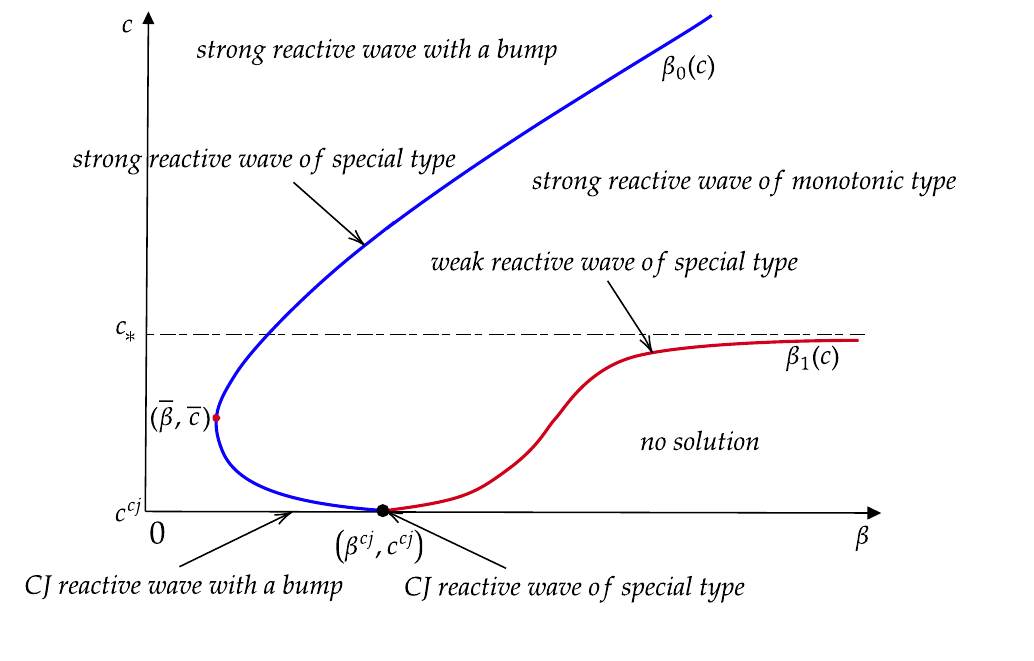}\\
			\caption{This figure provides a qualitative representation of the bifurcation diagram. Note that the only unproven point in this paper concerns the monotonicity of the branch of reactive waves of special type. Thus far, it has only been proven that this branch is locally the graph of $\beta$ as a function of $c$.}
			\label{diagram_intro}
		\end{center}
	\end{figure}
	
	\begin{remark}\label{rem-1}
		In Figure \ref{diagram_intro} there is a unique turning point for the function $\beta_0(c)$.  But this is only numerical evidence that remains to be proven.
	\end{remark}
	
	\subsection{Organization of the paper}\label{slowfast_intro}
	We focus on the region $\{\xi<0\}$ and the vector field
	\begin{equation}\label{vector_fiels}
		X :=
		\left\{
		\begin{array}{l}
			T_{\xi} = \frac{1}{\beta}\left\{-cT+\frac{1}{2}T^2+q_0(1-Z)\right\},  \\[2mm]
			Z_{\xi} = K\varphi(T) Z^\alpha.
		\end{array}
		\right.
	\end{equation}
	that we shall consider as a slow-fast system for $\beta\rightarrow 0$. As in \cite{RM83,S99}, we introduce
	the \textit{fast time} 
	\begin{align}\label{fast}
		\tau =\frac{\xi}{\beta}; 
	\end{align}
	Denoting by $\dot{f}$ the differentiation in $\tau$, the field $X$ turns into 
	\begin{equation}\label{eq-tau}
		\tilde X :=
		\left\{
		\begin{array}{l}
			{\dot T} = -cT+\frac{1}{2}T^2+q_0(1-Z),  \\[2mm]
			{\dot Z} =\beta K\varphi(T) Z^\alpha.
		\end{array}
		\right.
	\end{equation}
	First of all, because we are interested in small viscosity, unlike \cite {RM83,S99} we perform a straightforward change of variable in \eqref{eq-tau} that maintains $\beta$ in the system. We simply set:
	\begin{align}
		\widetilde{\beta}=\beta K,
	\end{align}  
	and hereafter omit the tilde. Therefore, we will consider the following slow-fast system on $Q=\{(T,Z)\in \R^2\  |\  T \geqslant T_i, Z\geqslant 0\}$:
	\begin{equation}\label{eq-tau-bis}
		\tilde X :=
		\left\{
		\begin{array}{l}
			{\dot T} = -cT+\frac{1}{2}T^2+q_0(1-Z),  \\[2mm]
			{\dot Z} =\beta \varphi(T)Z^\alpha.
		\end{array}
		\right.
	\end{equation}
	We write $P_c(T)=-cT+\frac{1}{2}T^2+q_0.$
	The roots of $P_c$ are $T_0(c)=c+\sqrt{c^2-2q_0}$ and $T_1(c)=c-\sqrt{c^2-2q_0}$ (see above), 
	the  zeros of $\tilde X$ on $Q$ are $(T_0(c),0)$ and $(T_1(c),0)$. Also,  $\tilde X$ has a quadratic contact with the vertical line $\{T=T_i\}$ at the point $(T_i,Z_i(c))$ where $Z_i(c)=\frac{1}{q_0}P_c(T_i).$   The point $(T_i,Z_i(c))$ belongs to $Q$ only if $T_1(c)\geqslant T_i$ which is the condition $c\leqslant c_*$ , see \eqref{c_star}.
	
	Section \ref{phase_portraits} is devoted to the study of the phase portraits. At the outset, we introduce the new variable $U=\frac{1}{1-\alpha}Z^{1-\alpha}$. Making the change of coordinates
	$\Phi := (T,Z)\rightarrow (T,U)$, the field $\tilde X$  becomes $\tilde X^U=\Phi_*(\tilde X)$
	\begin{equation}\label{intro_eq-tildeXU}
		\tilde X^U := 
		\left\{
		\begin{array}{l}
			{\dot T} = -cT+\frac{1}{2}T^2+q_0\Big(1-(1-\alpha)^{\frac{1}{1-\alpha}}U^{\frac{1}{1-\alpha}}\Big),  \\
			{\dot U} =\beta \varphi(T),
		\end{array}
		\right.
	\end{equation}
	the big advantage being that the vector field $\tilde X^U$ has no zero on $Q$.  There are two particular orbits which are crucial for the study of the problem: the orbit $\gamma_1(\beta,c)$ starting at the point $(T_1(c),0)$  and arriving at a point $(T_i,U_1(\beta,c))$  and the orbit $\gamma_0(\beta,c)$ starting at the point $(T_0(c),0)$  and arriving at a point $(T_i,U_0(\beta,c)).$ Then, we deduce the phase portrait of $\tilde X$  from the one of $\tilde X^U$ by taking its image by the map $\Phi^{-1}.$ The case of the Chapman Jouguet velocity $\CJ$ is the subject of Subsection \ref{case_CJ}.
	
	In Section \ref{existence}, we study the nature of the trajectory in negative times of $\tilde X$,  passing through $(T_i,1 )$   in function of the parameter $(\beta,c)$. More specifically, we want to determine if this trajectory is a solution and identify its type. We first consider in Subsection \ref{limite_cases} the limit cases at fixed $c\geqslant \CJ$, namely $\beta \to 0$ and $\beta \to \infty$. Without resorting to the technicalities of the theory of slow-fast systems, we are satisfied with a basic proof: the orbit through the point $(T_i,1)$ is a solution with a bump. For $\beta=+\infty$, there is a vertical vector field $\tilde X^U_{\infty,c}$ without zeros. In Subsection \ref{rotating_property}, we prove a {\it rotating property} (see \cite{DU}) for the vector field  $\tilde X^U_{\beta,c}$, in function of the parameters $\beta$ and $c$.
	
	Section \ref{bifurcation_diagrams} is devoted to the bifurcation diagram in the $(\beta,c)$ space and to the rigorous proofs of Theorems \ref{theorem_intro1}, \ref{prop-global_intro2} and \ref{theorem_intro3} which can be inferred from the previous sections.
	The bifurcation curves $\beta=\beta_1(c)$ and
	$\beta=\beta_0(c)$ are simple curves for $c>\CJ$.  In a small neighborhood of the point $(\beta^{\mathit{cj}},\CJ)$, the union of the graphs of $\beta_0(c)$ and $\beta_1(c)$ is also a simple curve, to be denoted by $(B)$. 
	
	The behavior as $c$ and $\beta$ tend to $+\infty$ is trickier and is the subject of Subsection \ref{large}. As little is known about the behavior of the function 
	$\beta_0(c)$ in the large, we introduce another technical hypothesis $(H_\infty)$ about the function $\varphi$ for large $T$, which is verified by the two examples: the function $\psi(V)=\varphi(\frac{1}{V})$ is smooth at $V=0$ with a value $\psi(0)>0$. We prove that  $\beta_0(c)\rightarrow +\infty$ if $c\rightarrow +\infty$, see Proposition \ref{prop-c-infty}.
	The lengthy proof is divided in several steps. In particular, we use the Fenichel theory in the  specific case of two-dimensional smooth slow-fast systems which shows existence of center manifolds  (see \cite{DDMR}), and compute an expansion of a center manifold explicitly.
	
	Finally, to confirm our theoretical results, we performed numerical computations for the bifurcation diagram in Section \ref{numerics}, starting from the vector field:
	\begin{equation}\label{eq-tildeXU_bis}
		\tilde X^U := 
		\left\{
		\begin{array}{l}
			{\dot T} = -cT+\frac{1}{2}T^2+q_0\Big(1-(1-\alpha)^{\frac{1}{1-\alpha}}U^{\frac{1}{1-\alpha}}\Big),  \\
			{\dot U} =\beta\varphi(T).
		\end{array}
		\right.
	\end{equation}
	Two methods are implemented: an analytical method for the Heaviside function in the special case $\alpha = 0.5$ (see Appendix \ref{appendixA}) and a shooting method for the Arrhenius law (see Appendix \ref{AppendixB}). 
	In particular, our numerical results show an almost complete overlap of the two curves produced by the two methods for the Heaviside function. It is also worth noting a consistent behavior of the bifurcation curves $\beta_0(c)$ and $\beta_1(c)$ across different modeling scenarios (Heaviside and Arrhenius). This similarity suggests that the underlying dynamics of the system is robust to variations in the functional form of the reaction rate.  Finally, we note that numerically we have observed a single turning point $(\bar{\beta},\bar{c})$. In future  research, it  is  expected  that  the  global  uniqueness  of  the turning point will be proved.

	\section{Phase portraits}\label{phase_portraits}
	
	We consider the vector field
	\begin{equation}\label{eq-tau_bis}
		\tilde X :=
		\left\{
		\begin{array}{l}
			{\dot T} = -cT+\frac{1}{2}T^2+q_0(1-Z),  \\[2mm]
			{\dot Z} =\beta \varphi(T) Z^\alpha
		\end{array}
		\right.
	\end{equation}
	on $Q=\{(T,Z)\in \R^2\  |\  T \geqslant T_i, Z\geqslant 0\}$ and write $P_c(T)=-cT+\frac{1}{2}T^2+q_0.$
	We introduce the homocline
	$h_c := \{Z=\frac{1}{q_0}P_c(T)\}.$ Along this curve the field is vertical. This line will be used in order to obtain some differentiable properties of the flow of $\tilde X.$  
	
	For convenience, we recall $c_*= \frac{T_i^2+2q_0}{2T_i}$ (see \eqref{c_star}).
	If $c<c_*$, $h_c$ has two connected components $h_c^l$ and $h_c^r$ which are two arcs of parabola : $h_c^l$ from $(T_1(c),0)$ to $(T_i,Z_i(c))$ and $h_c^r$ going from $(T_0(c),0)$ to infinity.
	If $c>c_*$ we have just one connected component $h_c^r.$ 
	
	In order to simplify the arguments  about the phase portrait of $\tilde X$, in Section \ref{tilde_U}  we introduce the new variable 
	$U=\frac{1}{1-\alpha}Z^{1-\alpha}$ and make the change of coordinates:
	$$\Phi := (T,Z)\rightarrow (T,U).$$
	
	If $c<c_*$,  $Q\setminus h_c$ has three connected components: the central region $C_c$ above $h_c$, the left region $L_c$ below $h_c^l$ and the right region $R_c$ below $h_c^r.$ For $c\geqslant c_*$, we have just the two regions $C_c$ and $R_c$.
	See Figure \ref{fig-field-X-U} in $(T,U)$-coordinates or Figure \ref{fig-field-X-Z} in $(T,Z)$-coordinates.
	The boundary of $C_c$ is equal to $\partial C_c=h_c\cup [T_1(c),T_0(c)]\times \{0\}.$

	\subsection{Phase portrait of $\tilde X^U$}\label{tilde_U}
	We  introduce the new variable 
	$U=\frac{1}{1-\alpha}Z^{1-\alpha}$ and make the change of coordinates :
	$$\Phi := (T,Z)\rightarrow (T,U).$$
	In coordinates $(T,U)$, the field $\tilde X$  becomes $\tilde X^U=\Phi_*(\tilde X)$ given by:
	
	\begin{equation}\label{eq-tildeXU}
		\tilde X^U := 
		\left\{
		\begin{array}{l}
			{\dot T} = -cT+\frac{1}{2}T^2+q_0\Big(1-(1-\alpha)^{\frac{1}{1-\alpha}}U^{\frac{1}{1-\alpha}}\Big),  \\[0.5mm]
			{\dot U} =\beta \varphi(T).
		\end{array}
		\right.
	\end{equation}
	Note that $\tilde X^U$ has  no  more zero, in fact it has a non-zero vertical component and in particular is transverse to the axis $\{U=0\}$). Moreover, it is more differentiable than $\tilde X$, which results from the fact that the change of coordinates is itself not differentiable along $\{Z=0\}.$ The non-differentiability is translated into the first line, but  $\tilde X^U$  is at least of class $\big[\frac{1}{1-\alpha}\big]\geqslant 1.$ 
	Then,  we can apply the  Cauchy theorem to $\tilde X^U$ on the whole closed domain
	$Q.$ This implies the {\it uniqueness of trajectories} through  each point of $Q$. We shall show that this is not the case for $\tilde X.$ 
	
	The vertical lines are preserved by the mapping $\Phi := (T,Z)\mapsto (T,U)$ which is a smooth diffeomorphism on  $Q^+=\{T\geqslant T_i, \  Z>0\}.$ All the differentiable properties verified on $Q^+$ are preserved, for instance  $X^U$ has also a quadratic contact with $\{T=T_i\}$ at 
	$(T_i,U_i(c))$  where  $(1-\alpha)^{\frac{1}{1-\alpha}}U_i(c)^{\frac{1}{1-\alpha}}=\frac{1}{q_0}P_c(T_i).$  Also the homocline of $\tilde X$ is sent on the homocline of $\tilde X^U.$
	We keep the same names as before for the homocline, its connected components and the connected components of $Q\setminus h_c.$   
	The homocline $h_c$  is now given by $\{(1-\alpha)^{\frac{1}{1-\alpha}}U^{\frac{1}{1-\alpha}}=\frac{1}{q_0}P_c(T)\}.$  
	At the points $(T_0(c),0)$ and $(T_1(c),0)$ this homocline has a contact of order $\frac{1}{1-\alpha}$ with the vertical, which is the direction of $\tilde X^U$ at these points. Elsewhere, $h_c$ remains transverse to the vertical direction.
	\par
	As the vector field $\tilde X^U$ has no zero on $Q$, it is easy to describe its phase portrait on this domain, using the qualitative ideas of Poincar\'e (see \cite{P,B}; more recent references include \cite{L, DLA, PDM}). Nevertheless, minor  difficulties arise from the non-compactness of $Q$.  We proceed by giving a series of claims: 
	
	\begin{claim}\label{claim(i)} Each orbit $\gamma$ (curve image of trajectory) is a graph 
		$U\mapsto\tilde T_\gamma(U)$ above some interval in $U$.
	\end{claim}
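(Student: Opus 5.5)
The claim follows from the fact that on $Q$ the vertical component of $\tilde X^U$ is $\dot U=\beta\varphi(T)$, and $\varphi(T)>0$ for every $T\geqslant T_i$ (both for the Heaviside function and for the simplified Arrhenius law; in general by the standing hypothesis that $\varphi$ is positive on $[T_i,+\infty)$). Hence $\dot U>0$ along every trajectory as long as it stays in $Q$, and $U$ is a strictly increasing function of the time $\tau$ along each trajectory.

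Concretely, let $\tau\mapsto (T(\tau),U(\tau))$ be a maximal trajectory of $\tilde X^U$ restricted to $Q$, defined on an interval $J$ of times. Existence and uniqueness of trajectories hold because $\tilde X^U$ is at least $C^1$ on the closed domain $Q$, as noted above. Since $\dot U=\beta\varphi(T(\tau))>0$ on $J$, the map $\tau\mapsto U(\tau)$ is a strictly increasing $C^1$ function with nonvanishing derivative. It is therefore a $C^1$ diffeomorphism from $J$ onto an interval $I=U(J)$. Let $\tau(U)$ denote its inverse. Then $\tilde T_\gamma(U):=T(\tau(U))$ is defined on $I$, and the orbit $\gamma$ is exactly the graph $\{(\tilde T_\gamma(U),U) : U\in I\}$. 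Moreover $\tilde T_\gamma$ is $C^1$ and satisfies the nonautonomous scalar equation
\[
\frac{d\tilde T_\gamma}{dU}=\frac{-c\tilde T_\gamma+\frac12\tilde T_\gamma^2+q_0\big(1-(1-\alpha)^{\frac{1}{1-\alpha}}U^{\frac{1}{1-\alpha}}\big)}{\beta\varphi(\tilde T_\gamma)} .
\]

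The only point needing care is the boundary of $Q$ and the non-compactness. On the line $\{T=T_i\}$, $\varphi$ is still positive: $\varphi(T_i)=1$ for the Heaviside case and $e^{-T_a/T_i}$ for Arrhenius, since $H$ is taken with $H(0)=1$ by the convention $\varphi>0$ for $T\geqslant T_i$. On $\{U=0\}$ the field is transverse, pointing upward. So trajectories may leave $Q$ through $\{T=T_i\}$ or enter through $\{U=0\}$, but at no point of $Q$ does the orbit turn back in $U$. Blow-up of $T$ in finite time only shortens the interval $I$ and does not affect the graph property. I expect no real obstacle: the monotonicity argument is the whole proof, and I would state in the write-up that $I$ may be half-open or bounded depending on whether $\gamma$ meets $\partial Q$ or escapes to $T=+\infty$.
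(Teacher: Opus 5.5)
Your proposal is correct and uses the same argument as the paper. The paper simply notes that $\tilde X^U$ has a nonvanishing vertical component $\dot U=\beta\varphi(T)>0$ on $Q$, so $U$ is strictly monotone along each trajectory and every orbit is a graph $U\mapsto\tilde T_\gamma(U)$ over an interval; your inversion of $\tau\mapsto U(\tau)$ and your remarks on $\partial Q$ and escape to $T=+\infty$ just spell out details the paper leaves implicit.
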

	\begin{claim}\label{claim(ii)}
		Each orbit $\gamma$ has   at most    one  intersection point 
		$(T_\gamma,U_\gamma,)$  with $\partial C_c.$ When 
		$U_\gamma>0,$ i.e. when  $(T_\gamma,U_\gamma,)\not\in [T_1(c),T_0(c)]\times \{0\},$
		the function $\tilde T_\gamma(U)$ has a maximum at $U_\gamma$. 
	\end{claim}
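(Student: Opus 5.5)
The plan is a sign analysis of $\dot T$ across $\partial C_c$, combined with Claim \ref{claim(i)}. Write $F(T,U)=-cT+\frac12T^2+q_0\big(1-(1-\alpha)^{\frac{1}{1-\alpha}}U^{\frac{1}{1-\alpha}}\big)$ for the first component of $\tilde X^U$. By definition, $h_c=\{F=0\}\cap Q$. The central region $C_c$, lying above $h_c$, is $\{F<0\}$, since $F$ is strictly decreasing in $U$ for $U>0$. The regions $L_c$ and $R_c$ are $\{F>0\}$.

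\textbf{Step 1 (sign of $dT/dU$ along orbits).} On $Q$ we have $\dot U=\beta\varphi(T)>0$, so by Claim \ref{claim(i)} every orbit is parametrized by $U$. Along an orbit,
$$\frac{d\tilde T_\gamma}{dU}=\frac{F(\tilde T_\gamma(U),U)}{\beta\varphi(\tilde T_\gamma(U))}.$$
This derivative is negative inside $C_c$, positive inside $L_c\cup R_c$, and zero exactly on $h_c$.

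\textbf{Step 2 (forward invariance of $C_c$).} The boundary of $C_c$ has two kinds of pieces.
\begin{itemize}
\item On the segment $[T_1(c),T_0(c)]\times\{0\}$ the field points upward, into $C_c$.
\item On $h_c$ with $U>0$, the field is vertical, $(0,\beta\varphi)$. Since $F$ decreases in $U$, the quantity $\frac{d}{d\tau}F=\partial_U F\cdot\beta\varphi<0$ at such points. Hence $F$ changes sign from positive to negative along the orbit, which therefore crosses $h_c$ transversally from $L_c\cup R_c$ into $C_c$.
\end{itemize}
At the endpoints $(T_0(c),0)$ and $(T_1(c),0)$ the same conclusion holds: $F$ is nonincreasing along the orbit, and strictly decreasing as soon as $U>0$.

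So every orbit entering $C_c$ stays there for all later $U$ while it remains in $Q$. An orbit inside $C_c$ can only leave $Q$ through the line $\{T=T_i\}$. Before reaching $\partial C_c$, an orbit lies in $L_c\cup R_c$ (or starts on the axis $U=0$). Consequently each orbit meets $\partial C_c$ at most once: after the first contact it lies in the open set $\{F<0\}$ and never returns to $\{F=0\}$ or to $\{U=0\}$.

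\textbf{Step 3 (the maximum).} Suppose the unique intersection point has $U_\gamma>0$. For $U<U_\gamma$ close to $U_\gamma$ the orbit lies in $\{F>0\}$, so $\tilde T_\gamma$ is increasing there. For $U>U_\gamma$ it lies in $C_c$, so $\tilde T_\gamma$ is decreasing. Therefore $\tilde T_\gamma$ has a strict maximum at $U_\gamma$: a local maximum by this monotonicity, and a global one on the orbit's $U$-interval, since monotonicity holds on each side up to the interval ends.

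\textbf{Main obstacle.} The delicate point is the transversal crossing at the contact points of $h_c$ where regularity is weak: the points $(T_0(c),0)$ and $(T_1(c),0)$, where $\partial_UF$ vanishes at $U=0$, and the quadratic contact point $(T_i,U_i(c))$ for $c\le c_*$. At $U=0$ I would avoid derivatives of $F$ and argue directly from monotonicity of $F$ in $U$ together with $\dot U>0$. At $(T_i,U_i(c))$ the orbit through it lies on the boundary of $Q$ and exits, so it contributes at most one intersection point anyway.
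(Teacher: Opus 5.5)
Your proposal is correct and follows the paper's own argument. The paper also gets ``at most one intersection'' from the fact that $\tilde X^U$ points upward, i.e.\ into $C_c$, along $\partial C_c$, and gets the maximum from the sign change of the slope $d\tilde T_\gamma/dU$ at $U_\gamma$; your Step~2 is a more careful version of the paper's one line.

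The only loose spot is at the endpoints $(T_0(c),0)$ and $(T_1(c),0)$. There, ``$F$ is nonincreasing along the orbit'' is not true globally, and $\dot F=0$ at those points when $\alpha>0$, so it does not by itself rule out $F>0$ just after the start. A cleaner way to close this: along any trajectory, $\dot F=P_c'(T)F+\beta\varphi(T)\,\partial_UF$ is a linear equation whose forcing term is negative once $U>0$. By variation of constants, $F(\tau_0)\le 0$ then forces $F(\tau)<0$ for all $\tau>\tau_0$, which handles $h_c$, the segment and both endpoints at once.
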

	\begin{claimproof} The first claim  comes from  the fact  that $\tilde X^u$ has a non-zero vertical direction and then each orbit $\gamma$ is a graph 
		$U\mapsto \tilde T_\gamma(U)$ above some interval in $U$.  Next,  as   $\tilde X^u$   points upward along $\partial C_c$ each orbit $\gamma$  cuts $\partial C_c$ in at most one point. At a point
		$(T_\gamma, U_\gamma)$ of $\gamma$ where $\tilde X^U$ is vertical, the function 
		$\tilde T_\gamma$ has a zero-derivative at $U_\gamma.$ Along $\gamma,$ the vector field $\tilde X^U$ has a positive slope if $U<U_\gamma$ and a negative slope if $U>U_\gamma:$ the function $\tilde T_\gamma$ has a maximum at $U_\gamma.$ 
	\end{claimproof}
	
	\begin{claim}\label{claim(iii)} Let be $U_0>U_i(c).$ The trajectory through the point $(T_i,U_0)$ arrives in a finite negative time on    $\partial C_c=h_c\cup[T_1(c),T_0(c)]\times \{0\}$.
	\end{claim}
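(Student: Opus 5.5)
The plan is to show that the backward orbit through $(T_i,U_0)$ stays in the bounded set $\overline{C_c}\cap\{U\leqslant U_0\}$, on which $\dot U$ is bounded below by a positive constant. Then $U$ must fall from $U_0$ to $0$ in a bounded backward time unless the orbit meets $\partial C_c$ earlier, and in both cases the arrival point lies on $\partial C_c$.

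\textbf{Starting point and sign of the field.} First I locate the starting point. Set $Z_0=\big((1-\alpha)U_0\big)^{\frac{1}{1-\alpha}}$, so $Z_0>Z_i(c)=\frac{1}{q_0}P_c(T_i)$ since $U_0>U_i(c)$. Then at $(T_i,U_0)$,
$$\dot T=P_c(T_i)-q_0Z_0<0 .$$
Hence the point lies strictly above the homocline, i.e. in the central region $C_c$. By definition of $h_c$ and $C_c$, on $C_c$ we have $\dot T<0$. Also $\dot U=\beta\varphi(T)>0$ there, because $T\geqslant T_i$ and $\varphi$ is positive on $[T_i,+\infty)$. Consequently, following the trajectory in negative time while it remains in $C_c$, $T$ strictly increases and $U$ strictly decreases. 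In particular the orbit can never come back to the line $\{T=T_i\}$ in negative time, so it cannot leave $Q$ through its left side.

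\textbf{A priori bound on $T$.} Next I bound $T$ along this backward piece of orbit. As long as it stays in $C_c$ we have $U\leqslant U_0$, hence $Z\leqslant Z_0$. Being above $h_c$ means $q_0Z> P_c(T)$, so $P_c(T)<q_0Z_0$. Since $P_c(T)\to+\infty$ as $T\to+\infty$, this gives $T\leqslant T_{\max}$, where $T_{\max}$ is the largest root of $P_c(T)=q_0Z_0$. So the backward semi-orbit, as long as it stays in $C_c$, lies in the compact set $K=\overline{C_c}\cap\{T_i\leqslant T\leqslant T_{\max},\ 0\leqslant U\leqslant U_0\}$. In particular the solution cannot blow up, and it is defined for as long as it stays in $C_c$ (Cauchy theorem for $\tilde X^U$, which is at least $C^1$ on $Q$).

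\textbf{Finite exit time.} Finally, let $m=\min_{[T_i,T_{\max}]}\varphi>0$. This minimum exists because $\varphi$ is continuous and positive on $[T_i,+\infty)$; in both examples it is explicit. On $K$ we have $\dot U\geqslant\beta m$, so along the backward orbit $U(\tau)\leqslant U_0+\beta m\,\tau$ for $\tau\leqslant 0$. Hence the orbit cannot remain in the open region $C_c$ for a backward time longer than $U_0/(\beta m)$. Since it cannot exit through $\{T=T_i\}$ and is confined to $K$, it must reach $\partial C_c=h_c\cup[T_1(c),T_0(c)]\times\{0\}$ within that time. This covers both ways of exiting: hitting $h_c$ with $U>0$, or reaching $U=0$ at a point which is then in $\overline{C_c}\cap\{U=0\}=[T_1(c),T_0(c)]\times\{0\}$.

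For $c\geqslant c_*$ the same argument applies verbatim: $C_c$ is then bounded on the left by $\{T=T_i\}$ and on the right by $h_c^r$, and only the exit through $\{T=T_i\}$ has to be excluded, which the monotonicity of $T$ does. The only delicate point is this confinement to a compact set. Once the bound $T\leqslant T_{\max}$ and the sign of $\dot T$ are established, the positive lower bound on $\dot U$ gives the finite time immediately.
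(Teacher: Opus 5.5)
Your argument is correct, and it differs from the paper's proof at one step. The paper also confines the backward orbit to the compact set $B=\{0\leqslant U\leqslant U_0\}\cap C_c$. It then invokes the Poincar\'e--Bendixson theorem (since $\tilde X^U$ has no zeros in $B$) to conclude that the orbit leaves $B$ in finite negative time. It uses the strict decrease of $U$ only to rule out an exit through the top edge $\{U=U_0\}$.

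You replace Poincar\'e--Bendixson with the direct estimate $\dot U\geqslant \beta m>0$ on the compact set. This gives an explicit bound $U_0/(\beta m)$ on the exit time. In the Heaviside case, where $m=1$, the bound is exactly the time needed to reach $U=0$, consistent with the $\tau_Z$ quoted later in the paper. In effect you make explicit why no recurrence is possible: $U$ is strictly monotone along orbits with speed bounded below on compacts, which is also what makes the Poincar\'e--Bendixson step trivial there.

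You also supply two points the paper leaves implicit.
\begin{itemize}
\item You prove that $B$ is actually bounded, via $T\leqslant T_{\max}$, the largest root of $P_c(T)=q_0Z_0$.
\item You exclude an exit through the left edge $\{T=T_i\}$, using $\dot T<0$ in $C_c$. This is needed to place the exit point on $h_c\cup[T_1(c),T_0(c)]\times\{0\}$ rather than merely on $\partial B$.
\end{itemize}

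The paper's version is shorter and matches the qualitative Poincar\'e-style arguments of the neighbouring claims; yours is self-contained and quantitative.

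One cosmetic point on your final paragraph: for $c>c_*$ (where $U_i(c)$ is not defined), the bottom part of $\partial C_c$ inside $Q$ is $(T_i,T_0(c)]\times\{0\}$, not $[T_1(c),T_0(c)]\times\{0\}$. Your argument covers it unchanged.
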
 
	\begin{claimproof}
		We look at the  trajectory $\varphi(\tau)$  of $\tilde X^U$ through the point $(T_i,U_0),$ in negative times.  This trajectory remains in the strip $S=\{0\leqslant U\leqslant U_0\}$. As $B=S\cap C_c$  is  compact, it follows from the Poincar\'e-Bendixson theorem (see, e.g., \cite{P,B}) that   
		$\varphi(\tau)$ arrives in a finite  negative time $\tau_0$ at a point $m_0$ in  boundary $\partial B$ of $B.$ This time $\tau_0$ is strictly negative because $U_0>U_i(c)$ and then  the $U$-coordinate of $m_0$ is strictly smaller than $U_0$ (the coordinate $U$ is strictly discreasing along the trajectory). Then $m_0$ belongs to the part of $\partial B$ contained in $\partial C_c$.$\square$
	\end{claimproof}
	
	\begin{claim}\label{claim(iv)} Conversely, the   trajectory of $X^U$ by any point of  $\partial C_c$  is contained in $C_c$: it goes upward and toward the left and arrives on $\{T_i\}\times [Ui(c),+\infty)$ in a finite positive time.
	\end{claim}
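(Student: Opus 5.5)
The plan is to run the argument of Claim \ref{claim(iii)} in positive time rather than negative time. Take a point $m\in\partial C_c$. By Claim \ref{claim(ii)}, $\tilde X^U$ points upward along $\partial C_c$: it is vertical on $h_c$ and has $\dot U=\beta\varphi(T)>0$ on the segment $[T_1(c),T_0(c)]\times\{0\}$. Hence for small $\tau>0$ the trajectory through $m$ enters $C_c$. The trajectory cannot return to $\partial C_c$, since by Claim \ref{claim(ii)} each orbit meets $\partial C_c$ at most once. So for $\tau>0$ it stays in $C_c$ until it leaves $Q$, and $Q$ can only be left through the vertical line $\{T=T_i\}$.

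Inside $C_c$, i.e. above the homocline, we have $\dot T=P_c(T)-q_0(1-\alpha)^{\frac{1}{1-\alpha}}U^{\frac{1}{1-\alpha}}<0$. We also have $\dot U=\beta\varphi(T)>0$, with $\varphi\geqslant \min_{[T_i,T_m]}\varphi>0$ on the relevant range, where $T_m=\tilde T_\gamma(U_\gamma)$ is the maximum from Claim \ref{claim(ii)}. The trajectory therefore moves up and to the left, and stays in the strip $T_i\leqslant T\leqslant T_m$. This gives the qualitative description in the statement.

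For finiteness of the time, I use $\dot U\geqslant \beta\varphi_{\min}>0$, so $U\to+\infty$ linearly while the trajectory remains in $Q$. For $U$ large, $\dot T\leqslant \max_{[T_i,T_m]}P_c - q_0(1-\alpha)^{\frac{1}{1-\alpha}}U^{\frac{1}{1-\alpha}}$, which becomes less than some $-\delta<0$. Consequently $T$ decreases at a definite rate after a finite time, and it must reach $T_i$ in finite positive time. Equivalently, the compact set $C_c\cap\{U\leqslant U^*\}\cap\{T\leqslant T_m\}$ contains no zero of $\tilde X^U$. By Poincar\'e--Bendixson, or more simply because $U$ is strictly monotone so no recurrence is possible, the trajectory exits this set in finite time, and it can only exit through $\{T=T_i\}$ or through the top side, after which the estimate above applies.

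Finally, the exit point $(T_i,U)$ must satisfy $U\geqslant U_i(c)$. Below $U_i(c)$ (when $c<c_*$), points of $\{T_i\}\times[0,U_i(c))$ lie in $L_c$, not in $C_c$, and there the field points rightward into $Q$. At $U=U_i(c)$ the contact with the vertical is tangential. When $c\geqslant c_*$, the whole line $\{T=T_i\}$ is in $\overline{C_c}$, and the same argument applies with the convention $U_i(c)=0$. The main obstacle is the non-compactness in $U$, and the uniform lower bound on $\dot U$ together with the growth of the $U^{\frac{1}{1-\alpha}}$ term is what handles it.
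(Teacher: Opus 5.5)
Your proposal is correct and follows essentially the same route as the paper. The steps match: the trajectory enters $C_c$ because the field points upward along $\partial C_c$, never returns by Claim \ref{claim(ii)}, leaves a compact truncation of $C_c$ (by Poincar\'e--Bendixson or by monotonicity of $U$), and then a uniform bound $\dot T\leqslant -\delta$ above some height forces it to reach $\{T=T_i\}$ at a point with $U\geqslant U_i(c)$. The differences are cosmetic: the paper takes any $\bar U>U_i(c)$ and gets $\dot T\leqslant -A$ from compactness and the fact that $\dot T$ decreases in $U$, whereas you pick a large $U^*$ and use $\dot U\geqslant \beta\varphi_{\min}$; the paper's bound $\dot U\leqslant \beta B$, which rules out finite-time escape in $U$, is implicit in your argument because $\varphi$ is bounded on $[T_i,T_m]$.
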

	\begin{claimproof}
		Take any point $m\in \partial C_c.$ As any trajectory must enter into $C_r$ in positive times (for $m=T_1(c)$ or $T_0(c)$, it is convenient to use a tubular neighborhood), and cuts at most one time $\partial C_c$ by Claim \ref{claim(i)}, the (positive) trajectory of $m$ is entirely contained in $C_c.$ It is clear that a positive trajectory in $C_c$ goes upward and toward the left. 
		
		It remains to prove that this  trajectory  arrives on $\{T_i\}\times [Ui(c),+\infty)$ in a finite positive time. We proceed in two steps. First we choose an $\bar{U}>U_i(c)$ also greater than the $U$-coordinate of $m.$
		We call now $M$ the  compact part of $C_c$ below  the horizontal line $\{U=\bar{U}\}.$ Using the Poincar\'e-Bendixson theorem we have that the trajectory through  $m$ arrives in a finite positive time at a point  in 
		$\{T_i\}\times [U_i(c),\bar{U}]$ or in  $\{U=\bar{U}\}$. In the first case  we have finished. In the second case, we arrive at a point $p'=(\bar{T},\bar{U})\in C_c,$  with $\bar{T}>T_i.$
		
		We proceed to the second step of the proof,  considering the trajectory from $p'$, in the positive time.  This trajectory remains in the 
		vertical strip $S=\{U\geqslant \bar{U}, T\leqslant \bar{T}\}$ and it is clear that there exists a $A>0$ such that  the horizontal component of $\tilde X^U$ is less than $-A$ on $S$ (on the segment $\{ (T,\bar{U} | T\in [T_i,\bar{T}]\}$, this horizontal component is strictly negative and then less than $-A$ for some $A>0,$ by compactness; moreover this component is decreasing when $U$ increases). On the other side, the vertical component of $\tilde X^U$ is positive and {\it less than $\beta B$}, where $B={\rm Max}\{\varphi(T) | T\in [T_i,\bar{T}]\}>0.$
		%greater than $\beta B$ with  $B={\rm Inf} \{\varphi(T) | T\in [T_i,\bar{T}]\}.$ 
		 %It follows from the assumptions on %$\varphi(T)$ that $B>0.$  
		 From these two estimations it follows trivially that  the trajectory through $p'$ arrives on $\{T=T_i\}$ in a finite positive time.
	\end{claimproof}
	
	\begin{claim} Any trajectory starting at a point of $h_r\setminus \{(T_0(c),0)\}$ arrives in a finite negative time at a point $(T,0)$ with $T>T_0(c)$.
	\end{claim}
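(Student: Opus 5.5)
Fix a point $m=(T_m,U_m)\in h_c^r$ with $T_m>T_0(c)$, so $U_m>0$. The argument has two parts: first trap the negative trajectory from $m$ in a compact region, then show it must leave through the bottom edge at a point to the right of $T_0(c)$.

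\textbf{Compact region.} On $h_c^r$ the field $\tilde X^U$ is vertical and points upward. Along $h_c^r$, the function $U$ increases as $T$ increases, since $P_c$ is increasing for $T>c$. Just below $h_c^r$, inside $R_c$, we have $\dot T>0$. So in negative time the trajectory from $m$ enters $R_c$ and moves downward and to the left. Concretely, $T$ and $U$ both decrease along it for $\tau<0$, as long as it stays in $R_c$.

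I then consider the compact set
$$D=\{(T,U)\in R_c \cup h_c^r \;:\; T_0(c)\le T\le T_m,\ 0\le U\le U_m\}.$$
Its boundary consists of three pieces:
\begin{itemize}
\item the arc of $h_c^r$ from $(T_0(c),0)$ to $m$;
\item the segment $[T_0(c),T_m]\times\{0\}$;
\item the vertical segment $\{T_m\}\times[0,U_m]$.
\end{itemize}

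\textbf{Where the trajectory can exit.} In negative time the trajectory cannot cross the arc of $h_c^r$. Along that arc the field is vertical and upward, and by Claim \ref{claim(ii)} an orbit meets $\partial C_c$ at most once; it has already done so at $m$. Since $T$ decreases in backward time, the trajectory also cannot reach the vertical side $T=T_m$. On $D$ the vertical component satisfies $\dot U=\beta\varphi(T)\ge \beta\min_{[T_0(c),T_m]}\varphi>0$, so $U$ decreases at a rate bounded below in backward time. Hence the trajectory reaches $\{U=0\}$ in finite negative time, at a point $(T^*,0)$ with $T_0(c)\le T^*<T_m$. One can phrase this with Poincar\'e--Bendixson as in Claim \ref{claim(iii)}, but the monotonicity bound already gives it directly.

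\textbf{Main obstacle: excluding $T^*=T_0(c)$.} This is the step requiring care. The point $(T_0(c),0)$ lies on $h_c^r$, and $h_c^r$ has a contact of order $\frac{1}{1-\alpha}$ with the vertical there. By Claim \ref{claim(iv)}, the forward trajectory from $(T_0(c),0)$ is the orbit $\gamma_0$, which lies in $C_c$ for positive time. Uniqueness of trajectories of $\tilde X^U$ (Cauchy theorem, since $\tilde X^U$ is at least $C^1$) then applies. If our backward orbit ended at $(T_0(c),0)$, its forward continuation would coincide with $\gamma_0$, which lies in $C_c$. That is a contradiction, because our orbit lies in $R_c$ until it reaches $m\in h_c^r$. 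Therefore $T^*>T_0(c)$, as claimed.
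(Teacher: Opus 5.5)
Your proof is correct and follows the paper's argument. You trap the backward trajectory in a compact part of $R_c$, conclude that it reaches $\{U=0\}$ in finite negative time at some $(T,0)$ with $T\geqslant T_0(c)$, and rule out $T=T_0(c)$ because, by Claim~\ref{claim(iv)} and uniqueness for $\tilde X^U$, the trajectory through $(T_0(c),0)$ lies in $C_c$ for positive times. The only difference is that the paper invokes Poincar\'e--Bendixson at the exit step, while you use the explicit bound $\dot U\geqslant \beta\min_{[T_0(c),T_m]}\varphi>0$, which gives the finite-time exit directly.
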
 
	
	\begin{claimproof} As such a trajectory remains in a compact subset of $R_c$, we can again apply the Poincar\'e-Bendixson theorem to prove it arrives in a finite negative $-\tau_0<0$  at a point $(T,0)$ with $T\geqslant T_0(c)$. Of course we have that $T>T_0(c)$ because if $T=T_0(c)$ the trajectory would  be, as consequence of (4),  in the interior of $C_c$ for any time $-\tau,$ with $-\tau_0<-\tau<0,$ that  is impossible. 
	\end{claimproof}

	It is now very easy to draw the phase portrait of $\tilde X^U$, see Figure \ref {fig-field-X-U}.
	It is important to notice that  the vector field is transverse to the horizontal axis 
	$\{U=0\},$ tilted to the left along the interval $]T_1(c),T_0(c)[$  (i.e. has a negative slope) and tilted to the right outside (i.e. has a positive slope). There are two particular orbits : the orbit $\gamma_1(\beta,c)$ starting at the point $(T_1(c),0)$  and arriving at a point $(T_i,U_1(\beta,c))$  and the orbit $\gamma_0(\beta,c)$ starting at the point $(T_0(c),0)$  and arriving at a point $(T_i,U_0(\beta,c)).$ These orbits are crucial for proving the results of Section 3.  At $(T_0(c),0)$ and $(T_1(c),0)$ these orbits have a vertical tangent, but it is possible to prove more by a direct computation:
	
	\begin{lemma}\label{lem-transverse}
		Let be $i=0$ or $1$. There are  constants $k_i>0,$  depending on the  parameter $(\beta,c)$, such that :
		putting $T=T_i(c)+x$, the orbit $\gamma_i(\beta,c)$ has the asymptotic 
		$x\sim -k_i U^\frac{2-\alpha}{1-\alpha}.$  
	\end{lemma}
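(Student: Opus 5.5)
Since the orbit $\gamma_i(\beta,c)$ is, by Claim \ref{claim(i)}, a graph $U\mapsto \tilde T_{\gamma_i}(U)$ near $U=0$, I will study the scalar equation satisfied by $x(U)=\tilde T_{\gamma_i}(U)-T_i(c)$. Dividing the two components of $\tilde X^U$ gives
\begin{equation*}
\frac{dx}{dU}=\frac{P_c(T_i(c)+x)-q_0(1-\alpha)^{\frac{1}{1-\alpha}}U^{\frac{1}{1-\alpha}}}{\beta\varphi(T_i(c)+x)},\qquad x(0)=0 .
\end{equation*}
For $i=1$ this uses $T_1(c)\geqslant T_i$, i.e.\ $c\leqslant c_*$, so that $\varphi(T_1(c))>0$. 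The right-hand side is of class $\big[\frac{1}{1-\alpha}\big]\geqslant 1$, so the solution is unique. Since $P_c(T_i(c))=0$, write $P_c(T_i(c)+x)=P_c'(T_i(c))x+\frac12x^2$, with $P_c'(T_0(c))=\sqrt{c^2-2q_0}>0$ and $P_c'(T_1(c))=-\sqrt{c^2-2q_0}<0$. The equation is then a linear inhomogeneous ODE plus quadratic and Lipschitz corrections:
\begin{equation*}
x'=a_i x-bU^{\frac{1}{1-\alpha}}+O(x^2)+O(|x|\,U^{\frac{1}{1-\alpha}}),
\end{equation*}
where $a_i=P_c'(T_i(c))/(\beta\varphi(T_i(c)))$ and $b=q_0(1-\alpha)^{\frac{1}{1-\alpha}}/(\beta\varphi(T_i(c)))>0$.

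First I will get an a priori bound. Since $|x'|\leqslant C(|x|+U^{\frac{1}{1-\alpha}})$ and $x(0)=0$, Gronwall gives $|x(U)|\leqslant C'U^{\frac{1}{1-\alpha}+1}=C'U^{\frac{2-\alpha}{1-\alpha}}$. Then I will use variation of constants:
\begin{equation*}
x(U)=\int_0^U e^{a_i(U-s)}\big(-bs^{\frac{1}{1-\alpha}}+R(s)\big)\,ds .
\end{equation*}
By the a priori bound, $R(s)=O(s^{\frac{2-\alpha}{1-\alpha}})$, which is of higher order than $s^{\frac{1}{1-\alpha}}$. Also $e^{a_i(U-s)}=1+O(U)$ on $[0,U]$. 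Hence
\begin{equation*}
x(U)=-b\,\frac{1-\alpha}{2-\alpha}\,U^{\frac{2-\alpha}{1-\alpha}}\big(1+O(U)\big),
\end{equation*}
that is, $x\sim -k_iU^{\frac{2-\alpha}{1-\alpha}}$ with
\begin{equation*}
k_i=\frac{(1-\alpha)\,q_0(1-\alpha)^{\frac{1}{1-\alpha}}}{(2-\alpha)\,\beta\varphi(T_i(c))}>0 .
\end{equation*}
The sign of $a_i$ does not matter at leading order. The constant $k_i$ depends on $(\beta,c)$ through $\beta\varphi(T_i(c))$.

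The main obstacle is justifying that the orbit through $(T_i(c),0)$ really is this graph solution, and that the error terms are uniformly controlled. In particular, for $i=0$ one should check that the relevant branch is the forward orbit in $U\geqslant 0$. This is handled by uniqueness for $\tilde X^U$ together with transversality to $\{U=0\}$, where $\dot U=\beta\varphi>0$. One also needs to confirm that $\varphi$ is smooth near $T_i(c)>T_i$, which holds except in the borderline case $T_1(c)=T_i$ ($c=c_*$). In that case $\varphi$ is still continuous from the right, and the same estimate goes through using only continuity of $\varphi$.
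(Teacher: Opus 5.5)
Your argument is correct and is exactly the ``direct computation'' that the paper mentions but does not write out. You reduce to the scalar equation for $x(U)$, use $P_c(T_i(c))=0$, and integrate the forcing term $-bU^{\frac{1}{1-\alpha}}$; this gives $k_i=\frac{1-\alpha}{2-\alpha}\,\frac{q_0(1-\alpha)^{\frac{1}{1-\alpha}}}{\beta\varphi(T_i(c))}$, consistent with the $Z^{2-\alpha}$ asymptotics the paper then derives via Lemma~\ref{lem-contact}.

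One minor correction: your closing remark on $c=c_*$ does not hold, because there $x<0$ immediately pushes the orbit into $\{T<T_i\}$, where $\varphi=0$, so right-continuity of $\varphi$ does not help. In that case, however, $\gamma_1$ degenerates to the point $(T_i,0)\in\partial Q$ and is never used, so for $i=1$ the lemma should simply be read for $c<c_*$.
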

	
	\begin{figure}[htp]
		\begin{center}
			\includegraphics[scale=0.7]{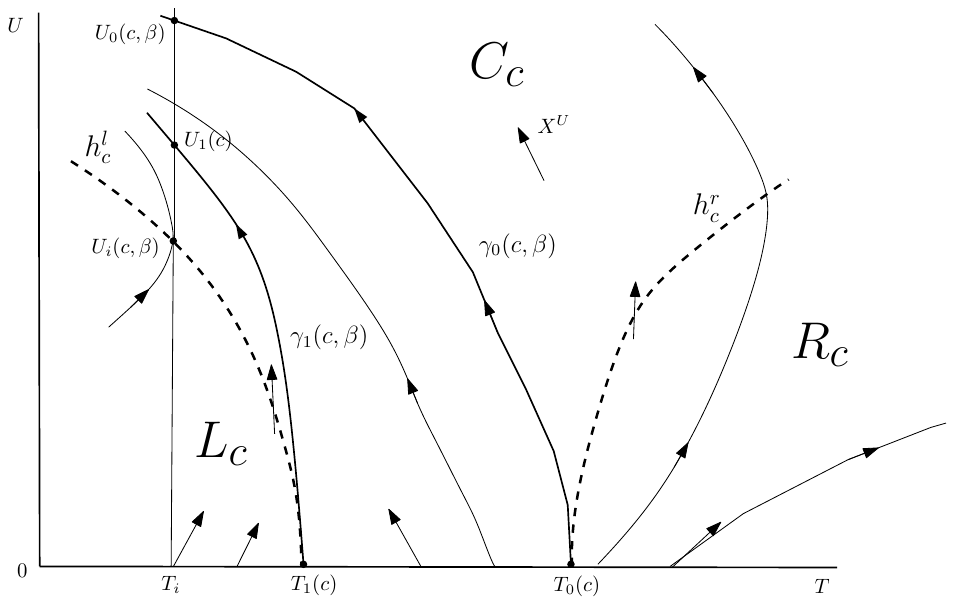}\\
			\caption{Phase portrait of $\tilde X^U$ for $c>\CJ$}\label{fig-field-X-U}
		\end{center}
	\end{figure}
	\subsection{Phase portrait of $\tilde X$}
	
	We now infer the phase portrait of $\tilde X$  from the one of $\tilde X^U,$ by taking its image by the map $\Phi^{-1}.$
	
	Outside $\{Z=0\},$ image of $\{U=0\},$
	the phase portraits are the same, up the diffeomorphism $\Phi^{-1}|_{Q^+}$ (which also preserves the time).  The only differences occur along the axis $\{Z=0\}.$ To obtain the asymptotics of the orbits along this axis  we will use the following elementary results:
	
	\begin{lemma}\label{lem-contact}
		(i) If a curve $\gamma$  has, at a point $(T_*,0),$ an asymptotic $T-T_*\sim kU^\delta$
		with $k\in \R -\{0\}$ and $\delta>0,$  then its image $\Phi^{-1}(\gamma)$ has an asymptotic $T-T_*\sim \frac{k}{1-\alpha}U^{\delta(1-\alpha)}.$\\
		(ii) As a consequence, for $\delta(1-\alpha)<1$ (for instance, for $\delta=1$), $\Phi^{-1}(\gamma)$ is tangent 
		to $\{Z=0\}$ with order $\frac{1}{\delta(1-\alpha)}$ on a side determined by the sign of $k$; for $\delta(1-\alpha)>1,$ $\Phi^{-1}(\gamma)$ is tangent to  the vertical with order $\delta(1-\alpha),$ on a side determined by the sign of $k.$
	\end{lemma}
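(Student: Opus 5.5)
The plan is a direct substitution, since $\Phi$ acts only on the second coordinate. Along $\gamma$ near $(T_*,0)$ we have $T-T_*\sim kU^\delta$ as $U\to0^+$. Since $U=\frac{1}{1-\alpha}Z^{1-\alpha}$, the image curve $\Phi^{-1}(\gamma)$ consists of the points $(T,Z)$ with $Z^{1-\alpha}=(1-\alpha)U$. This gives
$$T-T_*\sim k\,(1-\alpha)^{-\delta}\,Z^{\delta(1-\alpha)},\qquad Z\to0^+ .$$
For $\delta=1$ the constant is exactly $\frac{k}{1-\alpha}$, as written in the statement. In general I read the statement as saying that the new coefficient $k':=k(1-\alpha)^{-\delta}$ is a positive multiple of $k$, and that the new exponent is $\delta(1-\alpha)$ in the variable $Z$; the right-hand side $U^{\delta(1-\alpha)}$ as printed should be $Z^{\delta(1-\alpha)}$. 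Only the sign of $k'$ and the exponent are used later. Part (i) then follows because an asymptotic equivalence is preserved under composition with the monotone homeomorphism $Z\mapsto U$ of $[0,\infty)$.

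For (ii), set $\epsilon:=\delta(1-\alpha)$ and work from $T-T_*\sim k'Z^{\epsilon}$ with $k'\neq0$. The curve lies in the half-plane $\{T>T_*\}$ if $k>0$ and in $\{T<T_*\}$ if $k<0$; this fixes the side.

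In the case $\epsilon<1$, the function $Z\mapsto k'Z^\epsilon$ is strictly monotone near $0$, so the curve is locally a graph over $T$, given by $Z\sim\left(\frac{|T-T_*|}{|k'|}\right)^{1/\epsilon}$. Since $1/\epsilon>1$, we get $Z/|T-T_*|\to0$. Hence the curve is tangent to $\{Z=0\}$ at $(T_*,0)$, with contact order $1/\epsilon$.

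In the case $\epsilon>1$, we get $|T-T_*|/Z\sim|k'|Z^{\epsilon-1}\to0$. Hence the curve is tangent to the vertical line $\{T=T_*\}$, with contact order $\epsilon$.

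The only delicate point, and it is mild, is justifying the inversion of an asymptotic relation in the case $\epsilon<1$. This is done by writing $T-T_*=k'Z^\epsilon(1+o(1))$, taking $|\cdot|^{1/\epsilon}$, and noting that $(1+o(1))^{1/\epsilon}=1+o(1)$. No smoothness beyond continuity of the parametrization is needed.
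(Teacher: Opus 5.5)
Your direct substitution $U^\delta=(1-\alpha)^{-\delta}Z^{\delta(1-\alpha)}$ is correct and is the elementary argument the paper intends (the paper states the lemma without proof). You are also right that the printed formula should read $T-T_*\sim k(1-\alpha)^{-\delta}Z^{\delta(1-\alpha)}$, which reduces to $\frac{k}{1-\alpha}Z^{1-\alpha}$ only for $\delta=1$; this reading matches the paper's later use $T-T_j(c)\sim-\bar k_jZ^{2-\alpha}$ for the orbits $\gamma_j(\beta,c)$.

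One slip: for $\epsilon<1$, the claim that the curve is locally a graph over $T$ does not follow from the asymptotic relation alone, since the $o(1)$ term can destroy monotonicity. Nothing depends on it, however, because $|T-T_*|/Z\sim|k'|Z^{\epsilon-1}\to\infty$ and your inverted relation $Z\sim(|T-T_*|/|k'|)^{1/\epsilon}$ both hold along the curve regardless.
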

	
	The axis  $\{Z=0\}$ is invariant: it contains  two singular points $(T_1(c),0),$ $ (T_0(c),0)$ and three regular orbits. 
	The most remarkable fact, following from Lemma \ref{lem-contact},  is  that  other orbits arrive  tangentially on the axis $\{Z=0\}$ in negative time, with a contact of order $\frac{1}{1-\alpha},$ save the two orbits ``arriving'' to the singular points $(T_1(c),0)$ and $(T_0(c),0)$. The contact is on the left for an orbit arriving at $(\bar T,0)$ for 
	$T_1(c)<\bar T <T_0(c)$ and on the right for $\bar T<T_1(c)$ or $\bar T> T_0(c).$ 
	At  $(T_1(c),0)$ and $(T_0(c),0)$ the orbits arrive transversality to $\{Z=0\},$ tangent to the vertical direction and on the left. We write  these orbits : $\gamma_1(\beta,c)$
	and $\gamma_0(\beta,c)$, respectively.
	This follows from Lemmas \ref{lem-transverse} and \ref{lem-contact} implying that the asymptotics is $T-T_j(c)\sim -\bar{k}_jZ^{(2-\alpha)}$),  for a $\bar{k}_j>0,$  for $j=0$ or $1.$ 
	These trajectories cut transversely the verticals $\{T=T_i\}$ at the points 
	$(T_i,Z_1(\beta,c))$ and 
	$(T_i,Z_0(\beta,c))$, respectively.
	
	A consequence is that \textit{the vector field $\tilde X$ does not satisfy the Cauchy property of uniqueness of trajectory:}  at any point $(T,0),$ with $T\not =T_1(c),T_0(c)$ arrive two trajectories : one is horizontal and the other is the image by $\Phi^{-1}$ of the trajectory of $X^U$. Inversely, it is easy to see that, starting at any point $(T_i,Z),$ with $Z>Z_1(\beta,c)$ ,  and going in negative time,  we arrive at a point $(\bar{T},0)$ in the finite (fast) time 
	$\tau_Z$ (equal to $\frac{Z^{1-\alpha}}{(1-\alpha)\beta}$ in the Heaviside case) and next, if $Z\not =Z_0(\beta,c),$ we slide  an infinite time along $\{\Z=0\},$ towards the point $(T_0(c),0).$ 
	
	The phase portrait is given in Figure \ref{fig-field-X-Z}.
	\begin{figure}[htp]
		\begin{center}
			\includegraphics[scale=0.7]{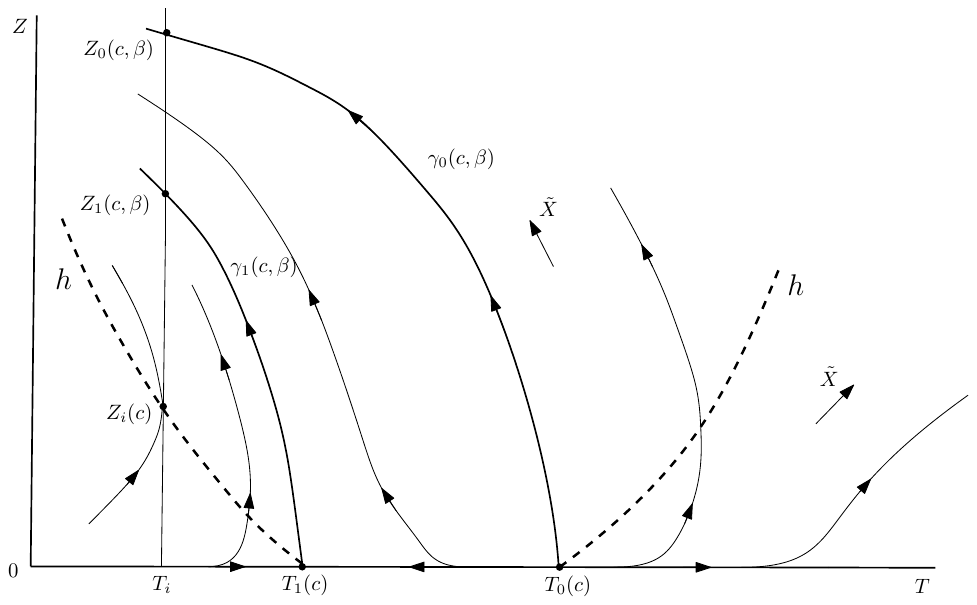}\\
			\caption{Phase portrait of $\tilde X$ for $c>\CJ$}\label{fig-field-X-Z}
		\end{center}
	\end{figure}
	%\eject
	The point $(T_0(c),0)$ is a degenerate repulsive node and the point $(T_1(c),0)$ is a degenerate saddle of $\tilde X.$ The basin (of repulsion) of $(T_0(c),0)$  is  at the right of the orbit $\gamma_1(\beta,c).$  For each $Z>Z_1(c)$ the trajectory through $(T_i,Z)$ tends toward $(T_0(c),0)$ for negative times. This time is finite if $Z=Z_0(\beta,c)$ and infinite otherwise. For $Z_1(\beta,c)<Z<Z_0(\beta,c)$ the trajectory arrives by the left side and is a monotonic graph $T(Z)$. For $Z>Z_0(\beta,c),$  the trajectory arrives by the rightarrow side after a unique bump (it has to cut the homoclinic branch $h^r_c$ only one time).
	The orbit $\gamma_0(\beta,c),$  that we say  of {\it finite time} type, separates the two types that we will call here {\it monotonic and bump orbit}, respectively, see Section \ref{main_section}.

	\begin{claim}\label{claim_solution} There is a solution if and only it holds:
		\begin{equation} \label{S}  T_1(c)\leqslant T_i\  \  {\rm or}\  \  Z_1(\beta,c)<1.
		\end{equation}
		This solution is of monotonic, finite time or bump type (see Section \ref{main_section}), respectively, depending on the position of $1:$ $Z_1(\beta,c) <1< Z_0(\beta,c),$ $Z_0(\beta,c)<1$ or $1=Z_0(\beta,c)$, respectively.
	\end{claim}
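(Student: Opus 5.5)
The plan is to reduce everything to the negative-time trajectory of $\tilde X$ through the point $(T_i,1)$ and read off its fate from the phase portrait just described. On the side $\xi>0$ the shock problem \eqref{shock_system} always has a solution: $-cT+T^2/2<0$ on $(0,2c)$ and $T_i<\CJ\le c$, so $T$ decreases from $T_i$ to $0$ as $\xi\to+\infty$. Hence a traveling wave exists if and only if the backward orbit of $\tilde X$ through $(T_i,1)$ stays in $Q$, with $T>T_i$ for $\xi<0$, and converges to $(T_0(c),0)$ or $(T_1(c),0)$ as $\tau\to-\infty$. Here "converges" includes reaching the point in finite time and then staying there. Continuity of $T'$ at $\xi=0$ is automatic, since $Z$ is continuous and equals $1$ there. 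Near $(T_i,1)$ the backward orbit enters $T>T_i$ exactly when $\dot T(T_i,1)=P_c(T_i)-q_0<0$. Since $T_i<\CJ\le c$, we have $-cT_i+T_i^2/2<0$, so this always holds.

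For sufficiency we pass to $(T,U)$ coordinates. First suppose $T_1(c)<T_i$, that is $c>c_*$. Then $C_c$ meets $\{T=T_i\}$ along the whole segment, and Claim~\ref{claim(iii)} (whose proof only uses $U_0$ above the contact point, or any $U_0>0$ when there is no contact) sends the backward trajectory through $(T_i,U(1))$ to $\partial C_c=h^r_c\cup[T_i,T_0(c)]\times\{0\}$ in finite time. If it hits $h^r_c$, Claim~2.5 carries it on to a point $(\bar T,0)$ with $\bar T>T_0(c)$. In every case it reaches $\{U=0\}$ at some $(\bar T,0)$ with $\bar T\ge T_i$ in finite time.

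Next, translating back with $\Phi^{-1}$, the orbit in $(T,Z)$ reaches $(\bar T,0)$. If $\bar T=T_0(c)$, we extend it by the constant state; this is the finite-time (special) type. Otherwise we continue along $\{Z=0\}$ using the horizontal solution of \eqref{beyond_reaction}. On this axis $\dot T=P_c(T)$ has $T_0(c)$ repelling in backward time... (more precisely $T_0$ is repelling forward, so attracting backward), hence the orbit slides to $T_0(c)$ as $\tau\to-\infty$ from either side. This is the choice of continuation selected by the non-uniqueness described above. In the borderline case $T_1(c)=T_i$ the same argument applies, with only the endpoint $(T_i,0)$ to check.

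Now suppose $T_1(c)\ge T_i$ and $Z_1(\beta,c)<1$. The point $(T_i,1)$ lies above $\gamma_1$, so it is in the basin of $(T_0(c),0)$ described before the claim, and the same conclusion holds.

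For necessity, suppose $T_1(c)>T_i$ and $Z_1(\beta,c)\ge1$. If $Z_1=1$, the backward orbit is $\gamma_1$ itself. It reaches $(T_1(c),0)$ in finite time and gives a weak reactive wave, so equality must be handled separately. I expect this is why the paper's condition uses the strict inequality; the boundary case is the weak special solution on $\beta=\beta_1(c)$. If $Z_1>1$, the orbit through $(T_i,1)$ lies below $\gamma_1$, and by uniqueness for $\tilde X^U$ it cannot cross $\gamma_1$. Going backward it either hits $\{T=T_i\}$ again, since it starts on the left side of the contact point where the orbit has a maximum of $T$ (Claim~\ref{claim(ii)}), or it lands on $(\bar T,0)$ with $\bar T<T_1(c)$ and slides left toward $T<T_i$, because $P_c>0$ there and $T_1$ is a saddle. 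In both cases it leaves $\{T>T_i\}$, so there is no solution.

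For the classification, points on $\{T=T_i\}$ are ordered by $Z$. Because orbits cannot cross, for $Z_1<1<Z_0$ the orbit stays left of $\gamma_0$: it is a monotonic graph and arrives at $(\bar T,0)$ with $\bar T<T_0$. For $1=Z_0$ it is $\gamma_0$, which is of finite-time type. For $1>Z_0$ it crosses $h^r_c$ once (Claim~\ref{claim(ii)}), giving a bump. The main obstacle is the careful bookkeeping of the boundary cases $T_1=T_i$ and $Z_1=1$, together with justifying the ordering argument through the non-Lipschitz axis. Both are resolved by working in $(T,U)$, where uniqueness holds.
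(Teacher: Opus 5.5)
Your proposal is correct and follows essentially the paper's route. The paper supports this claim only through the phase-portrait description just before it: the basin of $(T_0(c),0)$ lies to the right of $\gamma_1$, $\gamma_0$ separates monotonic from bump orbits, orbits cannot cross in the $(T,U)$ chart, and orbits slide backward along $\{Z=0\}$. You make the trapping, the exit through $h^l_c$ or $h^r_c$, and the axis dynamics explicit.

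Your handling of the edge cases is also right. The labels ``finite time'' and ``bump'' in the statement are interchanged: finite time corresponds to $Z_0=1$ and bump to $Z_0<1$. The strict inequality $Z_1<1$ is the correct condition only if ``solution'' means a strong reactive wave tending to $(T_0(c),0)$. Indeed, for $c>\CJ$ and $Z_1=1$ the backward orbit is $\gamma_1$ and stays at the hyperbolic axis equilibrium $(T_1(c),0)$. This gives the weak special wave of Theorem~\ref{theorem_intro1}(ii), not a strong one.
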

	
	We intend to address these issues in the next section by looking at the position  of $Z_1(\beta,c)$ and $Z_0(\beta,c)$ against the value $1,$ in function of the parameter $(\beta,c).$
	
	\subsection{The Chapman-Jouguet case}\label{case_CJ}
	In the previous subsections and in particular in Figures \ref{fig-field-X-U} and \ref{fig-field-X-Z}, we have implicitly assumed that $c>\CJ=\sqrt{2q_0}$. The limit case $c=\CJ$
	can be treated in the same way.   In this case, we have that $T_1(\CJ))=T_0(\CJ)=\sqrt{2q_0}.$
	
	As above, we begin with the phase portrait of $\tilde X^U.$ The two branches $h^l_{\CJ}$ and $h^r_{\CJ}$ start from the point $(0,\CJ)$  with a contact of order 
	$\frac{1}{1-\alpha}$. The two orbits $\gamma_i(\beta,\CJ)$ coincide now in a single orbit 
	$\gamma(\beta,\CJ)$ which is contained in the region $C_{\CJ}$ This follows from Lemma \ref{lem-contact}, but it is also clear for topological reasons. Then, we have that 
	$U_0(\beta,\CJ)=U_1(\beta,\CJ)$. From the phase portrait of $\tilde X^U$ we can deduce the one of $\tilde X$ as above. A possible solution, which exits if and only if $1>Z_0(\CJ),$
	is always of bump type. The phase portraits of $\tilde X^U$ and $\tilde X$ are presented in Figure~\ref{fig-field-c0}.
	\begin{figure}[htp]
		\begin{center}
			\includegraphics[scale=0.6]{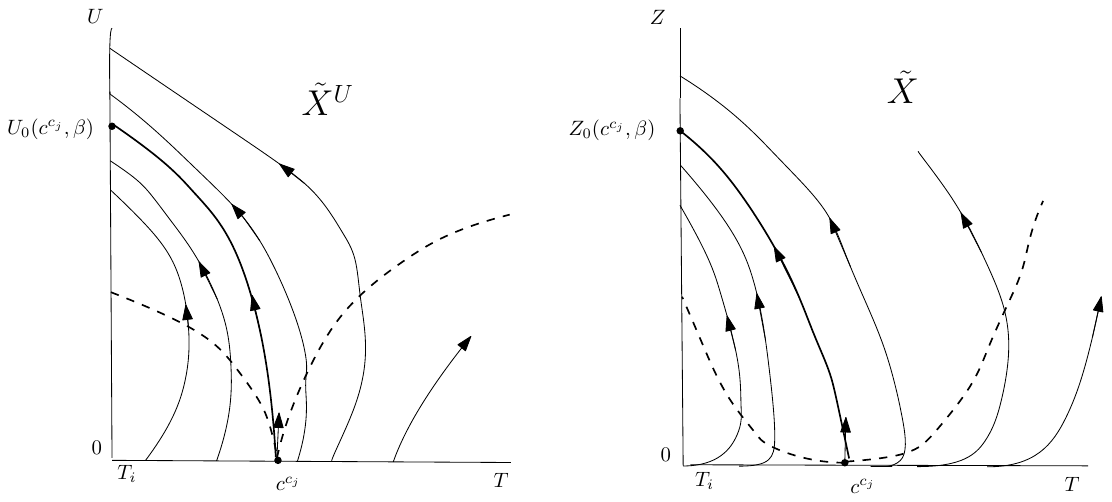}\\
			\caption{Phase portraits for $c=\CJ$}\label{fig-field-c0}
		\end{center}
	\end{figure}
	The vector field $\tilde X$ has a unique singular point $(\CJ,0).$   This point is a degenerate saddle-node singularity. The orbit $\gamma(\beta,\CJ)$ is an unstable separatrice which separates the saddle type sector from the node type sector. When $c$ increases, this point bifurcates into the pair of a degenerate saddle and a degenerate node
	appearing in Figure \ref{fig-field-X-Z}.

	\section{Existence  of a solution in function of $(\beta,c)$}\label{existence}
	We have in mind to study the nature of the trajectory in negative times of $\tilde X$,  passing through $(T_i,1 )$   in function of the parameter $(\beta,c)$. Our goal is to determine if this trajectory is a solution and identify its type. To emphasize that $\tilde X$  depends on the parameter $(\beta,c)$, we will write it as $\tilde X_{\beta,c}$ in this section and the next one.
	\subsection{Property for $c$ large}
	First, we fix any value of $\beta>0$  and look at how it depends on $c$. It is given by the elementary result:
	\begin{lemma}\label{lem-c-large}
		If $c\geqslant c_{\star}=\frac{T_i^2+2q_0}{T_i}$, we have that $T_1(c)\leqslant T_i $ and therefore the degenerate saddle point $(T_1(c),0)$ no longer belongs to $(T_i,+\infty)$. We are in the first case of the claim \ref{claim_solution}: $(T_i,1)$ belongs to  the basin of $(T_0(c),0)$ and the trajectory of this point, for negative times,  is a solution (of one type or the other).
	\end{lemma}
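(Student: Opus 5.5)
The plan is a direct computation on $P_c(T_i)$, followed by an appeal to the phase-portrait description of Section~\ref{phase_portraits}. The first step is purely algebraic. Since $P_c(T)=\frac12 T^2-cT+q_0$ is a convex quadratic with roots $T_1(c)\leqslant T_0(c)$, we have
\[
T_1(c)\leqslant T_i\leqslant T_0(c)\iff P_c(T_i)\leqslant 0\iff c\geqslant \frac{T_i^2+2q_0}{2T_i}=c_* .
\]
Because $c_\star=2c_*>c_*$, the hypothesis $c\geqslant c_\star$ gives $P_c(T_i)<0$. Together with $T_0(c)\geqslant c\geqslant\CJ>T_i$ (by \eqref{condition_Ti}), this yields $T_1(c)<T_i<T_0(c)$. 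So the degenerate saddle $(T_1(c),0)$ lies outside the region $\{T>T_i\}$, which proves the first assertion. It also follows that $Z_i(c)=P_c(T_i)/q_0<0$, so there is no contact point with $\{T=T_i\}$ in $Q$. Hence $h_c=h_c^r$, and $\partial C_c=h_c^r\cup[T_i,T_0(c)]\times\{0\}$.

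Next I would follow the trajectory of $\tilde X^U$ through $(T_i,U^*)$, where $U^*=\frac{1}{1-\alpha}$ is the image of $(T_i,1)$, in negative time. On $\{T=T_i\}$ with $U\geqslant0$ we have $\dot T=P_c(T_i)-q_0Z<0$. So the field points strictly to the left there, and the backward orbit enters $\{T>T_i\}$ and cannot come back to $\{T=T_i\}$ in negative time. Moreover $U$ is strictly monotone, so the backward orbit stays in the strip $0\leqslant U\leqslant U^*$. I then repeat the argument of Claim~\ref{claim(iii)} with $U_i(c)$ replaced by $0$. The set $C_c\cap\{0\leqslant U\leqslant U^*\}$ is compact, being bounded on the right by $h_c^r$. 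By the Poincar\'e--Bendixson theorem, the backward orbit reaches $\partial C_c$ in finite negative time. If it reaches $[T_i,T_0(c)]\times\{0\}$, we are done. If it reaches $h_c^r$, then the unlabelled claim on $h_r\setminus\{(T_0(c),0)\}$ applies, and the orbit hits $\{U=0\}$ at some $(\bar T,0)$ with $\bar T>T_0(c)$.

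Finally I would transport this to $\tilde X$ via $\Phi^{-1}$. In both cases the backward orbit reaches $(\bar T,0)$ with $\bar T>T_i$ in finite fast time. On the invariant axis $\{Z=0\}$ we have $\dot T=P_c(T)$, which is negative on $(T_i,T_0(c))$ and positive for $T>T_0(c)$. Thus in negative time the orbit slides along the axis toward $(T_0(c),0)$, or it lands exactly there when $\bar T=T_0(c)$, which is the finite-time case. Therefore $(T_i,1)$ lies in the basin of $(T_0(c),0)$, the first alternative of \eqref{S} holds, and Claim~\ref{claim_solution} identifies the type (monotonic, finite time, or bump) according to the position of $1$ relative to $Z_0(\beta,c)$.

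The only point requiring care is the second step, namely that the backward orbit cannot escape through $\{T=T_i\}$ or to infinity before hitting $\partial C_c$. The sign $P_c(T_i)<0$ and the compactness of $C_c\cap\{U\leqslant U^*\}$ settle this.
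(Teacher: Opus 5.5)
Your proposal is correct and follows the paper's implicit argument. The sign $P_c(T_i)<0$ forces $T_1(c)<T_i$, so the first alternative of Claim~\ref{claim_solution} holds; beyond that you supply phase-portrait details the paper leaves unstated, in particular rerunning Claim~\ref{claim(iii)} with $U_i(c)$ replaced by $0$, since the contact point has left $Q$. Your observation that the natural threshold is $c_*$, so that the stated $c_\star=2c_*$ is simply a stronger hypothesis, is also right.
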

	\subsection{Properties for $\beta$ near $0$ and $+\infty$.}\label{limite_cases}
	We now fix any value of $c\geqslant \CJ$ and we look at the variation of $\beta$ from $0$ to $+\infty.$ 
	
	\subsubsection {\bf Slow analysis for $\beta\rightarrow 0_{+}$}\label{(a)}
	%\noindent {\bf  (a) Slow-fast analysis : $\beta\rightarrow 0.$ }
	%We begin to look at $\beta\rightarrow 0_+.$  
	We have a {\it slow-fast system} in dimension $2,$  with small parameter $\beta$, see \cite{DDMR} for detailed definitions and properties. As the parameter $c$ is fixed, we will usually skip it.
	
	For convenience, we consider the system  $-\tilde X_{\beta,c}$. The {\it limit equation 
		$-\tilde X_{c,0}$ } for $\beta =0$ has a very simple phase portrait: $h_c$ is a curve of zeros (called the {\it critical curve}). The dynamics outside this critical curve (namely the fast dynamics) is horizontal, without singularities. Here, we  just need to consider the vector field 
	$-\tilde X_{\beta,c}$   on  $Q^+=Q\cap \{Z >0\}.$ On $Q^+,$ the vector field  
	$-\tilde X_{\beta,c}$  is smooth. The two branches of $h_c$ are normally hyperbolic arcs of zeros, $h^r_c$ is attracting and $h^l_c$ repulsive. For any $\beta>0,$ small, we are interested in the orbit of
	$\Gamma_\beta$ of $-\tilde X_{\beta,c}$  starting at the point $(T_i,1).$ 
	\begin{figure}[htp]
		\begin{center}
			\includegraphics[scale=0.5]{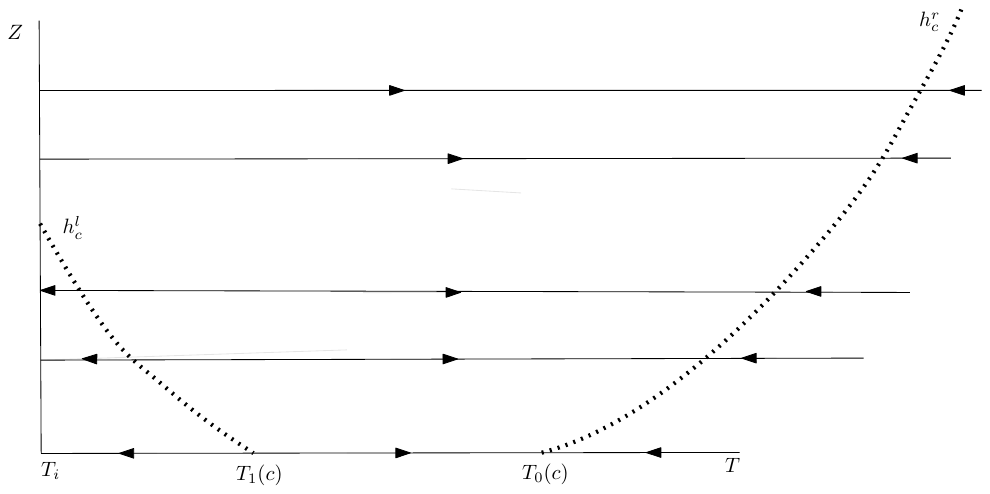}\\
			\caption{The limit vector field for $\beta=0$}\label{fig-limit} 
		\end{center}
	\end{figure}
	We want to prove that, for $\beta>0$ small enough, we will have $ 1>Z_0(\beta,c),$ 
	that is, we have a bump solution. 
	
	We will  give a very basic proof here, which will not use the technicalities of the theory of slow-fast systems as in \cite{DDMR}.
	
	Let $\{ T=h(Z)\}$ the equation of the branch $h^r_c.$ The function $h(Z)$ is smooth and $h'(Z)>0$  for $Z>0.$  We choose any interval $[Z_b,Z_a]$ such that $0<Z_b<1<Z_a.$ and associate with any  $\delta>0$  a tubular neighborhood 
	$$\mathcal{T}_\delta=\{Z_a\leqslant Z\leqslant Z_b, h(Z)-\delta\leqslant T \leqslant h(Z)+\delta\}.$$ 
	$\mathcal{T}_\delta$ is a thin curved rectangle with corners 
	$$a_\delta=(h(Z_a)-\delta,Z_a),\ \  a'_\delta=(h(Z_a)+\delta,Z_a)$$
	$$b_\delta=(h(Z_b)-\delta,Z_b),\ \  b'_\delta=(h(Z_b)+\delta,Z_a).$$ 
	We will note $[a_\delta,a'_\delta]$, $[b_\delta,b'_\delta]$, $[a_\delta,b_\delta]$  and 
	$[a'_\delta,b'_\delta]$ the four sides of $\mathcal{T}_\delta$
	(see Figure \ref{fig-tubular}).
	If $\delta>0$ is small enough, the following condition (R) is fulfilled:
	\begin{equation}
		\tag{R}  
		h(1)-\delta>h(Z_b)+\delta. 
	\end{equation}
	
	%Sorry, it is not the case in the Figure \ref{fig-tubular}!
	
	%
	%
	\begin{figure}[htp]
		\begin{center}
			\includegraphics[scale=0.8]{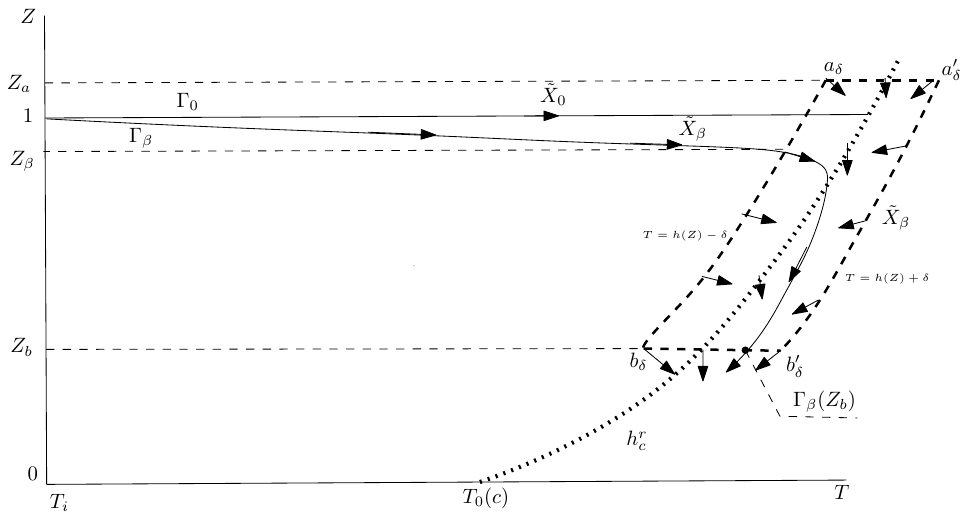}\\
			\caption{Trapping the solution for $\beta$ small.}\label{fig-tubular} 
		\end{center}
	\end{figure}
	Recall that any orbit of $-\tilde X_{\beta,c}$  through a point  $(T_i,\bar Z)$ and for any $\beta>0$  is a graph  above an interval of the vertical $Z$-axis. The following proposition proves that, for  $\beta>0$  small enough, the orbit $\Gamma_\beta$  through the point $(T_i,1)$ is a solution of bump type. As follows from the analysis presented in Section 1, it is sufficient to prove that $\Gamma_\beta$ cuts the homocline branch $h_c^r.$
	
	\begin{proposition}\label{prop-beta0}
		Choose any   $\delta>0,$ verifying the condition (R). We consider the orbit 
		$\Gamma_\beta$ of $-\tilde X_{\beta,c}$, starting at the point 
		$(T_i,1).$ If $\beta>0$ is small enough,  $\Gamma_\beta$ is defined at least above the interval 
		$[Z_b,1]$ as a graph of a smooth function $T=\varphi_\beta(Z).$ There is a $\bar Z_\beta\in(Z_b,1)$ such that $\varphi'_\beta(\bar Z_\beta)=0.$ As a consequence, $\Gamma_\beta$ cuts the homocline branch $h_c^r$ and then is a solution of bump type. This implies  that $Z_0(\beta,c)<1$ if $\beta>0$ is small enough.
	\end{proposition}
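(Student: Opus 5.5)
The plan is to follow the orbit $\Gamma_\beta$ of $-\tilde X_{\beta,c}$ from $(T_i,1)$ in two phases: a fast horizontal phase that brings it into the tube $\mathcal{T}_\delta$ around $h^r_c$, and a slow phase in which it is trapped in $\mathcal{T}_\delta$ and slides down to $Z=Z_b$. Comparing the $T$-coordinates at entry and at exit, using (R), will give an interior maximum of $\varphi_\beta$. Throughout, $\Gamma_\beta$ is a graph $T=\varphi_\beta(Z)$ by Claim \ref{claim(i)} (transported by $\Phi^{-1}$ on $Q^+$). I also use that, along $-\tilde X_{\beta,c}$,
$$\dot Z=-\beta\varphi(T)Z^\alpha<0,$$
so $Z$ strictly decreases on $Q^+$ and $Z$ can serve as the parameter along the orbit.

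\textbf{Fast phase.} At $(T_i,1)$ the horizontal component of $-\tilde X_{\beta,c}$ is
$$cT_i-\tfrac12T_i^2>0,$$
because $T_i<\CJ\leqslant c<2c$. This point lies in $C_c$, above $h_c$. Fix $Z'\in(Z_b,1)$ close enough to $1$ that $h(Z')-\delta>h(1)-2\delta$, with $\delta$ further shrunk so that (R) still holds for the relevant margins. Consider the compact region
$$D=\{Z'\leqslant Z\leqslant 1,\ T_i\leqslant T\leqslant h(Z)-\delta\}\subset C_c,$$
which stays away from $h_c$. The first component of $-\tilde X_{\beta,c}$ is $-\big(P_c(T)-q_0Z\big)$ and does not depend on $\beta$. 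It is strictly positive on $D$, so by compactness it is at least some $m>0$ there. Hence, while the orbit stays in $D$, $T$ increases at rate at least $m$, and it must leave $D$ within time $\tau^*\leqslant (h(1)-T_i)/m$. During that time
$$Z\geqslant 1-\beta\,\tau^*\max_{[T_i,h(1)]}\varphi,$$
so for $\beta$ small we have $Z>Z'$. The orbit therefore exits $D$ through the right side $T=h(Z)-\delta$, that is, it enters $\mathcal{T}_\delta$ through the side $[a_\delta,b_\delta]$ at a point $(h(Z_e)-\delta,Z_e)$ with $Z_e\in(Z',1)$. In particular $\varphi_\beta(Z_e)\geqslant h(Z')-\delta$.

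\textbf{Trapping phase.} On the lateral sides of $\mathcal{T}_\delta$, the horizontal component of $-\tilde X_{\beta,c}$ equals $\mp$ a quantity bounded below by some $\mu>0$ uniformly in $\beta$, since $h^r_c$ is an attracting normally hyperbolic curve of zeros for $-\tilde X_{c,0}$ and $T\mapsto P_c(T)-q_0Z$ is increasing near $h^r_c$. The vertical component is $O(\beta)$, while the slope of the sides is bounded. So for $\beta$ small the field points strictly into $\mathcal{T}_\delta$ along $[a_\delta,b_\delta]$ and $[a'_\delta,b'_\delta]$, and the orbit cannot leave through them. On $\mathcal{T}_\delta$ we have
$$\dot Z\leqslant -\beta\min\varphi\cdot Z_b^\alpha<0,$$
so the orbit reaches the bottom side $[b_\delta,b'_\delta]$ in finite time. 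There $\varphi_\beta(Z_b)\leqslant h(Z_b)+\delta$. This also shows that $\Gamma_\beta$ is defined above $[Z_b,1]$.

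\textbf{Conclusion.} We have $\varphi_\beta(1)=T_i$, $\varphi_\beta(Z_e)\geqslant h(1)-2\delta$ and $\varphi_\beta(Z_b)\leqslant h(Z_b)+\delta$; by (R) with the adjusted $\delta$, the latter is strictly less than $h(1)-2\delta$. So the smooth function $\varphi_\beta$ on $[Z_b,1]$ attains its maximum at an interior point $\bar Z_\beta\in(Z_b,1)$, and $\varphi'_\beta(\bar Z_\beta)=0$. At that point the field is vertical, so the point lies on $h_c$. It lies on $h^r_c$ rather than $h^l_c$ because $T>T_1(c)$ there, or more simply because the orbit is inside $\mathcal{T}_\delta$. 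By Claim \ref{claim(ii)} and the phase-portrait description, an orbit meeting $h^r_c$ is of bump type, which gives $Z_0(\beta,c)<1$.

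The main obstacle is making the trapping estimate on the lateral sides uniform in $\beta$. For this I would first write the horizontal component near $h^r_c$ as
$$-\big(P_c(T)-P_c(h(Z))\big)=-P_c'(\theta)\,\big(T-h(Z)\big)$$
for some $\theta$ between $T$ and $h(Z)$, with $P_c'\geqslant P_c'(h(Z_b))-C\delta>0$ on the tube. I would then compare this with the $O(\beta)$ vertical drift times the bounded slope $h'$ of the sides.
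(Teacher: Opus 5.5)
Your proof is correct and follows essentially the paper's route: a fast entry into $\mathcal{T}_\delta$ near its top, trapping in the tube down to $Z=Z_b$, and a comparison of $\varphi_\beta$ at entry and at $Z_b$ via (R). You close with an interior maximum instead of the paper's mean-value/intermediate-value argument based on $\varphi'_\beta(1)<0$, and you make quantitative two steps the paper only asserts: the fast phase, and the inward-pointing on the lateral sides (which on the right side does require $\beta$ small). Two small points: you do not need to shrink $\delta$, since (R) and continuity of $h$ let you take $Z'$ with $h(Z')-\delta>h(Z_b)+\delta$ directly; and you should also require $Z'>Z_i(c)$, so that $D\subset C_c$ holds.
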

	\begin{proof} 
		Choose a $\delta>0$ verifying the condition (R) and consider the orbit $\Gamma_\beta$ through $(T_i,1)$.  It is the graph of a function $T=\varphi_\beta(Z),$ smooth for $Z>0.$ 
		If $\beta>0$ is small enough, $\Gamma_\beta $ contains an arc between $(T_i,1)$ and a point $(h(Z_\beta)-\delta,Z_\beta)$, located at the boundary of $\mathcal{T}_\delta$ below the point $a_\delta.$ Moreover $Z_\beta\rightarrow 1_-$ for $\beta\rightarrow 0$ and the arc of $\Gamma_\beta$ above $[Z_\beta,1]$ converges smoothly toward the horizontal. This means in particular that $\varphi_\beta$ is defined in a neighborhood of $1$, with $\varphi_\beta(Z_\beta)=h(Z_\beta)-\delta$,  and that $\varphi'_\beta(1)\rightarrow -\infty$ when $\beta\rightarrow 0.$
		
		We look now at the continuation of $\Gamma_\beta$ from  the point $(h(Z_\beta)-\delta,Z_\beta).$ First, we observe that  the rectangle $\mathcal{T}_\delta$ has a {\it trapping property:} for any $\beta>0,$ the vector field $-\tilde X_\beta$  is  transversely pointing inward along the three sides $[a_\delta,a'_\delta]$, $[a'_\delta,b'_\delta]$, 
		$[a_\delta,b_\delta]$  and transversely pointing outward along the last side 
		$[b_\delta,b'_\delta].$  Moreover, $-\tilde X_\beta$ has no singular point inside $\mathcal{T}_\delta.$ Then, it follows from the Poincar\'e-Bendixson theorem that  the trajectory entering the rectangle at the point 
		$(h(Z_\beta)-\delta,Z_\beta)\in [a_\delta,b_\delta]$ must reach a point in
		$[b_\delta,b'_\delta].$  This means that  $\varphi_\beta$ is defined on $[Z_b,1]$ and that $\varphi_\beta(Z_b)\in [h(Z_d)-\delta,h(Z_b)+\delta],$ and in particular it holds
		\begin{equation}\label{eq-Zb1}
			\varphi_\beta(Z_b)<h(Z_b)+\delta.
		\end{equation}
		Now, as $Z_\beta\rightarrow 1$ for $\beta\rightarrow 0,$ it follows from condition (R) that 
		\begin{equation}\label{eq-Zb2}
			h(Z_\beta)-\delta>h(Z_b)+\delta,
		\end{equation}
		if $\beta>0$ is small enough.
		
		By definition, $h(Z_\beta)-\delta =\varphi_\beta(Z_\beta).$  Then it follows from
		(\ref{eq-Zb1}) and (\ref{eq-Zb2}) that $\varphi_\beta(Z_b)<\varphi_\beta(Z_\beta).$
		As $Z_\beta>Z_b$,  the mean value theorem implies that there exists a 
		$\bar Z_1\in ]Z_b,Z_\beta[$ such that  $\varphi_\beta'(\bar Z_1)>0.$ As 
		$\varphi_\beta'(1)<0$ for $\beta>0$ small enough, it eventually follows from the intermediary value theorem that there exists a $\bar Z_\beta\in (\bar Z_1,1)\subset (Z_b,1)$ such that 
		$\varphi'_\beta(\bar Z_\beta)=0.$  This completes the proof.
	\end{proof}
	
	\begin{remark}
		Using the general theory of slow-fast  systems, it is possible to obtain additional properties of the flow when $\beta\rightarrow 0.$  For example, one can prove that $Z_0(\beta,c)$ and $Z_1(\beta,c)$ tends toward $Z_i(c)$ when $\beta\rightarrow 0.$
	\end{remark}
	\subsubsection{\bf Limit for $\beta\rightarrow +\infty$}\label{(b)}
	%\noindent{\bf (b) Limit for $\beta\rightarrow +\infty$}
	To look at the limit $\beta\rightarrow +\infty$,  it is better to return to the initial vector field which we now write $X_{\beta,c}$, and to the slow time $\xi.$ We also introduce the corresponding vector field 
	\begin{equation}\label{eq-XUbeta}
		X^U_{\beta,c}:= 
		\left\{
		\begin{array}{l}
			{\dot T} = \frac{1}{\beta}\Big[-cT+\frac{1}{2}T^2+q_0\Big(1-(1-\alpha)^{\frac{1}{1-\alpha}}U^{\frac{1}{1-\alpha}}\Big)\Big],  \\
			{\dot U} = \varphi(T).
		\end{array}
		\right.
	\end{equation}
	For $\beta=+\infty,$  we have a vertical vector field, without zeros: 
	$X^U_\infty= \varphi(T)\frac{\partial}{\partial U}.$  From this, we can deduce the following result:
	\begin{proposition}\label{prop-betainfty}
		Take any fixed $c\geqslant \CJ.$ If $\beta\rightarrow +\infty,$
		we have that :
		$$Z_1(\beta,c) \rightarrow +\infty \ \  {\rm and }\ \  Z_0(\beta,c) \rightarrow +\infty$$ 
	\end{proposition}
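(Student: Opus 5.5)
Since $\gamma_0(\beta,c)$ and $\gamma_1(\beta,c)$ both start on the axis $\{U=0\}$, the plan is to show that, in the slow time $\xi$ of $X^U_{\beta,c}$, the horizontal drift of these orbits is of order $1/\beta$ while their vertical speed stays bounded below. Such an orbit then needs a $U$-height of order $\beta$ to move horizontally a fixed distance from $T_j(c)$ to $T_i$, and so $U_j(\beta,c)\to+\infty$. Because $Z=\big((1-\alpha)U\big)^{\frac{1}{1-\alpha}}$ is increasing and unbounded in $U$, this gives $Z_j(\beta,c)\to+\infty$ for $j=0,1$. Note that $\gamma_0,\gamma_1$ and $X^U_{\beta,c}$ differ from the orbits of $\tilde X^U_{\beta,c}$ only by a time reparametrization, so the orbits, and hence $U_j(\beta,c)$, are the same.

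\textbf{Step 1: bounds on a fixed box.} Fix $c\geqslant\CJ$ and $M>0$, and set
\[
R_M=[T_i,T_0(c)]\times[0,M].
\]
The function $F(T,U)=-cT+\tfrac12T^2+q_0\big(1-(1-\alpha)^{\frac{1}{1-\alpha}}U^{\frac{1}{1-\alpha}}\big)$ is continuous on $R_M$, so $|F|\leqslant C_M$ there. Since $\varphi$ is continuous and positive on $[T_i,T_0(c)]$, we also have $\varphi\geqslant m>0$ there. Along any arc of $\gamma_j$ lying in $R_M$ this gives
\[
\left|\frac{dT}{dU}\right|=\frac{|F|}{\beta\,\varphi(T)}\leqslant \frac{C_M}{\beta m}.
\]

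\textbf{Step 2: the orbits stay in $R_M$.} Claim~\ref{claim(iv)} says $\gamma_j$ goes up and to the left in $C_c$, so it satisfies $T_i\leqslant T\leqslant T_j(c)\leqslant T_0(c)$ until it reaches $\{T=T_i\}$. Therefore, as long as $U\leqslant M$, the orbit stays in $R_M$.

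\textbf{Step 3: conclusion.} Suppose $U_j(\beta,c)\leqslant M$. Integrating the slope bound of Step 1 from $U=0$ to $U=U_j$ gives
\[
T_j(c)-T_i\leqslant \frac{C_M M}{\beta m}.
\]
The left side is a fixed positive number when $T_j(c)>T_i$. For $j=0$ this always holds, since $T_0(c)\geqslant c\geqslant\CJ>T_i$. For $j=1$ it holds exactly when $c<c_*$, which is the only range where $\gamma_1$ and $Z_1$ are defined. Hence for $\beta>\beta_M:=C_M M/\big(m\,(T_j(c)-T_i)\big)$ we get $U_j(\beta,c)>M$. Since $M$ is arbitrary, $U_j(\beta,c)\to+\infty$, and so $Z_j(\beta,c)\to+\infty$.

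\textbf{Main obstacle.} The delicate point is Step 2: we must make sure the orbit cannot leave the box by going to large $T$ before reaching $T_i$, which is what makes the uniform constant $C_M$ legitimate. Monotonicity of $T$ along orbits in $C_c$ (Claims~\ref{claim(ii)} and \ref{claim(iv)}) settles this. One could also phrase the argument as continuous dependence on $1/\beta$ near the limit field $X^U_\infty=\varphi(T)\partial_U$, whose orbits are vertical lines that never meet $\{T=T_i\}$. The explicit estimate above avoids any regularity issues at $1/\beta=0$.
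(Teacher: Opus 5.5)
Your argument is correct, and it takes a different route from the paper. The paper argues qualitatively. It notes that $X^U_{\beta,c}$ converges uniformly on compact subsets of $Q$ to the vertical field $\varphi(T)\partial_U$, and says this ``obviously'' makes $Z_0,Z_1$ take arbitrarily large values. It then invokes the strict monotonicity $\partial Z_i/\partial\beta>0$ from Proposition~\ref{prop-rotating}, proved later via the rotating property, to upgrade ``arbitrarily large values'' to an actual limit.

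You instead make the compactness step explicit. Claim~\ref{claim(iv)} confines $\gamma_j$ to $T_i\leqslant T\leqslant T_j(c)$, which is exactly what justifies working in a fixed compact box; the paper leaves this implicit. On $R_M$ you then have the $\beta$-independent bounds $|F|\leqslant C_M$ and $\varphi\geqslant m$. Together these give a threshold $\beta_M$ such that $U_j(\beta,c)>M$ for \emph{every} $\beta>\beta_M$. That is already the full limit, so you need neither the monotonicity in $\beta$ nor the forward reference. Because $C_M=O(M^{\frac{1}{1-\alpha}})$, your estimate can also be inverted to give an explicit lower growth rate for $U_j$ in $\beta$.

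The paper's version buys brevity and a clean picture: the limit field's orbits are vertical lines that never reach $\{T=T_i\}$. Yours buys self-containment and quantitative control.

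Your restriction to $c<c_*$ for $j=1$ is necessary, not cosmetic. At $c=c_*$ one has $T_1(c_*)=T_i$, and the paper itself uses $Z_1(\beta,c_*)=0$ for all $\beta$. So for $Z_1$ the statement must be read with $c\in[\CJ,c_*)$.
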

	\begin{proof}
		Fix a value $c\geqslant \CJ.$ Clearly, the vector field $X^U_{\beta,c}$ converges uniformly toward the vertical vector field 
		$\varphi(T)\frac{\partial}{\partial U}$ on each compact subset of $Q.$
		This obviously implies that $Z_1(\beta,c)$ and $Z_0(\beta,c)$ take arbitrarily large values when $\beta\rightarrow +\infty.$ But since these two functions are strictly increasing, as proved in Proposition \ref{prop-rotating} below,  they have to converge toward $+\infty.$
	\end{proof}

	\subsection{\ The transition map $Z(T,\beta,c)$ }
	\vskip5pt
	
	The vector field $\tilde X^U_{\beta,c}$ is transverse  to the horizontal axis $\{U=0\}.$ From  the claim \ref{claim(iv)} of Subsection \ref{tilde_U}, it follows that each trajectory  starting at a point of $[T_i,T_0(c)]\times \{0\}$ 
	cuts transversally the vertical axis $\{T=T_i\}$ after a finite time.  By continuity, this implies that there exists a $T(\beta,c)>T_0(c)$ such that  each trajectory  starting at a point of $[T_i,T(\beta,c))\times \{0\}$ 
	cuts also transversally the vertical axis $\{T=T_i\}$ after a finite time.
	
	This defines  a transition map from $[T_i,T(\beta,c))\times \{0\}$ to $\{T=T_i\}.$ Choosing  for convenience the variable $Z$ to parametrize the vertical axis, this transition map is a function $Z(T,\beta,c)$  of class $\big [\frac{1}{1-\alpha}\big]$ on the domain:
	$$\mathcal{Z}=\{(T,\beta,c) \ | \  c\in [\CJ,+\infty),\  \beta\in (0,+\infty), \ T\in [T_i,T(\beta,c) )\}.$$
	This follows from the Cauchy theorem and the implicit  function theorem.
	
	For each fixed $(\beta,c),$  the transition map $T\mapsto Z(T,\beta,c)$  is a diffeomorphism of class $\big [\frac{1}{1-\alpha}\big],$ preserving the orientation. This means that, for all $(T,\beta,c)\in \mathcal{Z}$,
	\begin{equation}\label{eq-PartialZ-PartialT}
		\frac{\partial Z}{\partial T}(T,\beta,c)>0.
	\end{equation}
	The more interesting values of  $Z(T,\beta,c)$  are $Z_1(\beta,c)=Z(T_1(c),\beta,c)$ and $Z_0(\beta,c)=Z(T_0(c),\beta,c).$  Since $Z(T,\beta,c)$  is of class $\big [\frac{1}{1-\alpha}\big]$, we also have that $Z_1(\beta,c)$ and $Z_0(\beta,c)$ are of class $\big [\frac{1}{1-\alpha}\big].$

	\subsection{Rotating property}\label{rotating_property}
	
	We want  to prove a   {\it rotating property} for the vector field  $\tilde X^U_{\beta,c}$ (see \cite{DU}), in function of the parameters $\beta$ and $c$.
	The general definition of the rotating property is as follows:
	\begin{definition}\label{def-rotating}
		Let $X_\varepsilon$ a $1$-family of vector fields  with  parameter 
		$\varepsilon\in (-\varepsilon_0,\varepsilon_0),$ for some small $\varepsilon_0>0,$ defined on an open connected subset $W\subset \R^2.$ We assume that this family is at least 
		$C^1$, i.e., that the components are $C^1$ as functions of $(m,\varepsilon)\in W\times (-\varepsilon_0,\varepsilon_0)$. 
		We say that $X_\varepsilon$ has {\rm rotating property in function of $\varepsilon$} if for all $m\in W$ and all $\varepsilon,$ the vectors $X_\varepsilon(m)$ and $\frac{\partial X_\varepsilon}{\partial \varepsilon}(m)$ are linearly independent. Then, 
		${\rm Det}(X_\varepsilon(m),\frac{\partial X_\varepsilon}{\partial \varepsilon}(m))$
		has a constant sign on $W.$
		% the family is direct rotating if this sign is positive, and clockwise rotating if it is negative. Equivalently, if $\varepsilon<\varepsilon',$ we rotate in the direct direction  from $X_\varepsilon(m)$ to $X_{\varepsilon'}(m)$
		% in the first case and in the clockwise direction in the second case. 
	\end{definition}
	An elementary basic result is the following:
	\begin{lemma}\label{lem-rotating}
		Let $W$ be an open connected subset of $\R^2$ and  $X_\varepsilon$  an $\varepsilon$-family  of vector field s, of class 
		$C^k,$ for some $k\geqslant 1,$ defined on a neighborhood $Q$ of $\overline W$ (the closure of $W$). We assume that $X_\varepsilon$ is on $W$  a rotating family as in Definition \ref{def-rotating}.   Let be $p\in \overline W$  and $\gamma_\varepsilon(p)$ the positive orbit of $X_\varepsilon$ in  $Q$, through $p.$  We assume that $\gamma_0$ is contained in $\overline W,$ but is not entirely included in $\partial W=\overline W\setminus W.$ 
		
		Let be $q\in \gamma_0(p)$ with $q\not =p.$ We choose a transverse section $\Sigma$ to $X_0$  through $q,$  with a smooth coordinate  $s,$   such that $q=\{s=0\}.$
		
		Assume that $\varepsilon_0$  is small enough, such that $\Sigma$ is also transverse to  $X_\varepsilon$ for all $\varepsilon$ and such that $\gamma_\varepsilon(p)$ cuts $\Sigma$ at a point of coordinate $s=P(\varepsilon),$  for all $\varepsilon$ (by assumption $P(0)=0$).
		Then, the function  $s=P(\varepsilon)$ is $C^k$ and $P'(0)\not =0:$ 
		we have that $P'(0)>0$ if the vector $\frac{\partial X_\varepsilon}{\partial \varepsilon}(q)$ is oriented in the $s$-direction and $P'(0)<0$ if not.
		See Figure \ref{fig-rot-lem}.
	\end{lemma}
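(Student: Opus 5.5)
The plan is to prove Lemma \ref{lem-rotating} with the classical variational argument: differentiate the flow with respect to $\varepsilon$, and express $P'(0)$ as an integral, along $\gamma_0$, of the determinant $\mathrm{Det}(X_0,\partial_\varepsilon X_\varepsilon)$ weighted by a positive factor.

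\emph{Regularity.} Let $\phi_\varepsilon(t,m)$ be the flow of $X_\varepsilon$. Since the family is $C^k$ in $(m,\varepsilon)$, the flow is $C^k$ in $(t,m,\varepsilon)$. Let $t_0>0$ be the time with $\phi_0(t_0,p)=q$. Because $\Sigma$ is transverse to $X_0$ at $q$, the implicit function theorem applied to $s(\phi_\varepsilon(t,p))=0$ near $(t_0,0)$ gives a $C^k$ hitting time $t(\varepsilon)$. Hence $P(\varepsilon)=s(\phi_\varepsilon(t(\varepsilon),p))$ is $C^k$, and $P(0)=0$. (For the rotating conclusion one needs $\varepsilon\mapsto X_\varepsilon$ at least $C^1$, and $k\geqslant1$ is assumed.)

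\emph{Derivative formula.} Put $\gamma(t)=\phi_0(t,p)$ and let $\eta(t)=\partial_\varepsilon\phi_\varepsilon(t,p)|_{\varepsilon=0}$. It solves the inhomogeneous variational equation
$$\dot\eta=DX_0(\gamma(t))\,\eta+\partial_\varepsilon X_\varepsilon(\gamma(t)),\qquad \eta(0)=0 .$$
Set $w(t)=\mathrm{Det}(X_0(\gamma(t)),\eta(t))$. Using $\frac{d}{dt}X_0(\gamma(t))=DX_0\,X_0$ and the identity $\mathrm{Det}(Au,v)+\mathrm{Det}(u,Av)=\mathrm{tr}(A)\,\mathrm{Det}(u,v)$, we get
$$\dot w=\mathrm{div}X_0(\gamma(t))\,w+\mathrm{Det}\big(X_0,\partial_\varepsilon X_\varepsilon\big)(\gamma(t)),\qquad w(0)=0 .$$
Variation of constants then gives
$$w(t_0)=\int_0^{t_0}\exp\Big(\int_u^{t_0}\mathrm{div}X_0(\gamma(r))\,dr\Big)\,\mathrm{Det}\big(X_0,\partial_\varepsilon X_\varepsilon\big)(\gamma(u))\,du .$$
The exponential is positive. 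The determinant has a constant sign and is nonzero wherever $\gamma(u)\in W$. Since $\gamma_0\subset\overline W$, by continuity the determinant is $\geqslant0$ (or $\leqslant0$) on the whole arc. The arc is not contained in $\partial W$, so it meets the open set $W$ on a set of times of positive measure, and therefore $w(t_0)\neq0$, with the sign of the determinant. If needed, I will restrict to $q$ chosen so that the arc from $p$ to $q$ enters $W$. The expected difficulty is exactly this point, namely ensuring that the arc up to $q$ meets $W$. I would handle it by noting that $\gamma_0\cap W$ is open in $\gamma_0$, hence of positive length; alternatively, one can take $p$ replaced by an earlier point.

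\emph{Conclusion.} Differentiating $P(\varepsilon)$ at $0$ gives
$$P'(0)=ds_q\big(\eta(t_0)+t'(0)X_0(q)\big)=ds_q\big(\eta(t_0)\big)+t'(0)\,ds_q\big(X_0(q)\big).$$
Working modulo $X_0(q)$, the transverse component of $\eta(t_0)$ is measured by $w(t_0)=\mathrm{Det}(X_0(q),\eta(t_0))$. Along $\Sigma$ the ratio $P'(0)/w(t_0)$ equals $1/\mathrm{Det}(X_0(q),\partial_s)$, which is nonzero by transversality, so $P'(0)\neq0$. Comparing signs, $P'(0)$ has the sign of
$$\mathrm{Det}(X_0,\partial_\varepsilon X)(q)\,\big/\,\mathrm{Det}(X_0(q),\partial_s),$$
which is positive exactly when $\partial_\varepsilon X_\varepsilon(q)$ points in the $s$-direction relative to $X_0(q)$.
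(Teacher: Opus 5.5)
Your proof is correct, and it is essentially the alternative the paper only mentions in the remark after its proof (first-order variational equations). The paper itself takes a different route. It straightens $X_0$ by the flow-box theorem ($X_0=\partial_x$, $p=(0,0)$, $\Sigma=\{1\}\times[-1,1]$) and multiplies by positive functions to reach $\partial_x+\varepsilon\bar B\,\partial_y$. It then solves the resulting scalar equation for $y_\varepsilon(x)$ to first order in $\varepsilon$, and reads off $P'(0)=\int_0^1B(x,0,0)\,dx>0$ from $B(\cdot,0,0)\geqslant 0$ and $B(\cdot,0,0)\not\equiv 0$.

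You stay in the original coordinates and, via $w=\mathrm{Det}(X_0,\eta)$, obtain the intrinsic formula
\[
P'(0)=\frac{1}{\mathrm{Det}(X_0(q),\partial_s)}\int_0^{t_0}\exp\Big(\int_u^{t_0}\mathrm{div}X_0(\gamma(r))\,dr\Big)\,\mathrm{Det}\big(X_0,\partial_\varepsilon X_\varepsilon\big)(\gamma(u))\,du .
\]
Here the divergence weight and the transversality factor are explicit; the paper's chart normalizes both to $1$. The paper's version is shorter and more geometric. Yours needs no normal form, no invariance argument under conjugacy and equivalence, and no tracking of regularity through the flow-box chart (pulling a $C^k$ family back by a $C^k$ chart only yields $C^{k-1}$ coefficients). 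The $C^k$ regularity of $P$ also comes cleanly from the implicit function theorem for the hitting time.

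Two small corrections:
\begin{enumerate}
\item The implicit function theorem must be applied to a local defining function of $\Sigma$, not to the coordinate $s$, whose zero set on $\Sigma$ is just $q$.
\item The sign of $P'(0)$ is governed by the constant sign of $\mathrm{Det}(X_0,\partial_\varepsilon X_\varepsilon)$ on $W$, not by its value at $q$, which may vanish when $q\in\partial W$.
\end{enumerate}

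On the difficulty you flag, your remedies do not work. That $\gamma_0\cap W$ has positive length does not place any of it before $q$. Moving $p$ backwards changes the orbits $\gamma_\varepsilon(p)$ being compared. In fact nothing can repair this: the statement genuinely needs the arc from $p$ to $q$, not merely $\gamma_0$, to meet $W$. Otherwise the integrand can vanish on all of $[0,t_0]$. For example, for $\partial_x+\varepsilon g\,\partial_y$ with $g>0$ on $W$ but $g(\cdot,0)=0$ on the segment from $p$ to $q$, one gets $P\equiv 0$. The paper's proof makes the same reading tacitly when it deduces $B(\cdot,0,0)\not\equiv 0$ on $[0,1]$. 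The condition holds in every application, since there the orbits leave $\partial C_c$ or $\partial Q$ immediately after $p$.
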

	
	\begin{figure}[htp]
		\begin{center}
			\includegraphics[scale=0.6]{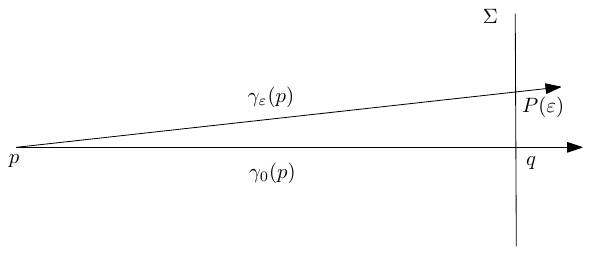}\\
			\caption{The function $P(\varepsilon)$}\label{fig-rot-lem} 
		\end{center}
	\end{figure}
	
	\begin{proof}
		Let us remark that the result does not change if we replace $X_\varepsilon$ by a family 
		$C^k$- conjugated (i.e. changed by a $C^k$- family of diffeomorphisms) or $C^k$-equivalent  (i.e. multiplied by a $C^k$- family of positive functions). We will use this possibility several times. We will keep the name $X_\varepsilon$ for the resulting vector field family.
		
		First, we use the flot box theorem (see, e.g., \cite{DLA}) in a neighborhood 
		$\mathcal{T}$ of $\gamma_0(p)$ in $Q$, with local 
		$C^k$ coordinates $(x,y)$ in which, up a  $C^k$-equivalence, we have that $X_0=\frac{\partial}{\partial x},$ $p=(0,0)$ and $\Sigma=\{1\}\times [-1,1].$ In $\mathcal{T}$,  the family takes the form (for $\varepsilon_0>0$ small enough and up to $C^k$ families of diffeomorphisms and equivalences):
		$$
		X_\varepsilon := \Big(1+\varepsilon A(x,y,\varepsilon)\Big)\frac{\partial}{\partial x}+\varepsilon B(x,y,\varepsilon)\frac{\partial}{\partial y}.
		$$
		As $\gamma_0\subset \bar W$ and changing $y$ if necessary by $-y$, we have that 
		$B(x,0,0) \geqslant 0$ (this means that  $\frac{\partial X_\varepsilon}{\partial \varepsilon}$ is oriented in the $y$-direction  along the $x$-axis).
		Moreover, the hypothesis that $\gamma_0$ is not completely contained in 
		$\partial W$ implies that $B(x,0,0)\not \equiv 0$ and then that:
		\begin{equation}\label{eq-intB}
			\int_0^1B(x,0,0)dx>0.
		\end{equation}
		If $\varepsilon_0>0$ is small enough  we have that
		$1+\varepsilon A(x,y,\varepsilon)>0,$  and then, up  to $C^k$-equivalence,
		we have that:
		\begin{equation}
			X_\varepsilon := \frac{\partial}{\partial x}+\varepsilon\bar B(x,y,\varepsilon) \frac{\partial}{\partial y},
		\end{equation}
		for a new $C^k$ function $\bar B(x,y,\varepsilon)=B(x,y,0)+O(\varepsilon).$   The equation of trajectories takes the form:
		
		\begin{equation}\label{eq-xy}
			\tilde X := 
			\left\{
			\begin{array}{l}
				{\dot x} = 1  \\
				{\dot y} =\varepsilon \bar B(x,y,\varepsilon).
			\end{array}
			\right.
		\end{equation}
		
		We look at the trajectory of $p$. For it, we have that $x\equiv t$ and then we can reduce the equation to a single line for a function $y_\varepsilon(x)$ whose  $\gamma_\varepsilon(p)$ is the graph:
		
		\begin{equation}\label{eq-y}
			\frac{dy_\varepsilon}{dx}(x)=\varepsilon \bar B(x,y_\varepsilon,\varepsilon).
		\end{equation}
		As $y_\varepsilon(0)=0$ it follows from (\ref{eq-y}) that  $y_\varepsilon(x)=O(\varepsilon).$ Reporting this in (\ref{eq-y}),  we have that 
		\begin{equation}\label{eq-y-must}
			\frac{dy_\varepsilon}{dx}(x)=\varepsilon B(x,0,0)+O(\varepsilon^2),
		\end{equation}
		where  the remainder $O(\varepsilon^2)$ is now a function of $(x,\varepsilon)$, of class $C^k.$ This equation (\ref{eq-y-must}) implies that 
		\begin{equation}\label{eq-y-must-2}
			y_\varepsilon(x)=\varepsilon\int_0^xB(\tau,0,0)d\tau +O(\varepsilon^2).
		\end{equation}
		As $P(\varepsilon)=y_\varepsilon(1)$, we obtain that  $P(\varepsilon)$ is 
		$C^k$ and, using (\ref{eq-intB}), that: 
		$$P'(0)=\int_0^1 B(\tau,0,0)d\tau >0. $$
		Clearly, the sign of $P'(0)$ changes if we change the orientation of $\Sigma$ (i.e. of the $y$-axis).
		This finishes the proof.
	\end{proof}
	
	\begin{remark}
		Lemma \ref{lem-rotating} can be proved using the general theory of 
		first order variational equations (see \cite{CL}).  This theory would give an intrinsic expression for the derivative $P'(0).$
	\end{remark}
	
	We now return to the family  $\tilde X^U_{\beta,c}.$ 
	The vector field  $\frac{\partial \tilde  X^U_{\beta,c}}{\partial \beta}(m)=\varphi(T)\frac{\partial}{\partial U}$ is transverse to  $\tilde X^U_{\beta,c}$   on $C_c.$
	Moreover,  the pair $\big(\tilde  X^U_{\beta,c}(m) ,\frac{\partial \tilde  X^U_{\beta,c}}{\partial \beta}(m)\big)$  has a clockwise orientation.  
	Then, the family $\tilde X^U_{\beta,c}$  has the rotating property in the clockwise direction on the open connected set $C_c$, in function of the parameter $\varepsilon=\beta-\beta_0$ for any $\beta_0\in(0,+\infty).$ This clockwise orientation implies that $\frac{\partial \tilde  X^U_{\beta,c}}{\partial \beta}(m)$ is oriented as the $Z$-axis in comparison to the orbit of $\tilde  X^U_{\beta,c},$ at any  point $m$ of $C_c.$
	
	We consider the trajectory $\gamma_{\beta_0,c}( T)$  of   $\tilde X^U_{\beta_0,c}$  starting at any point $(T,0)$,  for $ T\in [T_1(c),T_0(c)].$  This trajectory is contained in the closure $\bar C_c$ and cuts the boundary $\partial C_c$ only at the starting point  $(T,0).$ Thus, a direct consequence of
	Lemma \ref{lem-rotating} is the following:
	\begin{corollary}\label{cor-rotating} 
		For $T\in [T_1(c),T_0(c)]$ and any $(\beta,c)$, it holds
		$\frac{\partial Z}{\partial \beta}(T,\beta,c)>0$.
	\end{corollary}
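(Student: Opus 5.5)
The plan is to apply Lemma \ref{lem-rotating} to the family $\varepsilon\mapsto \tilde X^U_{\beta+\varepsilon,c}$ (parameter $\varepsilon=\beta'-\beta$ near $0$) on $W=C_c$, with $p=(T,0)$ and $\Sigma$ the vertical line $\{T=T_i\}$ near the arrival point $q=(T_i,U(T,\beta,c))$. The orbit is then traced back from $U$ to $Z$ through the monotone map $U\mapsto Z=((1-\alpha)U)^{1/(1-\alpha)}$.

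First I would check the hypotheses of the lemma. The family is of class $[\frac{1}{1-\alpha}]\geqslant 1$ on a neighbourhood of $\overline{C_c}$ in $Q$. It is rotating on $C_c$: indeed
\[
{\rm Det}\Big(\tilde X^U_{\beta,c},\frac{\partial\tilde X^U_{\beta,c}}{\partial\beta}\Big)=\dot T\,\varphi(T),
\]
and this is nonzero on $C_c$, because $\varphi>0$ for $T\geqslant T_i$ and $\dot T<0$ strictly above the homocline. By Claim \ref{claim(iv)}, the positive orbit $\gamma_{\beta,c}(T)$ from $(T,0)$, for $T\in[T_1(c),T_0(c)]$, lies in $\overline{C_c}$, meets $\partial C_c$ only at its starting point, and reaches $\{T=T_i\}$ transversally in finite time. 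In particular it is not contained in $\partial C_c$. One subtlety is that the lemma is stated with $\Sigma$ through a point $q$ of the orbit, whereas here $q$ lies on the vertical $\{T=T_i\}$, which is part of the boundary of $Q$. I would handle this either by extending the field slightly to $T<T_i$, using a smooth extension of $\varphi$ (only the values along the orbit up to $\Sigma$ matter), or by taking $\Sigma$ as a transversal segment slightly inside $C_c$ and then composing with the flow map to $\{T=T_i\}$, which has positive derivative.

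Next I would fix the orientation. Parametrize $\Sigma$ by $s=U-U(T,\beta,c)$. The lemma then gives $P'(0)>0$ if $\partial_\beta \tilde X^U=\varphi(T)\partial_U$ at $q$ points in the increasing-$U$ direction, and this holds because $\varphi(T_i)>0$. In the Arrhenius case $\varphi(T_i)=e^{-T_a/T_i}>0$, and in the Heaviside case the value is $1$ for $T\geqslant T_i$. This is consistent with the clockwise rotation noted before the statement. It follows that $\partial U(T,\beta,c)/\partial\beta>0$. Since $Z=((1-\alpha)U)^{1/(1-\alpha)}$ with $U>0$ at $q$, and this map is a diffeomorphism with positive derivative, we get $\partial Z/\partial\beta>0$.

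I expect the main obstacle to be the endpoints $T=T_1(c)$ and $T=T_0(c)$. There the starting point $p$ lies on the homocline, where the rotating determinant vanishes, and the orbit is tangent to the vertical at $p$. The lemma, however, only needs the orbit to lie in $\overline W$ and not inside $\partial W$, with rotation on the open set $W$. In the flow-box proof, $B(x,0,0)\geqslant0$ with $B\not\equiv0$, and this holds because the orbit enters $C_c$ immediately. So the endpoints cause no real problem, and the same argument covers all $T\in[T_1(c),T_0(c)]$ and, at $c=\CJ$, the single point $T_0=T_1$.
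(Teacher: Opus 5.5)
Your proposal is correct and follows essentially the same argument as the paper. The paper also applies Lemma~\ref{lem-rotating} to the $\beta$-family $\tilde X^U_{\beta,c}$ on $W=C_c$ with $p=(T,0)$, using that the orbit lies in $\overline{C_c}$ and meets $\partial C_c$ only at $p$. Your determinant $\dot T\,\varphi(T)$, the orientation check at $q$, the passage from $U$ to $Z$, and the remark about the boundary $\{T=T_i\}$ make explicit what the paper leaves implicit.

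One caveat, which the paper shares, concerns the degenerate case $c\geqslant c_*$. Then $T_1(c)\leqslant T_i$, so $T=T_i$ lies in the interval. The orbit of $(T_i,0)$ leaves $Q$ at once, so $q=p$ and $Z(T_i,\beta,c)\equiv 0$; the paper itself uses $Z_1(\beta,c_*)=0$. Your claim that the argument covers every $T\in[T_1(c),T_0(c)]$ therefore holds only for $T>T_i$.
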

	
	Applying this to $T=T_1(c)$ and $T=T_0(c),$ we have the following result:
	\vskip5pt
	\begin{proposition}\label{prop-rotating}
		For $i=0,1$ and any fixed value of $c\in [\CJ,+\infty),$ the two functions $\beta\mapsto Z_i(\beta,c)$  are of class $\big[\frac{1}{1-\alpha}\big]$ in $(\beta,c).$ Moreover we have that 
		$$\frac{\partial Z_i}{\partial \beta}(\beta,c)>0$$
	\end{proposition}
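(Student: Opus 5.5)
The regularity claim is immediate. In the previous subsection the transition map $Z(T,\beta,c)$ was shown to be of class $\big[\frac{1}{1-\alpha}\big]$ on $\mathcal{Z}$. Since $T_0(c)=c+\sqrt{c^2-2q_0}$ and $T_1(c)=c-\sqrt{c^2-2q_0}$ are smooth for $c>\CJ$, the compositions $Z_i(\beta,c)=Z(T_i(c),\beta,c)$ inherit this class. I would note one subtlety at $c=\CJ$, where $\sqrt{c^2-2q_0}$ fails to be differentiable. However, only $\beta$-dependence at fixed $c$ is needed for the monotonicity, so I would state regularity in $\beta$ there and full joint regularity for $c>\CJ$. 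I would also note that the transition map is well defined at $T=T_1(c)$ and $T=T_0(c)$. Indeed, $T_1(c)\geqslant T_i$ is needed for $(T_1(c),0)\in Q$, and $T_0(c)<T(\beta,c)$ by construction.

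For the sign of $\partial Z_i/\partial\beta$, I would fix $c$ and $\beta_0$, set $\varepsilon=\beta-\beta_0$, and apply Lemma \ref{lem-rotating} to the family $\tilde X^U_{\beta,c}$ with $W=C_c$ and $p=(T_i(c),0)$. The starting point $p$ lies in $\partial C_c$. By Claim \ref{claim(iv)}, the positive orbit through $p$ enters $C_c$ immediately and stays there until it reaches $\{T=T_i\}$ at a point $q=(T_i,U_i(\beta_0,c))$ above $U_i(c)$. So $\gamma_0\subset\overline{W}$ and $\gamma_0\not\subset\partial W$. The hypotheses of Lemma \ref{lem-rotating} then hold.

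I would take $\Sigma$ to be the vertical line $\{T=T_i\}$ near $q$, parametrized by $s=U$. It is transverse to $\tilde X^U$ there because the horizontal component is strictly negative above $U_i(c)$. The rotation is clockwise, and $\partial_\beta\tilde X^U=\varphi(T)\partial_U$ points upward. So at $q$ the vector $\partial_\varepsilon X_\varepsilon(q)$ is oriented in the increasing-$U$ direction along $\Sigma$, and the lemma gives $P'(0)>0$, that is, $\partial U_i/\partial\beta>0$. This is exactly Corollary \ref{cor-rotating} specialized to $T=T_i(c)$. Since $Z=((1-\alpha)U)^{\frac{1}{1-\alpha}}$ is strictly increasing with positive derivative for $U>0$, we get $\partial Z_i/\partial\beta>0$.

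The main obstacle is the starting point $p$ itself. It is not a regular boundary point where $\tilde X^U$ crosses $\partial C_c$ transversally. Instead, it is where $h_c$ has a contact of order $\frac{1}{1-\alpha}$ with the vertical, and $\tilde X^U$ is tangent to $h_c$ there. One must therefore check the sign condition $B(x,0,0)\geqslant 0$ and $B\not\equiv 0$ in the flow-box chart used in the proof of Lemma \ref{lem-rotating}. I would handle this by observing that the determinant $\mathrm{Det}(\tilde X^U,\varphi\,\partial_U)=\dot T\,\varphi(T)$ has constant sign along the open arc of $\gamma_0$ inside $C_c$, where $\dot T<0$, and vanishes only at the endpoint. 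So $\int_0^1 B(x,0,0)\,dx>0$ still holds. Because $\tilde X^U$ is $C^1$ on all of $Q$, including $\{U=0\}$, the flow-box construction is valid there, and the case $c=\CJ$ is treated identically with $T_0=T_1$.
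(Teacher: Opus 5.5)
Your proposal is correct and follows the paper's route. Regularity comes from composing the transition map $Z(T,\beta,c)$ with $T_0(c)$ and $T_1(c)$, and positivity comes from the clockwise rotating property of $\tilde X^U_{\beta,c}$ in $\beta$ on $C_c$ via Lemma \ref{lem-rotating}, i.e.\ the corollary $\frac{\partial Z}{\partial \beta}(T,\beta,c)>0$ for $T\in[T_1(c),T_0(c)]$ evaluated at the two endpoints. Your caveat that joint regularity in $c$ breaks down at $c=\CJ$ (because of $\sqrt{c^2-2q_0}$) is a fair sharpening of the statement as written, and your check at the degenerate starting point is exactly the hypothesis the paper invokes, namely that the orbit meets $\partial C_c$ only at its starting point.
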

	\begin{figure}[htp]
		\begin{center}
			\includegraphics[scale=0.7]{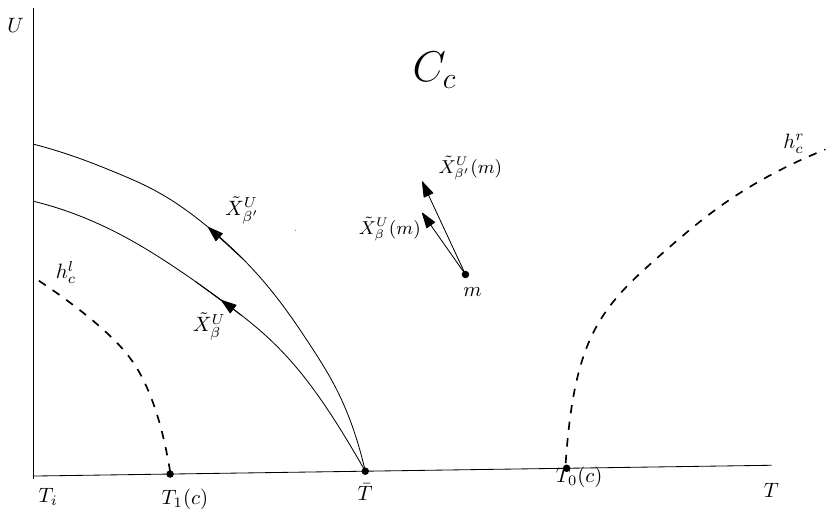}\\
			\caption{Rotating property in function of $\beta$ with $\beta'>\beta$.}\label{fig-rotating}
		\end{center}
	\end{figure}
	The vector field  $\frac{\partial \tilde  X^U_{\beta,c}}{\partial c}(m)=-T\frac{\partial}{\partial T}$ is transverse to  $\tilde X^U_{\beta,c}$   on the whole domain $Q.$
	Moreover,  the pair $\big(\tilde  X^U_{\beta,c}(m),\frac{\partial \tilde  X^U_{\beta,c}}{\partial c}(m)\big)$  has a direct orientation.  
	Then, the family   $\tilde X^U_{\beta,c}$  has the rotating property in the direct direction on the  open connected set $Q$, in function of the parameter $\varepsilon=c-c_0$ for any $c_0\in [\CJ,+\infty)$. This direct orientation implies that $\frac{\partial \tilde  X^U_{\beta,c}}{\partial c}(m)$ is oriented as the opposite direction of the  $Z$-axis in comparison to the orbit of $\tilde  X^U_{\beta,c},$ at any  point $m$ of $Q.$

	We consider the trajectory $\gamma_{\beta,c_0}( T)$  of   $ \tilde X^U_{\beta,c_0}$  starting at any point $(T,0)$,  for $ T\geqslant T_i.$  This trajectory is contained in $Q$ and cuts the boundary $\partial Q$ only at the starting point  $(T,0).$ Then, a direct consequence of
	Lemma \ref{lem-rotating} is the following:
	\begin{corollary}\label{cor-rotating} 
		For $T\geqslant T_i$ and any $(\beta,c)$ we have that: 
		$\frac{\partial Z}{\partial c}(T,\beta,c)<0.$
	\end{corollary}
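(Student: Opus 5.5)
The claim to prove is the second Corollary~\ref{cor-rotating}: $\frac{\partial Z}{\partial c}(T,\beta,c)<0$ for $T\geqslant T_i$. I would apply Lemma~\ref{lem-rotating} to the family $X_\varepsilon=\tilde X^U_{\beta,c_0+\varepsilon}$, with $\beta$ fixed, $W$ the interior of $Q$ (open and connected), and $p=(T,0)$. I take $q$ to be the point where $\gamma_{\beta,c_0}(T)$ meets the vertical $\{T=T_i\}$, and $\Sigma$ the vertical line near $q$, parametrized by $s=Z-Z(T,\beta,c_0)$. The function $P(\varepsilon)$ of the lemma is then $P(\varepsilon)=Z(T,\beta,c_0+\varepsilon)-Z(T,\beta,c_0)$, so $P'(0)=\frac{\partial Z}{\partial c}(T,\beta,c_0)$. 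Working in the $U$-coordinate and then composing with the increasing map $U\mapsto Z$ does not change the sign.

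The hypotheses of the lemma are checked as follows.
\begin{enumerate}[label=(\alph*)]
\item \emph{Regularity.} The family is of class $\big[\frac{1}{1-\alpha}\big]\geqslant 1$ in $(T,U,c)$ on a neighborhood of $Q$. Near $\{T=T_i\}$ I would use the smooth continuation of $\varphi$ from $[T_i,+\infty)$.
\item \emph{Rotating property.} We have $\frac{\partial \tilde X^U}{\partial c}=-T\frac{\partial}{\partial T}$. Hence
\[
\mathrm{Det}\Big(\tilde X^U,\frac{\partial\tilde X^U}{\partial c}\Big)=\beta\varphi(T)\,T>0
\]
on $Q$, since $\varphi>0$ for $T\geqslant T_i>0$.
\item \emph{Position of the orbit.} The orbit from $(T,0)$ lies in $\overline W$ and meets $\partial Q$ only at $p$ and at $q$. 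The $U$-component $\beta\varphi(T)$ is positive, so the orbit leaves $\{U=0\}$ immediately. Its arc between $p$ and $q$ is interior to $Q$, so it is not contained in $\partial W$.
\item \emph{Existence of the crossing.} For $(T,\beta,c)\in\mathcal Z$, the crossing of $\Sigma$ for nearby $\varepsilon$ follows from the definition of $\mathcal Z$ and from the transversality of the flow to $\{T=T_i\}$ at $q$. This transversality needs $U(q)>U_i(c)$ when $c<c_*$, which holds because the trajectory arrives on $\{T_i\}\times(U_i(c),+\infty)$.
\end{enumerate}

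For the sign, the lemma gives $P'(0)\neq 0$, with sign $+$ exactly when $\frac{\partial X}{\partial\varepsilon}(q)$ points toward increasing $s$ relative to the orbit. At $q$ the field is $\tilde X^U=(\dot T,\dot U)$ with $\dot U>0$, and the perturbation is $-T_i\,\partial_T$. The determinant is positive, so the pair is direct, as the text observes. With the flow going up and to the left at $q$, the perturbation $-T\,\partial_T$ sits on the side of lower $U$ relative to the orbit, i.e. it is oriented opposite to the $Z$-axis.

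To make this sign rigorous I would use the flow box normal form of the lemma's proof. Orient $y$ so that $\partial_y$ together with the flow direction forms a direct frame. Then $B>0$, and the perturbed orbit is displaced to the left of the flow direction. Relative to the upward flow at $q$, "left" corresponds to decreasing $U$ along $\Sigma$ only after accounting for the flow's leftward tilt. So I would rather check the crossing directly: $\mathrm{Det}(v,w)$ with $v$ the flow direction and $w=\partial_U$ gives $\dot T$. This is negative on the side where the trajectory arrives, so the $s$-direction is opposite to the perturbation side. Hence $P'(0)<0$, and therefore $\frac{\partial Z}{\partial c}<0$.

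The main obstacle is exactly this orientation bookkeeping. I would also handle the endpoint cases separately. For $T=T_i$, $p$ lies on $\Sigma$ itself; there I take $q$ at the first return, or argue by continuity and note that $P'(0)\neq 0$ for all $T$ keeps the sign constant. For the degenerate points $T=T_j(c)$, tangency of $h_c$ is irrelevant, since the argument uses only the transversality of $\tilde X^U$ to $\{U=0\}$.
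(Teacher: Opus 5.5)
Your argument is correct and is essentially the paper's. You apply Lemma~\ref{lem-rotating} to the family in $c$, use $\mathrm{Det}(\tilde X^U,\partial_c\tilde X^U)=\beta\varphi(T)\,T>0$, and compare it with $\mathrm{Det}(\tilde X^U,\partial_U)=\dot T<0$ at the arrival point $q$. This shows that the perturbation lies on the side of decreasing $Z$ along $\{T=T_i\}$. Your bookkeeping is actually more accurate than the paper's wording: the orbit meets $\partial Q$ at both $p$ and $q$, it crosses $\{T=T_i\}$ transversally because $U_q>U_i(c)$, and $dZ/dU>0$ for $U>0$.

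The one thing to correct is your remark on the endpoint $T=T_i$.
\begin{itemize}
\item Since $P_c(T_i)=T_i(c_*-c)$, for $c\geqslant c_*$ the orbit through $(T_i,0)$ leaves $Q$ at once.
\item Hence $Z(T_i,\beta,c)=0$ for all $c\geqslant c_*$, and $\frac{\partial Z}{\partial c}(T_i,\beta,c)=0$ for $c>c_*$.
\item There is no return to $\{T=T_i\}$, so the hypothesis $q\neq p$ of Lemma~\ref{lem-rotating} fails. A continuity argument in $T$ can only give $\frac{\partial Z}{\partial c}\leqslant 0$ there.
\end{itemize}
So the strict inequality holds for $T>T_i$, and at $T=T_i$ only for $c<c_*$, where your first-return argument is valid. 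This restriction still covers the points that matter, namely $T=T_1(c)$ for $c<c_*$ and $T=T_0(c)$.
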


	\section{Bifurcation diagram in the $(\beta,c)$-space}\label{bifurcation_diagrams}
	
	We can now deduce from the previous results a diagram of bifurcation for the solution.
	\subsection{The bifurcation curves $\beta=\beta_1(c)$ and $\beta=\beta_0(c)$.}
	\begin{theorem}\label{th-beta0(c)}
		There exists a  function  $\beta_0(c) :[\CJ,+\infty)\rightarrow \R^+,$ 
		of class $\big[\frac{1}{1-\alpha}\big],$ such that:
		
		\begin{enumerate}[label=(\roman*), wide, labelwidth=!, labelindent=0pt]
			
			\item there is a special solution (i.e., a finite-time solution) for $\beta=\beta_0(c)$;
			\item there is a non-monotonic solution with a bump for $0<\beta<\beta_0(c)$. 
		\end{enumerate}
		
	\end{theorem}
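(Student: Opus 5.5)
For each fixed $c\geqslant \CJ$ I will study the scalar function $\beta\mapsto Z_0(\beta,c)$ on $(0,+\infty)$. The solution through $(T_i,1)$ is of finite-time type exactly when $Z_0(\beta,c)=1$, and of bump type exactly when $Z_0(\beta,c)<1$; this follows from Claim \ref{claim_solution}, together with the discussion of the Chapman-Jouguet case in Subsection \ref{case_CJ}, where $Z_0(\beta,\CJ)=Z_1(\beta,\CJ)$. So it is enough to show that the equation $Z_0(\beta,c)=1$ has a unique root $\beta_0(c)$, that this root depends on $c$ with the stated regularity, and that $Z_0<1$ to its left.

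\emph{Existence and uniqueness for fixed $c$.} By Proposition \ref{prop-beta0}, $Z_0(\beta,c)<1$ for $\beta>0$ small enough. By Proposition \ref{prop-betainfty}, $Z_0(\beta,c)\to+\infty$ as $\beta\to+\infty$. By Proposition \ref{prop-rotating}, $\beta\mapsto Z_0(\beta,c)$ is continuous and satisfies $\partial Z_0/\partial\beta>0$. The intermediate value theorem therefore gives a root $\beta_0(c)$, and strict monotonicity makes it unique. Monotonicity also gives $Z_0(\beta,c)<1$ for $0<\beta<\beta_0(c)$, and in this range Claim \ref{claim_solution} yields a bump solution, which proves (ii). For (i), at $\beta=\beta_0(c)$ the orbit through $(T_i,1)$ is $\gamma_0$. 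It reaches $(T_0(c),0)$ in finite negative time, after which $T\equiv T_0(c)$, so this is the special solution. The condition \eqref{S} is needed here: if $T_1(c)>T_i$, then $Z_1(\beta_0(c),c)<Z_0(\beta_0(c),c)=1$, because $\partial Z/\partial T>0$ by \eqref{eq-PartialZ-PartialT} and $T_1(c)<T_0(c)$.

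\emph{Regularity in $c$.} The map $Z_0(\beta,c)=Z(T_0(c),\beta,c)$ is of class $\big[\frac{1}{1-\alpha}\big]\geqslant 1$ jointly in $(\beta,c)$. This uses that $Z$ is of that class on $\mathcal Z$ and that $T_0(c)$ is smooth for $c>\CJ$. The implicit function theorem applied to $Z_0(\beta,c)-1=0$, with $\partial_\beta Z_0\neq0$, then shows that $\beta_0$ is of class $\big[\frac{1}{1-\alpha}\big]$ on $(\CJ,+\infty)$, and it agrees with the pointwise root found above by uniqueness.

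\emph{Main obstacle: the endpoint $c=\CJ$.} There $T_0(c)=c+\sqrt{c^2-2q_0}$ is not differentiable, so the composition $Z(T_0(c),\beta,c)$ need not be $C^1$ in $c$ at $\CJ$. I would handle this by not using $T_0(c)$ as a parameter near $\CJ$. Instead, I would characterize $\gamma_0$ intrinsically as the orbit of the $\tilde X^U$-family through the point where the homocline $h_c$ meets $\{U=0\}$. Near $\CJ$, I would parametrize this point by $s=\sqrt{c-\CJ}$, or treat the two roots $T_{0,1}$ jointly as $c=\frac{T^2+2q_0}{2T}$, which is a smooth function of $T$. I would then solve for $\beta$ in terms of this smooth parameter. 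At a minimum this gives one-sided differentiability at $c=\CJ$, with continuity following from continuity of $Z$. If full class $\big[\frac{1}{1-\alpha}\big]$ at the endpoint cannot be reached this way, I would fall back on reading the regularity on the closed half-line as regularity of the curve $(B)$ parametrized by $T$, which is the viewpoint of Theorem \ref{prop-global_intro2}.
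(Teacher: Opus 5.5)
Your core argument is the paper's: Propositions \ref{prop-beta0}, \ref{prop-betainfty} and \ref{prop-rotating} give, for each fixed $c$, a unique root of $Z_0(\beta,c)=1$ with $Z_0<1$ to its left. The implicit function theorem then gives regularity on $(\CJ,+\infty)$, and (i) and (ii) are read off from the phase portrait. Your reading of Claim \ref{claim_solution} is the intended one.

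The paper's proof applies the implicit function theorem on the closed half-line without comment, so your concern about the endpoint is justified. Your expectation there is wrong, however: at $c=\CJ$ you do not get even one-sided differentiability.

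Put $c=\CJ+s^2$. Then $T_0=\CJ+s^2+s\sqrt{2\CJ+s^2}$ is smooth in $s$; for $s<0$ the same formula gives $T_1$, so this parametrization covers both branches of $(B)$. Differentiate $Z(T_0,\beta,c)=1$ at $s=0$, where $dc/ds=0$. This gives
\[
\frac{d\beta_0}{ds}(0)=-\sqrt{2\CJ}\,\frac{\partial Z/\partial T}{\partial Z/\partial \beta}<0,
\]
by \eqref{eq-PartialZ-PartialT} and the rotating property in $\beta$. Hence $\beta_0(c)-\beta^{\mathit{cj}}\sim -k\sqrt{c-\CJ}$ with $k>0$, and $\beta_0'(c)\to-\infty$ as $c\to\CJ$. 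The paper itself records this just before Proposition \ref{prop-c-infty}.

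So $\beta_0$ is of class $\big[\frac{1}{1-\alpha}\big]$ on $(\CJ,+\infty)$ and only continuous at $\CJ$. The regularity claimed on $[\CJ,+\infty)$ cannot hold literally. It can only mean regularity of the curve $(B)$ in the parameter $T$ (or $s$), as in Proposition \ref{prop-global}. Your fallback is therefore not optional; it is the only correct reading of the endpoint claim.

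A smaller slip: at $c=\CJ$ your inequality $Z_1<Z_0$ fails, since $T_1=T_0$. This does not matter, because \eqref{S} is not needed for (i): when $Z_0(\beta,c)=1$, the orbit through $(T_i,1)$ is $\gamma_0$ itself, which reaches $(T_0(c),0)$ in finite time.
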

	\begin{proof}
		Take any $c \geqslant\CJ.$ From Proposition \ref{prop-beta0}, we know that
		$Z_0(\beta,c)<1$ is $\beta>0$ is small enough. From Proposition \ref{prop-rotating} we know, that $Z_0(\beta,c)$ is of class  $\big[\frac{1}{1-\alpha}\big]$  with a positive partial derivative in function of $\beta$,  and from Proposition 
		\ref{prop-betainfty} we know that $Z_0(\beta,c)\rightarrow +\infty$ if $\beta\rightarrow +\infty.$  Then, by the implicit function theorem, there exists a unique function  
		$\beta_0(c)$ of class  $\Big[\frac{1}{1-\alpha}\Big]$ such that $Z_0(\beta_0(c),c)=1$. 
		Clearly, if $0<\beta<\beta_0(c),$ we have that $Z_0(\beta,c)<1$ and  there is a solution of bump type.
	\end{proof}
	
	\begin{theorem}\label{th-beta1(c)}
		(i) Let $c\geqslant c_{\star}=\frac{T_i^2+2q_0}{T_i}$, there is a solution  for any $\beta>0$ and, if $\beta > \beta_0(c)$, this solution  is of monotonic type;\\
		(ii) there exists a  function 
		$\beta_1(c) :\big[\CJ,c_{\star}\big)\rightarrow \R^+,$ of class $\big[\frac{1}{1-\alpha}\big],$ with the following properties: 
		\begin{enumerate}[label=(\alph*)]
			\item For  all $c\in (c^{c_j},c_*)$ we have that 
			$\beta_0(c)<\beta_1(c)$  and $\beta_0(c^{c_j})=\beta_1(c^{c_j}).$ We call
			$\beta^{c^{c_j}}$ this common value;
			
			\item for $\beta_0(c)<\beta<\beta_1(c)$, there is a solution of  monotonic type;  
			
			\item there is no solution if $c\in\big[\CJ,c_{\star}\big)$ and 
			$\beta\geqslant \beta_1(c).$ 
			
			\item  $\beta_1'(c)>0$ for $c\in \big(\CJ,c_{\star}\big).$ Moreover $\beta_1(c)\rightarrow +\infty$ when $c \rightarrow c_{\star}$. 
		\end{enumerate}
	\end{theorem}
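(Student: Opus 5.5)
The plan is to mirror the proof of Theorem \ref{th-beta0(c)}, this time using the function $Z_1(\beta,c)$ together with the solvability criterion of Claim \ref{claim_solution}.

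\textbf{Part (i).} Lemma \ref{lem-c-large} gives $T_1(c)\leqslant T_i$ when $c\geqslant c_\star$, so condition \eqref{S} holds and a solution exists for every $\beta>0$. By Claim \ref{claim_solution} its type is fixed by the position of $1$ relative to $Z_0(\beta,c)$. Proposition \ref{prop-rotating} says $\beta\mapsto Z_0(\beta,c)$ is strictly increasing, and $Z_0(\beta_0(c),c)=1$. Hence $Z_0(\beta,c)>1$ for $\beta>\beta_0(c)$, which gives the monotonic type. Since the saddle lies outside $Q$, no lower bound $Z_1<1$ is needed.

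\textbf{Part (ii), construction of $\beta_1$.} Fix $c\in[\CJ,c_\star)$, so that $T_1(c)>T_i$ and $Z_1(\beta,c)$ is defined. Proposition \ref{prop-rotating} gives $\partial_\beta Z_1>0$, and Proposition \ref{prop-betainfty} gives $Z_1\to+\infty$ as $\beta\to\infty$. For small $\beta$ I would use $Z_1(\beta,c)<Z_0(\beta,c)<1$, by Proposition \ref{prop-beta0}. Here the inequality $Z_1<Z_0$ comes from \eqref{eq-PartialZ-PartialT}, since $T_1(c)<T_0(c)$ for $c>\CJ$, with equality at $c=\CJ$. The implicit function theorem then produces a unique $\beta_1(c)$ of class $[\frac{1}{1-\alpha}]$ with $Z_1(\beta_1(c),c)=1$.

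\textbf{Properties (a)–(c).} For (a), note that $1=Z_1(\beta_1(c),c)<Z_0(\beta_1(c),c)$ when $c>\CJ$. Monotonicity of $Z_0$ in $\beta$ then gives $\beta_0(c)<\beta_1(c)$. At $c=\CJ$ we have $Z_0=Z_1$, so $\beta_0(\CJ)=\beta_1(\CJ)$. For (b), the inequalities $Z_1<1<Z_0$ hold exactly on $\beta_0(c)<\beta<\beta_1(c)$, and Claim \ref{claim_solution} gives the monotonic type there. For (c), $\beta\geqslant\beta_1(c)$ gives $Z_1(\beta,c)\geqslant 1$ while $T_1(c)>T_i$, so \eqref{S} fails and there is no solution.

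\textbf{Property (d).} Differentiating $Z_1(\beta_1(c),c)=1$ gives
\[
\beta_1'(c)=-\frac{\frac{d}{dc}\big[Z(T_1(c),\beta,c)\big]}{\partial_\beta Z_1}.
\]
The numerator is $\partial_T Z\cdot T_1'(c)+\partial_c Z$. Here $T_1'(c)=1-c/\sqrt{c^2-2q_0}<0$, together with $\partial_T Z>0$ and $\partial_c Z<0$ from Corollary \ref{cor-rotating}, so the numerator is negative. Since $\partial_\beta Z_1>0$, this yields $\beta_1'>0$ on $(\CJ,c_\star)$.

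\textbf{Main obstacle.} I expect the hardest step to be the limit $\beta_1(c)\to+\infty$ as $c\to c_\star$. As $c\to c_\star$ we have $T_1(c)\to T_i$, and I must show that $\beta_1$ stays unbounded. I would argue by contradiction. Suppose $\beta_1(c_n)\to\bar\beta<\infty$ along a sequence $c_n\to c_\star$. The orbit $\gamma_1(\bar\beta_n,c_n)$ starts at $(T_1(c_n),0)$ and must reach $\{T=T_i\}$ at height $Z=1$. By continuity of the flow of $\tilde X^U$ (which is regular up to $c_\star$), these orbits converge to the orbit of $\tilde X^U_{\bar\beta,c_\star}$ through $(T_i,0)$. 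That limit orbit is tangent to $\{T=T_i\}$ at height $U_i(c_\star)=0$ and lies in $\{T<T_i\}$ just above, so it cannot reach the height $Z=1$. Its intersection height with $\{T=T_i\}$ therefore tends to $0$, contradicting $Z=1$.

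The delicate point in this argument is making the continuity rigorous near the quadratic-contact point. To handle it, I would rely on the transversality given by Lemma \ref{lem-transverse}, combined with the fact that the arc from $T_1(c_n)$ to $T_i$ shrinks to a point.
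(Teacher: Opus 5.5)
Your proposal is correct. For (i), (a)--(c) and the sign of $\beta_1'$ it is the paper's argument; your derivation of $Z_1<Z_0<1$ for small $\beta$ from \eqref{eq-PartialZ-PartialT} and Proposition~\ref{prop-beta0} just spells out the paper's ``same arguments as in Theorem~\ref{th-beta0(c)}''.

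The only real difference is the limit $\beta_1(c)\to+\infty$. The paper fixes $\beta>\beta^{\mathit{cj}}$ and uses three facts: $Z_1(\beta,\CJ)>1$, the boundary value $Z_1(\beta,c_*)=0$, and $\frac{\partial Z_1}{\partial c}=\frac{\partial Z}{\partial T}T_1'+\frac{\partial Z}{\partial c}<0$. The intermediate value theorem then gives an inverse branch $c_1(\beta)$ with $\beta_1(c_1(\beta))=\beta$, hence $\beta_1\geqslant\beta$ on $[c_1(\beta),c_*)$. This exhibits the branch as a graph over $\beta$. However, the paper simply asserts $Z_1(\beta,c_*)=0$. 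At $c=c_*$ the orbit of $(T_1(c_*),0)=(T_i,0)$ is tangent to $\{T=T_i\}$, so continuity of $Z_1$ up to $c_*$ is not covered by the transversality argument that defines the transition map. Your compactness argument proves exactly this degeneration, so on this point it is more complete than the paper.

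The ``delicate point'' needs less than you expect. In particular, Lemma~\ref{lem-transverse} is a fixed-parameter asymptotic with no uniformity, so it is not the right tool. Instead:
\begin{enumerate}[label=(\arabic*)]
\item Extend $\varphi$ to a positive $C^1$ function slightly below $T_i$. This does not change any orbit before its first hitting of $\{T=T_i\}$.
\item Along the orbit of $\tilde X^U_{\bar\beta,c_*}$ from $(T_i,0)$, the variable $x=T-T_i$ solves $\dot x=a(t)x-q_0\big((1-\alpha)U(t)\big)^{\frac{1}{1-\alpha}}$ with $a$ continuous, $x(0)=0$ and $U(t)>0$ for $t>0$. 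By variation of constants, $x(t)<0$ for small $t>0$.
\item Continuous dependence on a fixed interval $[0,t_0]$ then makes the $n$-th orbit hit $\{T=T_i\}$ before time $t_0$, at a height $U\leqslant Ct_0$ with $C$ independent of $n$. Since $t_0$ is arbitrary, $Z_1(\beta_1(c_n),c_n)\to0$, which gives your contradiction.
\end{enumerate}
This uses $\bar\beta>0$: for $\bar\beta=0$ the point $(T_i,0)$ is a zero of the limit field. You have it because $\beta_1$ is increasing, so $\bar\beta\geqslant\beta^{\mathit{cj}}$. Alternatively, run the same argument at a fixed $\beta>0$ to get $Z_1(\beta,c)\to0$ as $c\to c_*$. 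Then $Z_1(\beta,c)<1$ near $c_*$ gives $\beta_1(c)>\beta$ directly, with no sequence.

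Two further points are inherited from the statement itself.
\begin{itemize}
\item $T_1(c)>T_i$ holds exactly for $c<c_*=\frac{T_i^2+2q_0}{2T_i}$. So your ``so that $T_1(c)>T_i$'' and ``$T_1(c)\to T_i$'' require reading $c_\star$ as $c_*$, as the paper's proof does.
\item The class $\big[\frac{1}{1-\alpha}\big]$ for $\beta_1$ holds only on $(\CJ,c_*)$. Your formula for $\beta_1'$, together with $T_1'(c)\to-\infty$, gives $\beta_1'(c)\to+\infty$ as $c\to\CJ$, consistent with Proposition~\ref{prop-global}.
\end{itemize}
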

	\begin{proof}
		The point $(i)$ is a direct consequence of Lemma \ref{lem-c-large} and the fact that  if 
		$\beta > \beta_0(c)$, then $1<Z_0(\beta,c).$  
		
		To prove the existence of the function 
		$\beta_1(c)$ with properties $(a),(b)$ and $(c)$, we use the same arguments as in Theorem \ref{th-beta0(c)}. That $\beta_0(c)<\beta_1(c)$ for any 
		$c\in (c^{c_j},c_*)$ comes from the fact that,  if $\beta$ increases,  then the point $Z_0(c)$ crosses the value $1$ before the point $Z_1(c).$ 
		For $c=c^{c_j},$ we have that $Z_0(c^{c_j})=Z_1(c^{c_j})$ and then that
		$\beta_0(c^{c_j})=\beta_1(c^{c_j}).$ 
		
		In order to prove the first claim of $(d)$,  we use that the function 
		$\beta_1(c)$ is an implicit solution of the equation:
		$Z_1(\beta_1(c),c)=Z(T_1(c),\beta_1(c),c)= 1.$ Differentiating it gives that 
		$$\frac{\partial Z}{\partial T}T'_1(c)+\frac{\partial Z}{\partial c}+\frac{\partial Z}{\partial \beta}\beta'_1(c)=0,$$
		where the partial derivatives are evaluated at $(T_1(c),\beta_1(c),c)$.
		We have that $\frac{\partial Z}{\partial T}>0,$ $\frac{\partial Z}{\partial c}<0,$
		$\frac{\partial Z}{\partial \beta}>0$ and $T'_1(c)<0.$ Then 
		$\beta_1'(c)>0$ for $c\in (\CJ,c_*).$  
		
		We now prove the second claim of $(d)$. 
		Take any value $\beta>\beta^{\mathit{cj}}.$ As $\beta^{\mathit{cj}}=\beta_1(\CJ),$ we know that
		$Z_1(\beta,\CJ)>1.$ On the other side, we have that $Z_1(\beta,c_*)=0.$ 
		As above, we have that 
		$$\frac{\partial Z_1}{\partial c}=\frac{\partial Z}{\partial T}T'_1(c)+\frac{\partial Z}{\partial c}<0.$$
		Then, there exists a unique $c_1(\beta)$ such that $\beta_1(c_1(\beta))=\beta$ (this means that the function $\beta\mapsto c_1(\beta)$ is the inverse of the function $\beta_1(c)$). Now, if 
		$c\in [c_1(\beta),c_*)$ we have that $\beta_1(c)\geqslant \beta$.  As this is true for any $\beta > \beta^{\mathit{cj}}$, this means that $\beta_1(c)\rightarrow +\infty$ when $c\rightarrow c_*.$ 
	\end{proof}
	\subsection{Behaviour near $c=\CJ$.}
	Let us recall that $\beta_0(c^{c_j})=\beta_1(c^{c_j})$ and that we write $\beta^{\mathit{cj}}$ this common value.
	\begin{proposition}\label{prop-global}
		The  union of the graphs of $\beta_0$ and $\beta_1$ with the point
		$(\beta^{\mathit{cj}},\CJ) $ is a simple curve $(B)$ in the parameter space, of class $\big[\frac{1}{1-\alpha}\big],$  diffeomorphic to $\R$  and tangent to the axis $\{c=\CJ\}.$ This contact is quadratic if $\alpha\geqslant \frac{1}{2}.$ 
	\end{proposition}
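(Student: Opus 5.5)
The idea is to glue the two branches $\beta_0$ and $\beta_1$ into one branch by a single signed parameter that passes smoothly through the Chapman--Jouguet point. Near $c=\CJ$ the parameter $c$ is a bad coordinate, because $T_0(c)$ and $T_1(c)$ have a square-root singularity there. I therefore introduce
\[
s=\pm\sqrt{c^2-2q_0},\qquad c=c(s):=\sqrt{2q_0+s^2},\qquad T(s):=c(s)+s .
\]
Then $T(s)=T_0(c)$ for $s\geqslant 0$ and $T(s)=T_1(c)$ for $s\leqslant 0$. The maps $s\mapsto c(s)$ and $s\mapsto T(s)$ are real-analytic, with $c'(0)=0$, $c''(0)=1/\sqrt{2q_0}>0$ and $T'(0)=1$. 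The range of $s$ is $s\in(-s_*,+\infty)$, where $s_*=\sqrt{c_*^2-2q_0}$, so that $T(s)>T_i$ on this range.

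\textbf{Step 1: one defining function.} Using the transition map $Z(T,\beta,c)$ of class $\big[\frac{1}{1-\alpha}\big]$ on $\mathcal{Z}$, I set
\[
G(\beta,s)=Z\big(T(s),\beta,c(s)\big)-1 .
\]
It is of class $\big[\frac{1}{1-\alpha}\big]$ in $(\beta,s)$, including across $s=0$. For $s>0$, $G=0$ is the equation $Z_0(\beta,c)=1$, and for $s<0$ it is $Z_1(\beta,c)=1$. Since $T(s)\in[T_1(c),T_0(c)]$, Corollary \ref{cor-rotating} gives $\partial_\beta G>0$. Propositions \ref{prop-beta0} and \ref{prop-betainfty} provide a sign change in $\beta$, since $Z_0\geqslant Z_1$.

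The implicit function theorem then yields a unique $b(s)$ of class $\big[\frac{1}{1-\alpha}\big]$ with $G(b(s),s)=0$, namely $b(s)=\beta_0(c(s))$ for $s\geqslant0$ and $b(s)=\beta_1(c(s))$ for $s\leqslant 0$. Hence
\[
(B)=\{(b(s),c(s)) : s\in(-s_*,+\infty)\}.
\]

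\textbf{Step 2: simple curve, diffeomorphic to $\R$.} The parametrization is injective. For $s\neq s'$ with $c(s)=c(s')$ we must have $s'=-s\neq 0$, and then $b(s)=\beta_0(c)<\beta_1(c)=b(-s)$ by Theorem \ref{th-beta1(c)}(a). Next I show that $s\mapsto(b(s),c(s))$ is proper onto its image. As $s\to+\infty$, $c(s)\to\infty$. As $s\to -s_*$, $b(s)=\beta_1(c(s))\to+\infty$ by Theorem \ref{th-beta1(c)}(d).

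For immersion and tangency I compute
\[
b'(0)=-\frac{\partial_s G}{\partial_\beta G}, \qquad \partial_s G(b(0),0)=\frac{\partial Z}{\partial T}\,T'(0)+\frac{\partial Z}{\partial c}\,c'(0)=\frac{\partial Z}{\partial T}>0 .
\]
This uses \eqref{eq-PartialZ-PartialT}, and gives $b'(0)<0$. So the curve is regular at $s=0$, and its tangent there is $(b'(0),0)$, i.e.\ parallel to the $\beta$-axis and hence tangent to the line $\{c=\CJ\}$. Away from $s=0$, $c'(s)\neq0$, so the curve is regular there as well. Thus $(B)$ is an injective proper immersion of an interval, i.e.\ a simple curve diffeomorphic to $\R$.

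\textbf{Step 3: quadratic contact.} Since $b'(0)\neq 0$, near $s=0$ I can invert $\beta=b(s)$ as $s=\sigma(\beta)$ and define $\tilde c(\beta)=c(\sigma(\beta))$. Then $\tilde c'(\beta^{\mathit{cj}})=c'(0)\sigma'=0$. Moreover
\[
\tilde c''(\beta^{\mathit{cj}})=\frac{c''(0)}{b'(0)^2}>0,
\]
provided $b$ (and hence $\sigma$) is $C^2$. The class $\big[\frac{1}{1-\alpha}\big]$ is at least $2$ exactly when $\alpha\geqslant\frac12$, which is where this hypothesis enters.

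\textbf{Main obstacle.} The delicate point is Step 1 at $s=0$. I must check that the transition map $Z(T,\beta,c)$ really is jointly of class $\big[\frac{1}{1-\alpha}\big]$ in a full neighborhood of $(T,\beta,c)=(\sqrt{2q_0},\beta^{\mathit{cj}},\CJ)$. This requires extending $\tilde X^U_{\beta,c}$ slightly to $c<\CJ$: it is polynomial in $c$, and the transversality to $\{U=0\}$ persists there. With that extension, the Cauchy and implicit function theorems apply to $G$ uniformly across the saddle-node value $c=\CJ$, even though the equilibria themselves disappear for $c<\CJ$.
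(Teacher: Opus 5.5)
Your proposal is correct and is essentially the paper's argument. The paper parametrizes $(B)$ near $(\beta^{\mathit{cj}},\CJ)$ by $T$ along the hyperbola $c(T)=\frac{1}{2}T+\frac{q_0}{T}$, and your parameter is simply $s=\frac{1}{2}T-\frac{q_0}{T}$; from there both proofs use the rotating property ($\partial Z/\partial\beta>0$), the vanishing of $dc$ at the Chapman--Jouguet point, $\partial Z/\partial T>0$ to get a nonzero derivative of $\beta$ along the curve, and the second-derivative computation for the inverse when the class is at least $2$.

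Your global injectivity/properness argument and your remark about extending $\tilde X^U_{\beta,c}$ slightly past $c=\CJ$ fill in details the paper leaves implicit. The extension is what gives joint regularity of $Z$ at that boundary point; the paper only says a local check near the CJ point suffices.
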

	\begin{proof}
		As the graphs of $\beta_0$ and $\beta_1$ are simple curves above for $c>\CJ,$ it is sufficient to prove that $(B)$ is also a simple curve in a small neighborhood of the point 
		$(\beta^{\mathit{cj}},\CJ)$.
		
		The equation of $(B)$ is given by the elimination of $T$ between the two equations: 
		\begin{equation}\label{elimination} 
			\left\{
			\begin{array}{l}
				P_c(T)=-cT+\frac{1}{2}T^2+q_0 =0, \\[0.5mm]
				Z(T,\beta,c)-1=0.
			\end{array}
			\right.
		\end{equation}
		It follows from Corollary \ref{cor-rotating},   applied to the limit case $c=\CJ,$ 
		that $$\frac{\partial Z}{\partial \beta}(\CJ,\beta,\sqrt{2q_0})>0.$$
		Then, for 
		$(T,\beta,c)\sim (\sqrt{2q_0},\beta^{\mathit{cj}},\CJ)$, we can solve the second equation for an implicit function 
		$\beta=\beta(T,c)$ with $\beta(\CJ,\CJ)=\beta^{\mathit{cj}},$ 
		defined in a neighborhood $W$ of $(T,c)=(\sqrt{2q_0},\CJ).$
		As $\frac{\partial Z}{\partial T}+\frac{\partial Z}{\partial \beta}\frac{\partial \beta}{\partial T}=0$, we can choose $W$ small enough  such that $\frac{\partial Z}{\partial \beta}>0$  and then such that 
		$\frac{\partial \beta}{\partial T}(T,c)< 0$ on $W$. 
		
		On the other side, it is easy to verify that the first equation defines  for $c\geqslant \CJ$ an arc of  hyperbola, which is a graph of the function  
		$$c(T)=\frac{1}{2}T+\frac{q_0}{T}$$   
		restricted to $T\in [T_i,+\infty).$ This function, $c(T)$, has a minimum at $T=\sqrt{2q_0}.$ Finally, $(B)$ near the point $(\beta^{\mathit{cj}},\CJ)$ is a $T$-parametrized curve:
		\begin{equation}\label{B_parametrized} 
			(B):=
			\left\{
			\begin{array}{l}
				c=c(T), \\ [0.5mm]
				\tilde\beta(T)=\beta(T,c(T)).
			\end{array}
			\right.
		\end{equation}
		This curve passes through  the point $(\sqrt{2q_0},\bar \beta)$ for $T=\sqrt{2q_0}.$
		We have that $\tilde\beta'(T)=\frac{\partial \beta}{\partial T}(T,c(T))+\frac{\partial \beta}{\partial c}(T,c(T))c'(T)$.
		In particular, for $T=\sqrt{2q_0},$ we have that $c'(\sqrt{2q_0})=0$ and 
		$\frac{\partial \beta}{\partial T}(\sqrt{2q_0},\sqrt{2q_0})<0$. 
		Then, $\tilde\beta'(\sqrt{2q_0})<0,$ which proves that $(B)$ is a simple curve in a neighborhood of $(\sqrt{2q_0},\bar \beta).$ 
		
		If $\alpha\geqslant 2$, all the functions are at least of class $C^2.$  Then, we can invert the function $\tilde \beta (T)$ as a function $T(\beta)$ of class   $\mathcal{C}^2,$ such that $T(\beta^{\mathit{cj}})=\sqrt{2q_0}$ and $T'(\beta^{\mathit{cj}})<0.$
		Locally, the curve $(B)$ is the graph of the function $\tilde c(\beta)=\beta\ c(T(\beta))$. We have that 
		$$\tilde c'(\beta)= c'(T(\beta))T'(\beta)\ \ {\rm  and}\  \    
		\tilde c''(\beta)=c''(T(\beta))(T'(\beta))^2+c'(T(\beta))T''(\beta).$$
		For $\beta=\beta^{\mathit{cj}}$,  this gives that $ c'(T(\beta^{\mathit{cj}})=0.$ Then, 
		$$\tilde c'(\beta^{\mathit{cj}})=0 \  \  {\rm and}\  \
		\tilde c''(\beta^{\mathit{cj}})=c''(T(\beta^{\mathit{cj}})(T'(\beta^{\mathit{cj}}))^2>0,$$
		which means that the function $\tilde c$ has a quadratic minimum at $\beta^{\mathit{cj}}$. 
	\end{proof}

	\subsection{Behavior of $\beta_0(c)$ for $c\rightarrow +\infty$.}\label{large}
	In general, little is known  about the behavior of the function 
	$\beta_0(c)$ in the large.
	The simple proof given for $\beta_1(c)$ does not work in an obvious way, since we do not know the sign of the term $\frac{\partial Z}{\partial T}T'_0(c)+\frac{\partial Z}{\partial c}$ for all $c.$ Here, we introduce an additional technical assumption which is obviously verified by the Heaviside function and the Arrhenius law:\\
	\noindent{\bf Hypothesis $(H_\infty)$:} {\it assume that the function 
		$\psi(V)=\varphi(\frac{1}{V})$ is smooth at $V=0$ with a value $\psi(0)=A>0.$} 
	
	We  know that
	$\beta'_0(c)\rightarrow -\infty$ for $c\rightarrow \CJ$ as consequence of Proposition \ref{prop-global}.  The following result is about the asymptotic behavior of $\beta_0(c)$ at infinity:
	\begin{proposition}\label{prop-c-infty}
		Assume that  $(H_\infty)$ is verified. If $c\rightarrow +\infty,$ then $\beta_0(c)\rightarrow +\infty$.
	\end{proposition}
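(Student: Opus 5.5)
We prove the equivalent statement: \emph{for every fixed $\beta>0$, $Z_0(\beta,c)<1$ as soon as $c$ is large enough.} This suffices. By Proposition \ref{prop-rotating}, $\beta\mapsto Z_0(\beta,c)$ is strictly increasing, and by Theorem \ref{th-beta0(c)}, $Z_0(\beta_0(c),c)=1$. Hence $Z_0(\beta,c)<1$ is equivalent to $\beta<\beta_0(c)$. So for each fixed $\beta$ we get $\beta_0(c)>\beta$ for large $c$, that is, $\beta_0(c)\to+\infty$.

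\textbf{Rescaling to a slow-fast system in $\varepsilon=1/c$.} Put $T=cs$, $\varepsilon=1/c$ and $\sigma=c\tau$ in \eqref{eq-tildeXU}. This gives
\begin{equation*}
\frac{ds}{d\sigma}=\frac{s^2}{2}-s+\varepsilon^2 q_0\Big(1-(1-\alpha)^{\frac{1}{1-\alpha}}U^{\frac{1}{1-\alpha}}\Big),\qquad
\frac{dU}{d\sigma}=\varepsilon\beta\,\psi\Big(\frac{\varepsilon}{s}\Big).
\end{equation*}
Here $\psi(V)=\varphi(1/V)$. By $(H_\infty)$, $\psi$ is smooth at $0$, so the second component is $\varepsilon\beta A+O(\varepsilon^2)$ uniformly for $s$ in compact subsets of $(0,+\infty)$. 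Thus the family is a smooth two-dimensional slow-fast system with small parameter $\varepsilon$. In these coordinates:
\begin{itemize}
\item the equilibrium $(T_0(c),0)$ becomes $(s_0(\varepsilon),0)$ with $s_0(\varepsilon)=1+\sqrt{1-2\varepsilon^2q_0}=2+O(\varepsilon^2)$;
\item the vertical line $\{T=T_i\}$ becomes $\{s=\varepsilon T_i\}$, which converges to $\{s=0\}$;
\item the critical curve of the limit $\varepsilon=0$ is $\{s=0\}\cup\{s=2\}$. The branch $\{s=2\}$ is normally hyperbolic and repelling in forward time, since $\partial_s(s^2/2-s)=1>0$ there.
\end{itemize}

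\textbf{Key steps.} (1) By Fenichel theory for planar slow-fast systems, as in \cite{DDMR}, there is a repelling center (slow) manifold $W_\varepsilon=\{s=\mathcal S(U,\varepsilon)\}$ along $\{s=2\}$ for $U\in[0,U^*]$. I would compute its first-order expansion explicitly from the invariance equation
\begin{equation*}
\Big(\frac{\mathcal S^2}{2}-\mathcal S+\varepsilon^2q_0(\cdots)\Big)=\varepsilon\beta\,\psi\,\partial_U\mathcal S .
\end{equation*}
This should give $\mathcal S(0,\varepsilon)=2+\varepsilon k+O(\varepsilon^2)$ for an explicit constant $k$ of one sign, coming from the term $\varepsilon\beta A$ (a $\partial_U$ correction at order $\varepsilon$).

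(2) Compare the orbit $\gamma_0$ with $W_\varepsilon$. The orbit $\gamma_0$ starts at $s_0(\varepsilon)=2+O(\varepsilon^2)$. Hence at $U=0$ it lies at distance of order $\varepsilon$ from $W_\varepsilon$, and I expect it to lie on the left side; this is consistent with Lemma \ref{lem-transverse}, which says $\gamma_0$ leaves to the left.

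(3) Repulsion estimate. Along $\{s\approx2\}$ the normal linearization is $\approx 1$, so the horizontal distance to $W_\varepsilon$ grows like $\varepsilon e^{\sigma}$. Meanwhile $U$ grows only like $\varepsilon\beta A\sigma$. So after a fast time $\sigma=O(\log(1/\varepsilon))$ the orbit is at distance $O(1)$ to the left of $W_\varepsilon$, with $U=O(\varepsilon\log(1/\varepsilon))$.

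(4) Fast jump. From there the fast dynamics is essentially horizontal, with $ds/d\sigma\le -\eta<0$ on $\{\varepsilon T_i\le s\le 2-\delta\}$. It carries the orbit to $\{s=\varepsilon T_i\}$ in time $O(1)$, so $U$ increases by only $O(\varepsilon)$. A simple comparison or trapping-box argument, like the one in Proposition \ref{prop-beta0}, controls this step. Hence $U_0(\beta,c)=O(\varepsilon\log(1/\varepsilon))\to0$, so $Z_0(\beta,c)\to0<1$, which proves the claim.

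\textbf{Main obstacle.} The delicate point is step (2): establishing rigorously that $\gamma_0$ starts on the correct side of the repelling slow manifold $W_\varepsilon$, at a distance not exponentially small. This requires an honest expansion of $W_\varepsilon$ up to $U=0$, where the original field $\tilde X$ is degenerate but the $U$-field is smooth, together with a sign computation for $k$. Repelling slow manifolds are only unique up to exponentially small terms, so I would fix one and work with the $O(\varepsilon)$ gap. If the sign of $k$ turned out unfavorable, the fallback is to push the expansion to order $\varepsilon^2$, where the $q_0$ term enters.
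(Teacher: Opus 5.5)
Your reduction (for fixed $\beta$, $Z_0(\beta,c)<1$ is equivalent to $\beta<\beta_0(c)$) is correct. Following $\gamma_0$ forward to get $U_0(\beta,c)\to0$ is a genuinely different route from the paper's. The paper follows the orbit through $(T_i,1)$ backward and shows it acquires a bump after being captured, on $Z\geqslant\delta$, by the center manifold $V=\frac{\varepsilon}{2}+\frac{q_0(1-Z)}{4}\varepsilon^3+O(\varepsilon^4)$. That is the same object as your $W_\varepsilon$, since $V=\varepsilon/s$.

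\textbf{The gap.} The step you single out as the crux rests on a false computation.
\begin{itemize}
\item At order $\varepsilon$, your invariance equation gives $\mathcal S_1=\beta A\,\partial_U(2)=0$, so $k=0$.
\item At order $\varepsilon^2$ it gives $\mathcal S=2-\varepsilon^2q_0(1-Z)+O(\varepsilon^3)$. This is just the rescaled homocline $h_c^r=\{s=h(U,\varepsilon)\}$, with $h:=1+\sqrt{1-2\varepsilon^2q_0(1-Z)}$, and $\gamma_0$ starts on it.
\item So $W_\varepsilon$ and $\gamma_0$ both start at $2-\varepsilon^2q_0+O(\varepsilon^3)$. There is no $O(\varepsilon)$ gap, and your fallback to order $\varepsilon^2$ also gives zero.
\item The first separation between $W_\varepsilon$ and $h_c^r$ is of order $\varepsilon^3Z^\alpha$, on the right. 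It vanishes at $U=0$ when $\alpha>0$.
\item The field in $(s,U)$ contains $U^{1/(1-\alpha)}$, so at $U=0$ it is only of class $\big[\frac{1}{1-\alpha}\big]$, not smooth. The results of \cite{DDMR} (existence, exponentially flat non-uniqueness) therefore cannot be invoked down to $U=0$. This is precisely why the paper works on $Z\geqslant\delta$.
\end{itemize}
As written, step (2) is unproved, and step (3) depends on it.

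\textbf{How to repair it.} You do not need $W_\varepsilon$ at all. By Claim \ref{claim(iv)}, $\gamma_0$ stays in $C_c$, i.e.\ $2-h<s<h$. Since $\frac{ds}{d\sigma}=-(h-s)\frac{s+h-2}{2}$, the gap $d:=h(U,\varepsilon)-s>0$ satisfies
\[
\frac{dd}{d\sigma}=\partial_Uh\,\frac{dU}{d\sigma}+d\,\frac{s+h-2}{2},\qquad \partial_Uh=\frac{\varepsilon^2q_0Z^\alpha}{h-1}\geqslant0,
\]
and both terms are nonnegative.
\begin{itemize}
\item Hence $d(\sigma)\geqslant h(U(\sigma),\varepsilon)-h(0,\varepsilon)\geqslant\varepsilon^2q_0Z(U(\sigma))$.
\item At $\sigma=1$ this is bounded below by a power of $\varepsilon$, because $U(1)\geqslant\varepsilon\beta\inf\varphi$.
\item Afterwards $dd/d\sigma\geqslant d/2$ as long as $d\leqslant\frac12$, so $d$ reaches $\frac12$ within $\sigma=O(\log\frac1\varepsilon)$.
\end{itemize}

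Step (4) must also be corrected. The bound $ds/d\sigma\leqslant-\eta$ fails near $s=\varepsilon T_i$: $\{s=0\}$ is the other, attracting, critical branch, and $ds/d\sigma\approx-s$ there. What holds is $ds/d\sigma\leqslant-s/8$ on $[\varepsilon T_i,\frac32]$ for small $\varepsilon$. This gives a passage time $O(\log\frac1\varepsilon)$, not $O(1)$.

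Since $dU/d\sigma\leqslant\varepsilon\beta\sup\varphi$, you still obtain $U_0(\beta,c)=O(\varepsilon\log\frac1\varepsilon)$, hence $Z_0(\beta,c)\to0$. Repaired this way, your route is more elementary than the paper's. It needs no Fenichel theory, and it uses $(H_\infty)$ only through the bounds $0<\inf\varphi\leqslant\sup\varphi<\infty$ on $[T_i,+\infty)$.
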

	\begin{proof}
		The proof  is rather long and uses
		again the theory of slow-fast systems, but now in a slightly deeper way than in Subsection 2.2 for $\beta\rightarrow 0.$ 
		In order to use \cite{DDMR}, we need to have  $C^\infty$ (smooth) differential systems. Then, we choose any $\delta$ such that $0<\delta<1,$  and we restrict $\tilde X_{\beta,c}$ to $Q_\delta=\{ T\geqslant T_i\  \  Z\geqslant \delta\}$. {\it Clearly 
			$\tilde X_{\beta,c}$ is smooth on $Q_\delta.$ }
		We will proceed in  several steps.
		%{\it We will assume everywhere that $\beta>0.$}
		\vskip5pt\noindent{\bf Step 1: Reduction to the search of solutions with a bump}.
		We recall that any orbit (image of a trajectory, forgetting the integration time) of 
		$-\tilde X_{\beta,c}$ in $Q_\delta,$ starting at a point  of $\{T=T_i\}$ is a graph in $Z$, in particular the orbit $\Gamma(\beta,c)$ starting at the point $(T_i,1)$ is a graph $T=\gamma(Z,\beta,c)$ over 
		$[\delta,1]$
		with $\gamma(1,\beta,c)=T_i.$

		The main result of this subsection is the following lemma:
		\begin{lemma}\label{lem-rel-position}
			For any $\beta>0$, there exists a $c(\beta)$  such that, if $c> c(\beta)$, then 
			the orbit $\Gamma(\beta,c)$ of $-\tilde X_{\beta,c}$
			starting at  $(T_i,1)$ is of bump type.
		\end{lemma}
		The next steps are devoted to the proof of Lemma \ref{lem-rel-position}. To complete Step 1, let us show that Lemma \ref{lem-rel-position} implies Proposition \ref{prop-c-infty}: Take any $\beta>0,$ by definition of $\beta_0(c),$  for each $c>c(\beta)$ we have that $\beta_0(c)\geqslant \beta$.  In other words, \textit{for any $\beta>0$ there exists a $c(\beta)$ such that $c>c(\beta)$ implies that $\beta_0(c)>\beta.$}
		This means that $\beta_0(c)\rightarrow +\infty$ if $c\rightarrow +\infty.$ 
		
		\vskip5pt\noindent{\bf Step 2: $\tilde X_{\beta,c}$ as a slow-fast system in the small parameter 
			$\varepsilon=\frac{1}{c}.$}
		As we want to consider $c\rightarrow +\infty$, it is convenient to introduce the small positive parameter $\varepsilon=\frac{1}{c}\rightarrow 0_+$ and the slow fast system:
		\begin{equation}\label{eq-XUvarepsilon}
			X_{\beta,\varepsilon}=-\frac{1}{c}\tilde X_{\beta,c} := 
			\left\{
			\begin{array}{l}
				{T'} = T-\varepsilon\big(\frac{1}{2}T^2+q_0(1-Z)\big), \\[1mm]
				{Z'} =-\varepsilon\beta \varphi(T)Z^\alpha.
			\end{array}
			\right.
		\end{equation}
		The division of $\tilde X_{\beta,c}$ by $c$ is equivalent to changing the time $\tau$ by the time 
		$s=c\tau.$ 
		%For a function $f(s)$ we write: $\frac{df}{ds}=f'.$
		The limit equation is the trivial vector field $T\frac{\partial}{\partial T}$ which has no zero. But it will be interesting to consider the behavior of trajectories of $X_{\beta,\varepsilon}$
		for $T\rightarrow +\infty.$ 
		\vskip5pt\noindent{\bf Step 3: Looking near $T=+\infty.$}
		To overcome the problems associated with non-compactness, we compactify $X_{\beta,\varepsilon}$ in the $T$-direction by introducting the coordinate $V=\frac{1}{T}.$
		In the chart $(U,V)$, the  system $X_{\beta,\varepsilon}$ reads:
		\begin{equation}\label{eq-XZV}
			X_{\beta,\varepsilon} := 
			\left\{
			\begin{array}{l}
				{V'} = -V+\varepsilon\big(\frac{1}{2}+q_0(1-Z)V^2\big), \\[2mm]
				{Z'} =-\varepsilon\beta \varphi(\frac{1}{V})Z^\alpha.
			\end{array}
			\right.
		\end{equation}
		The domain $Q_\delta$ given by $Q_\delta=\{0<V<V_i=\frac{1}{T_i}\}, Z\geqslant \delta\}$ is compactified for $T\rightarrow +\infty$ by the smooth extension to $V=0$. The orbit 
		$\Gamma(\beta,c)$ is now called $\Gamma(\beta,\varepsilon).$  It is a graph 
		$V=\tilde\gamma(Z,\beta,\varepsilon)$ over the $Z$-interval $[\delta,1].$ As we go from 
		$\gamma(Z,\beta,c)$ to $\tilde\gamma(Z,\beta,\varepsilon)$ by a difffeomorphism of the target space, the critical points are preserved and \textit{$\Gamma(\beta,c)$ will be a bump type solution if  there exists a $\bar Z\in (\delta,1)$ such that 
			$\frac{\partial \tilde\gamma}{\partial Z}(\bar Z,\beta,\varepsilon)=0.$}
		The hypothesis $(H_\infty)$ allows to apply the theory of slow fast systems in a neighborhood of the axis $\{ V=0\}.$ 
		
		\vskip5pt\noindent{\bf Step 4: Center manifold}.
		We consider $X_{\beta,\varepsilon}$ in the chart $(V,Z)$ which covers the whole domain $Q_\delta.$
		For $\varepsilon=0$ and any $\beta$, the vector field  $X_{\beta,\varepsilon}$ has a normally hyperbolic  line of zeros $\{V=0\}$ and then is a slow-fast system 
		of class $C^\infty.$ By adding the trivial equations $\dot{\varepsilon}=0,\dot{\beta}=0$, we can consider $X_{\beta,\varepsilon}$
		as a $\mathcal{C}^\infty$-differential system in four coordinates $(Z,V,\beta,\varepsilon).$  Along the critical surface,   
		$\{V=\varepsilon=0\},$ this $4$-dimensional vector field has at each point $m$ an hyperbolic attracting direction, the $V$-axis with eigenvalue $-1$, and a  $3$-dimensional eigenspace   $E_0(m)$ parallel to $ \{V=0\},$ with eigenvalue $0.$ The slow dynamics is defined along the critical line in the slow time $\tau$ by the equation $\frac{dZ}{d\tau}=-\beta A Z^\alpha.$ 
		This slow dynamics has no singularity as $\beta A>0$ and $Z\geqslant \delta>0.$ 
		\vskip5pt 
		The Fenichel theory (see \cite{F}) shows existence of {\it center manifolds} along normally hyperbolic pieces of the  critical set for general slow-fast differentiable systems.  Here, we will use the formulation of the Fenichel theory as in \cite{DDMR}, which is specific to the smooth slow-fast systems in {\it dimension $2$} and where the existence of smooth center manifolds is proved {\it when the slow dynamics has no singularities.}   
		
		General results of \cite{DDMR} can be translated into the following formulations adapted to our case:
		\begin{theorem} \label{th-Fenichel-1}
			We consider the slow-fast system $X_{\beta,\varepsilon}$ (\ref{eq-XZV}). Let $Z_1$ such that $\delta<Z_1<1$ and $[b_0,b_1] $  an interval in the $\beta$-axis, with $0<b_0<b_1.$ For 
			$\varepsilon_0>0$ small enough, there exists a function 
			$$V=f(Z,\beta, \varepsilon):[\delta,Z_1]\times [b_0,b_1]\times [0,\varepsilon_0]\rightarrow \R,$$ 
			of class $C^\infty,$ with $f(Z,\beta,0)\equiv 0,$ whose graph is an hypersurface $W$ tangent to $X_{\beta,\varepsilon},$ called {\rm center manifold of the system, along the critical set 
				$C=[\delta,Z_1]\times [b_0,b_1]\times \{0\}$.}  
			At each point $m\in C$, this center manifold is tangent to the eigenspace $E_0(m).$ 
			The center manifold is not unique: there exists a $M>0$ (depending on $\delta,Z_1$) such that if  $f_1$ and $f_2$ are   functions defining  two center manifolds, we have that: 
			
			$$| f_1(Z,\beta,\varepsilon)-f_2(Z,\beta,\varepsilon)|\leqslant e^{-\frac{M}{\varepsilon}}$$ 
			for $(Z,\beta,\varepsilon)\in [\delta,Z_1]\times [b_0,b_1]\times (0,\varepsilon_0].$ 
			One says that  the two center manifolds have an exponentially flat contact. A consequence is that there exists a unique  formal center manifold, as all center manifolds along  have the same formal Taylor series along the critical set $C.$ 
		\end{theorem}
		
		All center manifolds attract exponentially the nearby trajectories. More specifically, another result that resulted from \cite{DDMR}, in a form adapted to our case, is the following: 
		\begin{theorem} \label{th-Fenichel-2}
			We consider the slow-fast system $X_{\beta,\varepsilon}$ (\ref{eq-XZV}). Let be $Z_1,\delta, b_0,b_1$  as in Theorem \ref{th-Fenichel-1}.   For
			$\varepsilon_0>0$ small enough, if  
			$$V=f(Z,\beta, \varepsilon):[\delta,Z_1]\times [b_0,b_1]\times [0,\varepsilon_0]\rightarrow \R$$  is the graph of a smooth center manifold along $I$, we have that: 
			$$| f(Z,\beta,\varepsilon)-\tilde\gamma(Z,\beta,\varepsilon)|\leqslant e^{-\frac{M}{\varepsilon}}$$ 
			for $(Z,\beta,\varepsilon)\in [\delta,Z_1]\times [b_0,b_1]\times (0,\varepsilon_0],$   with the same constant $M$ as in Theorem \ref{th-Fenichel-1}.
		\end{theorem}
		
		\vskip5pt\noindent{\bf Step 5: Finite expansion of  center manifold.}
		It is easy to compute the  following finite $\varepsilon$-expansion  of a function $V=f(Z,\beta,\varepsilon)$ defining a central manifold: 
		\begin{lemma}\label{lem-formal-cm}
			Let $V=f(Z,\beta,\varepsilon)$ be the graph of a center manifold of $X_{\beta,\varepsilon}$
			along $C.$ Then,  we have that 
			$$f(Z,\beta,\varepsilon)=\frac{\varepsilon}{2}+\frac{q_0(1-Z)}{4}\varepsilon^3+O(\varepsilon^4),$$
			uniformly in $(Z,\beta)\in[\delta,Z_1]\times [b_0,b_1].$ 
		\end{lemma}
		\vskip5pt\noindent{\bf Proof}
		
		The tangent vector to the graph of $f$ at the point $(f,Z)$ is equal to 
		$\frac{\partial}{\partial Z}+\frac{\partial f}{\partial Z}\frac{\partial}{\partial V}.$ 
		We have to write that this vector is proportional to the vector $X_{\beta,\varepsilon} (f,Z).$ This gives
		\begin{equation}\label{eq-tangent}
			\frac{\partial f}{\partial Z}=\frac{-f+\varepsilon[\frac{1}{2}+q_0(1-Z)f^2]}{-\varepsilon\beta \psi(f) Z^\alpha}.
		\end{equation}
		Consider  the expansion of $f$  at order $1$ in $\varepsilon$: $f(Z,\beta,\varepsilon)=\varepsilon U_1(Z,\beta)+O(\varepsilon^2).$ We have that 
		$\frac{\partial f}{\partial Z}(Z,\beta,\varepsilon)=\varepsilon\frac{\partial U_1}{\partial Z}(Z,\beta)+O(\varepsilon^2).$ Plugging these expansions in (\ref{eq-tangent}) gives that:
		$$\varepsilon\frac{\partial U_1}{\partial Z}+O(\varepsilon^2)=\frac{-U_1+\frac{1}{2}}{-\beta A }+O(\varepsilon).$$
		Taking $\varepsilon=0,$ it comes that $U_1=\frac{1}{2}.$ Then, 
		$f=\frac{\varepsilon}{2}+\varepsilon^2U_2(Z,\beta)+O(\varepsilon^3)$ and 
		$\frac{\partial f}{\partial Z}=\varepsilon^2\frac{\partial U_2}{\partial Z}+O(\varepsilon^3).$
		Using these expansions in (\ref{eq-tangent}),
		$$\varepsilon^2\frac{\partial U_2}{\partial Z}+O(\varepsilon^3)
		=\frac{-\frac{\varepsilon}{2}-\varepsilon^2U_2+O(\varepsilon^3)+\varepsilon(\frac{1}{2}+O(\varepsilon^2))}{-\varepsilon \beta (A+O(\varepsilon)) Z^\alpha}.$$
		This implies that  $U_2=O(\varepsilon).$ Again, taking $\varepsilon= 0,$ we obtain  that $U_2=0.$ 
		Finally, $f(Z,\beta,\varepsilon)$ reads $f(Z,\beta,\varepsilon)=\frac{\varepsilon}{2}+U_3(Z,\beta)\varepsilon^3+O(\varepsilon^4)$ and (\ref{eq-tangent}) yields
		$$\varepsilon^3\frac{\partial U_3}{\partial Z}+O(\varepsilon^4)
		=\frac{-\frac{\varepsilon}{2}-\varepsilon^3U_3+\varepsilon(\frac{1}{2}+\frac{q_0(1-Z)}{4}\varepsilon^2)+O(\varepsilon^4)}{-\varepsilon\beta (A+O(\varepsilon) )Z^\alpha},$$
		which in turn implies that 
		$\displaystyle \varepsilon\frac{\partial U_3}{\partial Z}=\frac{-U_3+\frac{q_0(1-Z)}{4}}{\beta A Z^\alpha}+O(\varepsilon).$ For  $\varepsilon=0$,  $\displaystyle U_3=\frac{q_0(1-Z)}{4}.$
	\end{proof}
	
	\vskip5pt\noindent{\bf Step 6:  Proof of Lemma \ref{lem-rel-position}}. It follows from  Theorem \ref{th-Fenichel-2} and Lemma \ref{lem-formal-cm}
	that :
	$$\tilde \gamma(Z,\beta,\varepsilon)=\frac{\varepsilon}{2}+\frac{q_0(1-Z)}{4}\varepsilon^3+O(\varepsilon^4),$$
	uniformly in $(Z,\beta)\in [\delta,Z_1]\times [b_0,b_1].$ As a consequence we have that $\frac{\partial \tilde \gamma}{\partial Z}(Z,\beta,\varepsilon)<0,$
	for $\varepsilon>0$ small enough, uniformly in  $(Z,\beta)\in [\delta,Z_1]\times [b_0,b_1].$
	On the other side, as the limit vector field for $\varepsilon\rightarrow  0$ is  horizontal, 
	we have that 
	$\frac{\partial \tilde \gamma}{\partial Z}(1,\beta,\varepsilon)\rightarrow +\infty$
	for $\varepsilon\rightarrow 0,$ uniformly in  $\beta\in[b_0,b_1].$ It follows that there exists a value 
	$\bar Z(\beta)\in ]Z_1,1[$ where $\frac{\partial \tilde \gamma}{\partial Z}(\bar Z(\beta),\beta,\varepsilon)=0$ , if $\varepsilon>0$ small enough, uniformly in 
	$\beta\in[b_0,b_1].$ This proves the statement of Lemma  \ref{lem-rel-position}.
	
	\begin{figure}[htp]
		\begin{center}
			\includegraphics[scale=0.5]{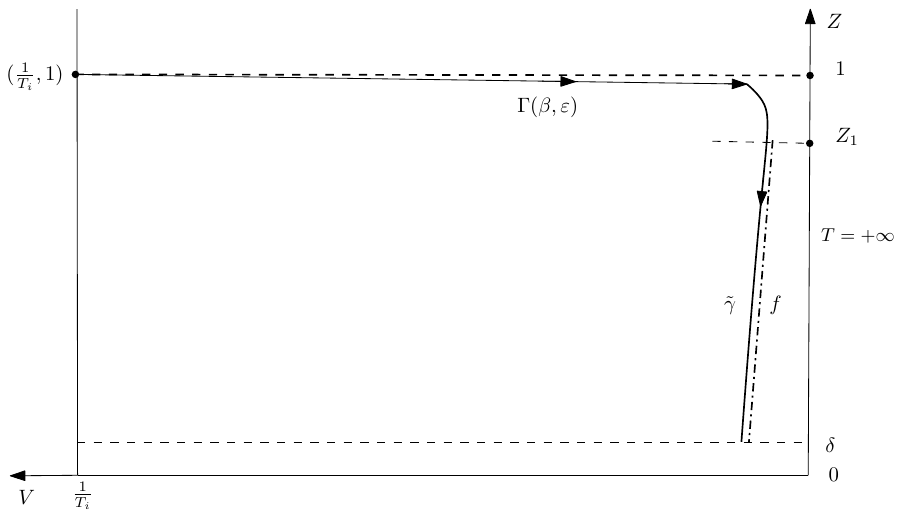}\\
			\caption{$X_{\beta,\varepsilon}$ for $\varepsilon$ small}\label{fig-Vepsilon}
		\end{center}
	\end{figure}

	\section{Numerical results}\label{numerics}
	
	To better understand the Majda-Rosales model with fractional order and confirm our theoretical results, we perform some numerical computations in view of the bifurcation diagram, starting from the vector field \eqref{eq-tildeXU}.

	First, we focus on the Heaviside function $\varphi(T)=H(T-T_{i})$, try to find an equivalent form of the solution of the system \eqref{eq-tildeXU}, and plot the bifurcation diagram using Maple. Then we move on to the Arrhenius law $\varphi(T)=H(T-T_{i})\exp(-\frac{T_{a}}{T})$, using some qualitative results discussed in the previous chapters, we design numerical methods to explore and confirm the specific properties of the respective bifurcation diagrams.
	
	Two methods are implemented: an analytical method (see Appendix \ref{appendixA}) and a shooting method (see Appendix \ref{AppendixB}). For the sake of simplicity, the analytical method will be limited to the Heaviside law in the special case $\alpha = 0.5$. The shooting method will be presented for the Heaviside function and the Arrhenius law, the Heaviside function being clearly a simplification as mentioned earlier. 
	
	\subsection{Analytical method for Heaviside law}
	First, we try to derive an expression that connects the initial condition
	\((U(0), T(0)) = \left(\frac{1}{1-\alpha}, T_i\right)\) with the final condition $(U(-\ell'),T(-\ell'))=(0, T_{0}(c))$ (or $(U(-\ell'),T(-\ell'))=(0,T_{1}(c))$) while characterizing the dependence on the parameters $c$ and $\beta$. After integrating the second ODE in the system \eqref{eq-tildeXU} with the initial condition $U(0)=\frac{1}{1-\alpha}$, we have
	\begin{align}
		U(\tau)=\beta\tau+\frac{1}{1-\alpha}, \qquad 	\ell'=\frac{1}{\beta(1-\alpha)}.
	\end{align}
	This immediately decouples the system as a Riccati equation
	\begin{align}\label{Riccati}
		{\dot T}=-cT+\frac{1}{2}T^2+q_0\Big(1-(1-\alpha)^{\frac{1}{1-\alpha}}(\beta\tau+\frac{1}{1-\alpha})^{\frac{1}{1-\alpha}}\Big),
	\end{align}
	with initial condition $T(0)=T_{i}$. 
	
	Several types of solutions can be observed by applying the Runge-Kutta 4/5 method to \eqref{Riccati}  for different values of the parameter $\beta>0$ and a fixed value of the parameter $c>c^{cj}$, see Figures \ref{pic-1} to \ref{pic-3}. 
	\begin{figure}[htp]
		\centering
		\begin{minipage}[t]{0.49\linewidth}
			\centering
			\includegraphics[width=1.1\linewidth]{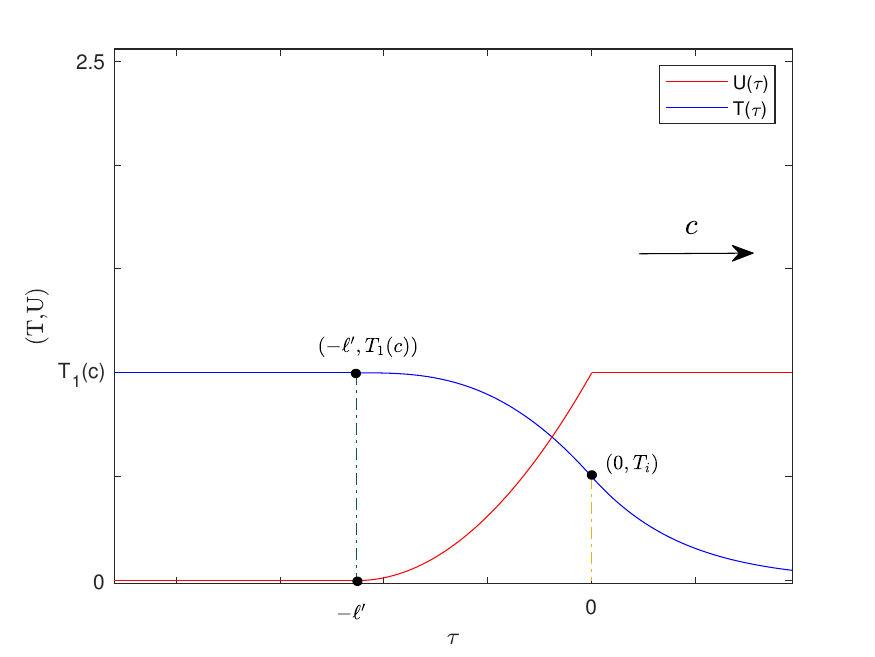}
			\label{compact2.pdf}
			\subcaption*{(a)}	
		\end{minipage}
		\begin{minipage}[t]{0.49\linewidth}
			\centering
			\includegraphics[width=1.13\linewidth]{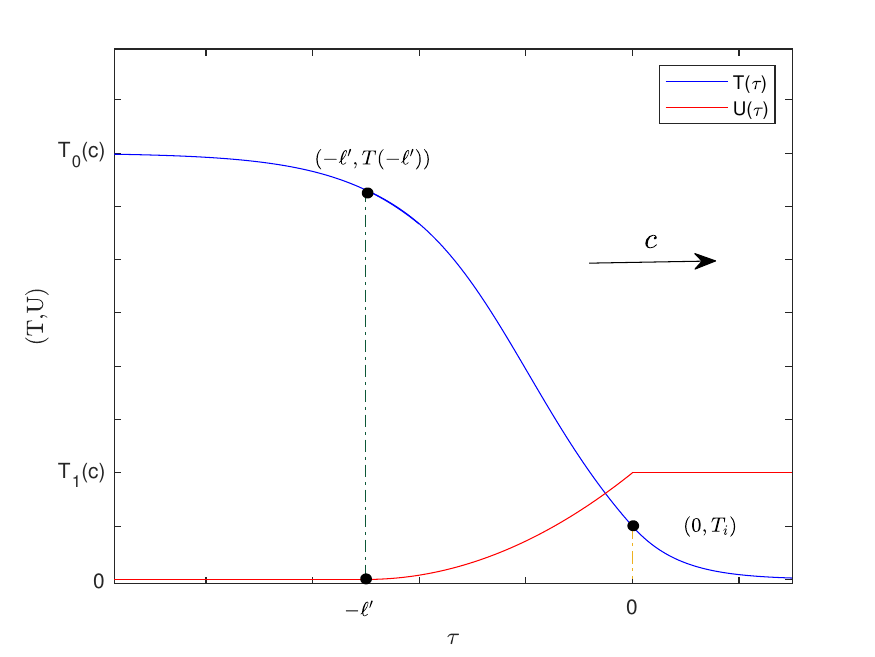}
			\label{monotonic}
			\subcaption*{(b)}
		\end{minipage}
		\caption{Profiles of solutions of the vector field \eqref{eq-tildeXU_bis}, extended to the entire real line, with values of parameters $\alpha=0.5$, $q_{0}=2$, $T_{i}=0.5$, $c=2.5$. $(a)$ $\beta=1.7675$, weak reactive waves of special type; $(b)$ $\beta=0.8$, strong reactive wave with monotonic type with velocity $c$. The ignition temperature is reached at $\tau=0$ and the free interface is at $\tau=-\ell'$.}
		\label{pic-1}
	\end{figure}
	\begin{figure}[htp]
		\label{type2}
		\begin{minipage}[t]{0.49\linewidth}
			\centering
			\includegraphics[width=1.1\linewidth]{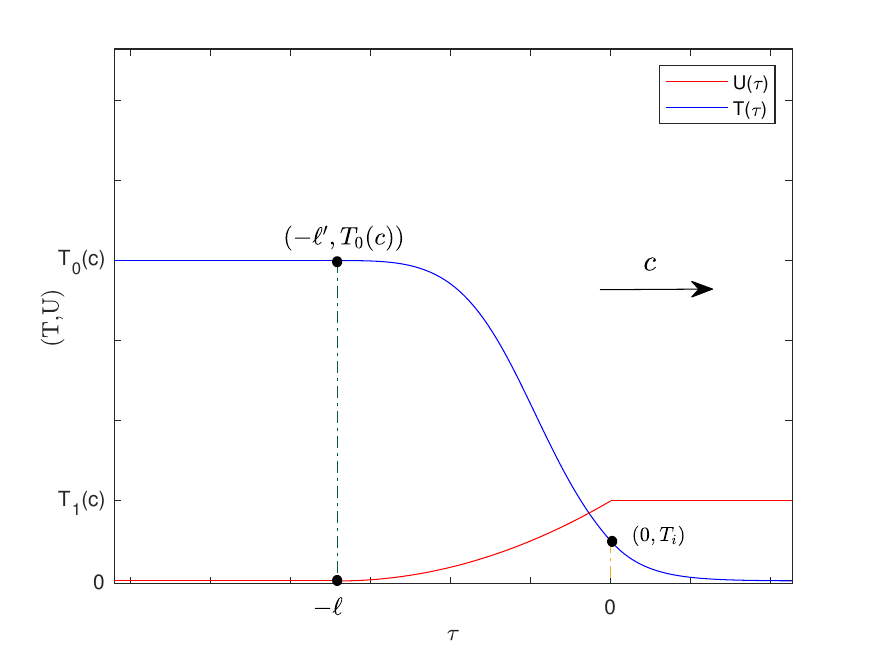}
			\label{compact1}
			\subcaption*{(c)}
		\end{minipage}
		\begin{minipage}[t]{0.49\linewidth}
			\centering
			\includegraphics[width=1.1\linewidth]{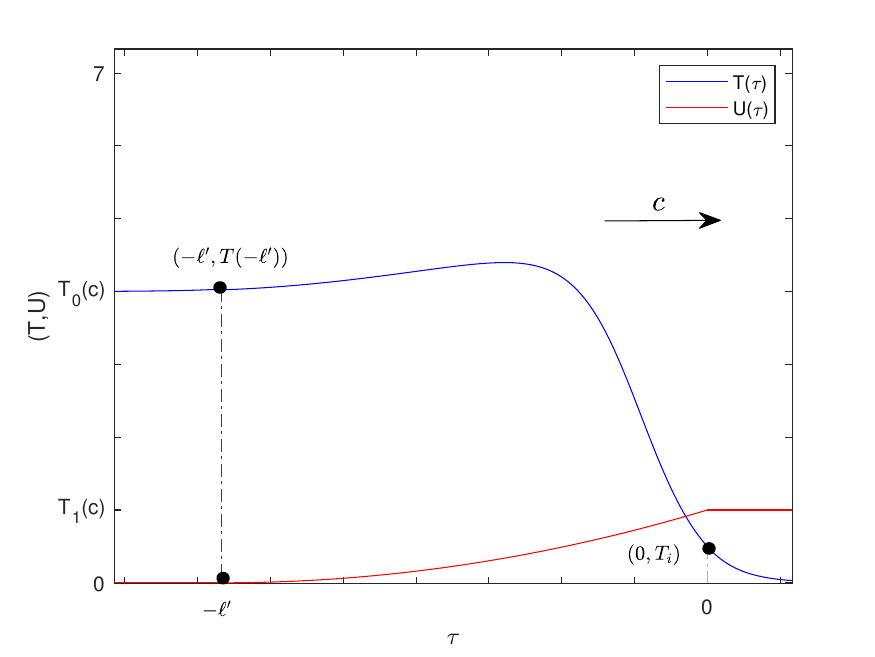}
			\label{bump.pdf}
			\subcaption*{(d)}
		\end{minipage}
		\caption{Profiles of solutions of the vector field \eqref{eq-tildeXU_bis}, extended to the entire real line, with values of parameters $\alpha=0.5$, $q_{0}=2$, $T_{i}=0.5$, $c=2.5$.  $(c)$ $\beta=0.582$, strong reactive wave of special type; $(d)$ $\beta=0.4$, strong reactive wave with a bump.}
		\label{pic-2}
	\end{figure}
	\begin{figure}[htp]
		\label{type3}
		\begin{minipage}[t]{0.49\linewidth}
			\centering
			\includegraphics[width=1.1\linewidth]{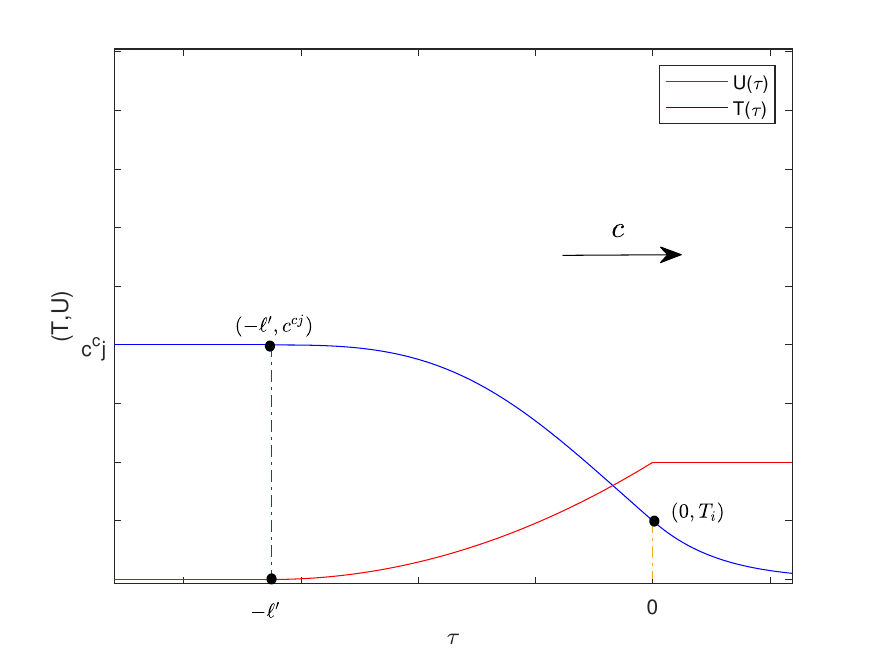}
			\label{cj1}
			\subcaption*{(e)}
		\end{minipage}
		\begin{minipage}[t]{0.49\linewidth}
			\centering
			\includegraphics[width=1.1\linewidth]{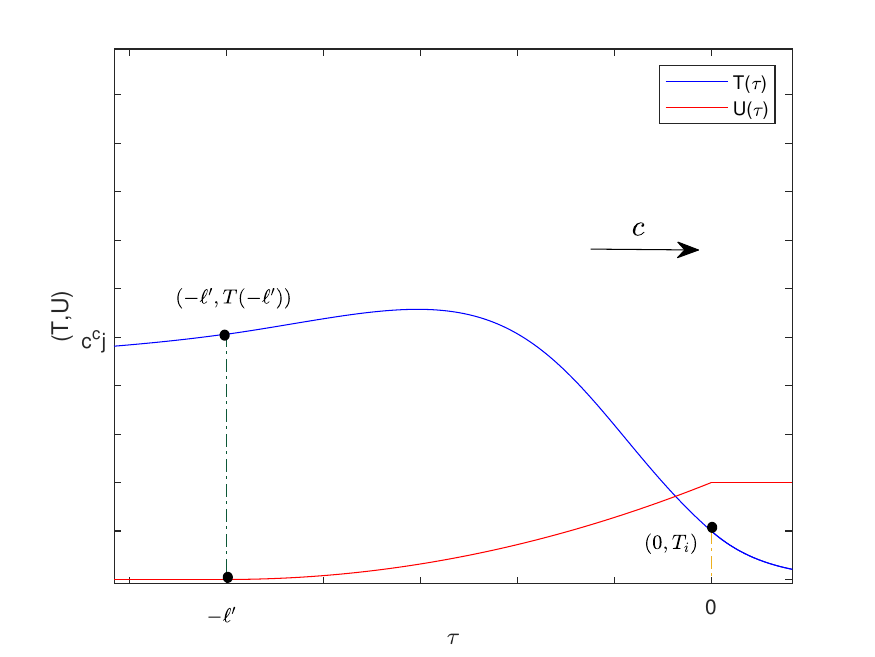}
			\label{cj2}
			\subcaption*{(f)}
		\end{minipage}
		\caption{Profiles of solutions of the vector field \eqref{eq-tildeXU_bis}, extended to the entire real line, with values of parameters $\alpha=0.5$, $q_{0}=2$, $T_{i}=0.5$, $c=2$. $(e)$ $\beta=0.615$, CJ reactive wave of special type. $(f)$ $\beta=0.4$, CJ reactive wave with a bump (combustion spike).}
		\label{pic-3}
	\end{figure}

	As announced, for simplicity we fix \(\alpha = 0.5\), $q_{0}=2$, and $T_{i}=0.5$ in this section. Then, we use
	Maple's ODE solver $\mathtt{dsolve}$ to derive an equivalent form of the solution for \eqref{Riccati} with the initial condition $T(0)=T_{i}$ and then derive an equation $T(-\ell')-T_{0}(c)=0$ (see \eqref{eq-implicit beta0}) by plugging the final conditions into the latter. Finally, using Maple's implicit function tool  $\mathtt{implicitplot}$ (see \cite{HM}) on the formula \eqref{eq-Riccati solution}), with the range $\mathtt{\beta = 0.01 . 5}$, $\mathtt{c = 2 .. 5}$ and the grid option $\mathtt{gridrefine = 5}$, we generate a visual representation of the local implicit function for the curve $\beta_{0}(c)$ in the parameter space. Similarly, the result of the equation $T(-\ell')-T_{1}(c)=0$ is displayed in \eqref{eq-implicit beta1}. Finally repeating the previous steps, the bifurcation diagram is displayed as Figure \ref{Bifurcation maple}.
	
	\begin{figure}[h]
		\centering
		\includegraphics[width=0.5\linewidth]{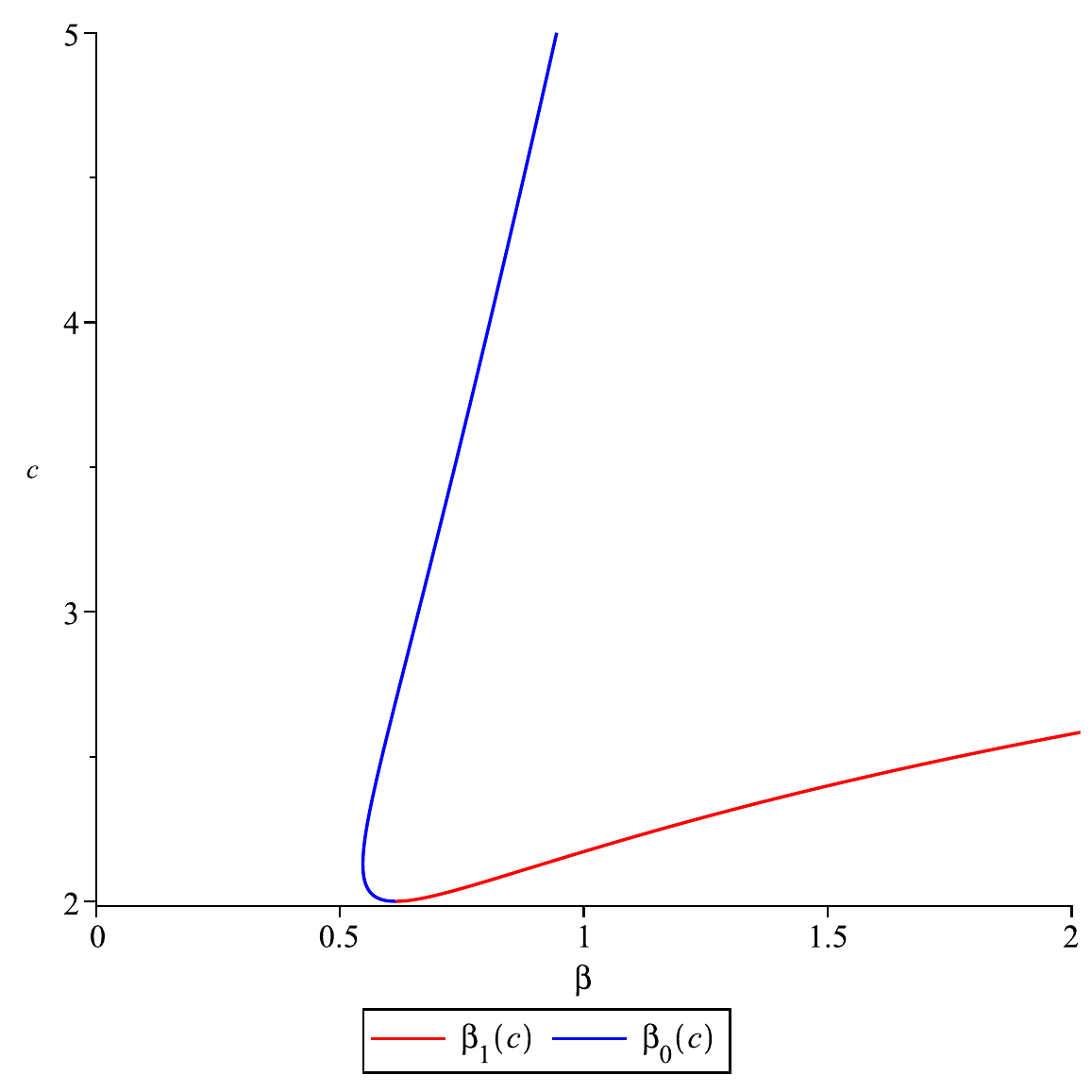}
		\caption{Bifurcation diagram in the case of Heaviside function with parameter values $\alpha=0.5$, $q_{0}=2$, $T_{i}=0.5$.}
		\label{Bifurcation maple}
	\end{figure}
	\begin{figure}
		\centering
		\includegraphics[width=0.5\linewidth]{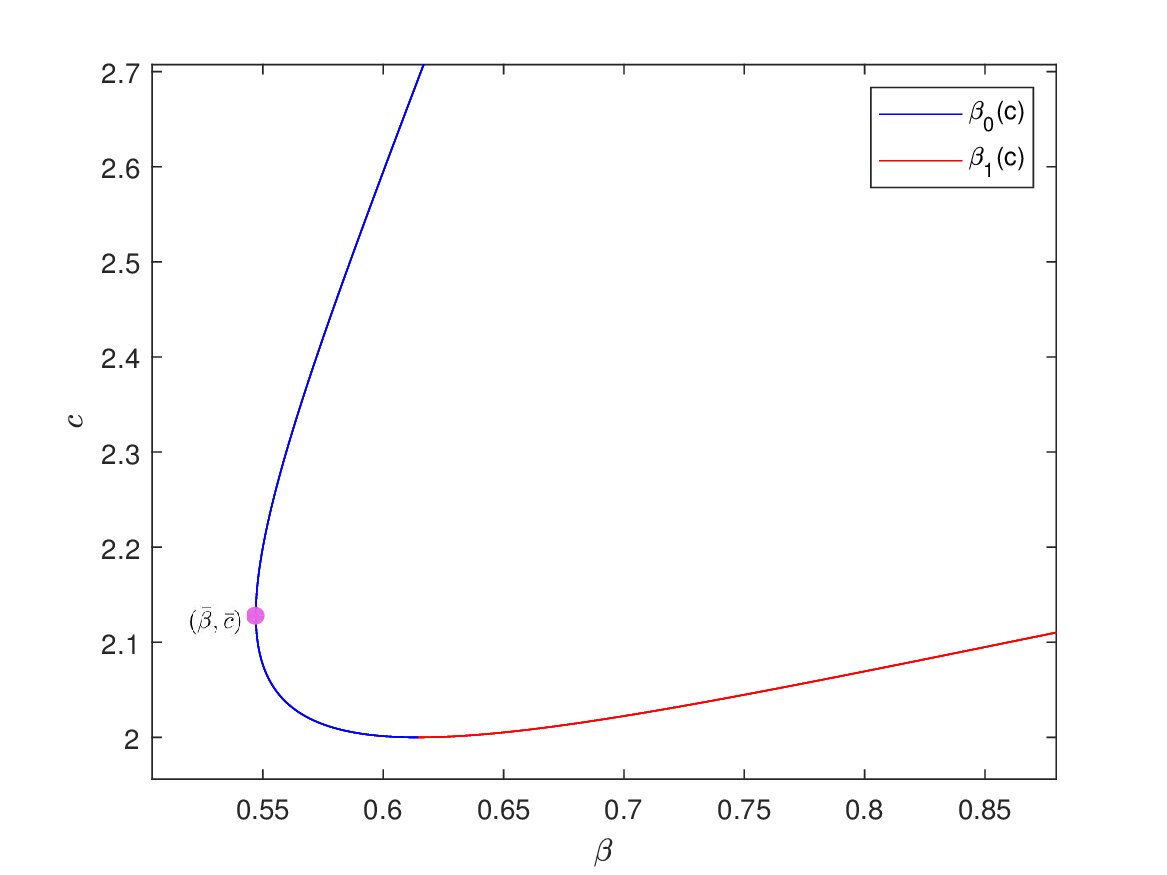}
		\caption{Enlargement of Fig. \ref{Bifurcation maple} around the turning point $(\bar{\beta},{\bar{c}})$.}
		\label{Bifurcation maple1}
	\end{figure}	
	\begin{figure}[h]
		\centering
		\includegraphics[width=0.5\linewidth]{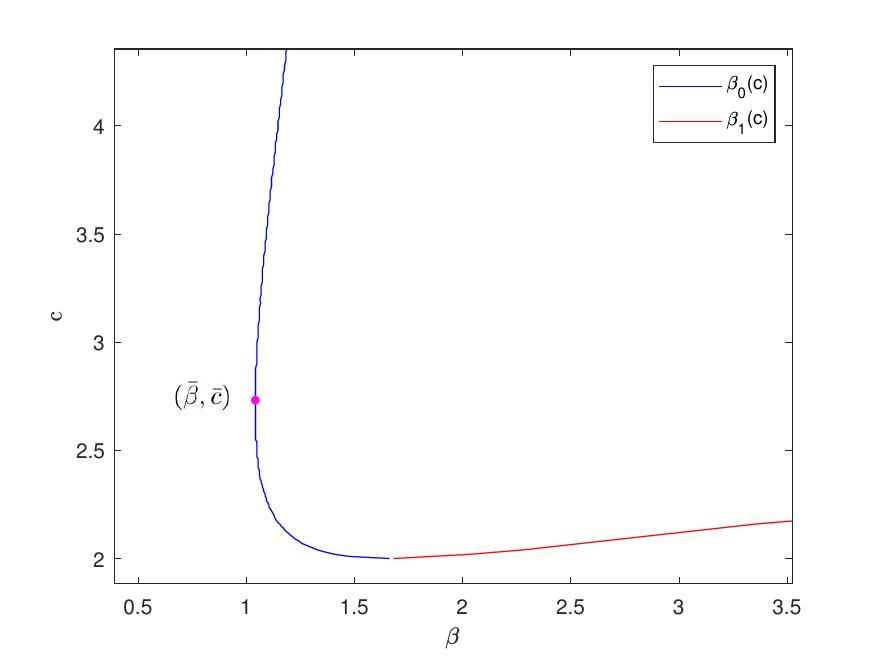}
		\caption{Enlargement of Fig. \ref{Arrhenius_bifurcation} in a neighborhood of the turning point.}
		\label{Arrhenius_bifurcation1}
	\end{figure}
	We observe that the bifurcation diagram generated by Maple agrees with the qualitative results:
	\begin{enumerate}[label=(\roman*)]
		\item the curves $\beta_0(c)$ and $\beta_{1}(c)$ are smooth, they merge at the critical point $(\beta^{\mathit{cj}},\CJ)$ of the curve $(B)$, see Proposition \ref{prop-global}; 
		\item we also observe a local extremum of the function $\beta_{0}(c)$ for $c>c^{cj}$, see Figure \ref{Bifurcation maple1}, namely a turning point $(\bar{\beta},\bar{c})$ which seemingly is unique. 
	\end{enumerate}
	
	\begin{remark}
		The accuracy of the computation of the numerical value of the critical point $(\beta^{cj},c^{cj})$ may be improved as follows:
		plugging $c=c^{cj}$ into $\eqref{eq-implicit beta0}$ yields the formula:
		%To improve the accuracy of the numerical value of the critical point $(\beta^{cj},c^{cj})$, plugging $c=c^{cj}$ into the formula $\eqref{eq-implicit beta0}$, we derive a general formula:
		\begin{align}\label{eq-beta0c-error}
			T(-\ell^{'})-c^{cj}=\frac{3 \beta \Gamma \! \left(\frac{3}{4}\right)^{2} \sqrt{\frac{1}{\beta}}\, \left( \frac{3}{4} I_{-\frac{1}{4}}\! \left(\frac{1}{\beta}\right)-I_{\frac{3}{4}}\! \left(\frac{1}{\beta}\right)\right)}{\pi  \left( \frac{9}{8} I_{\frac{1}{4}}\! \left(\frac{1}{\beta}\right)-\frac{3}{2} I_{-\frac{3}{4}}\left(\frac{1}{\beta}\right)\right)}=0,
		\end{align}
		where $I_{v}(z)$ is a Bessel function of the first kind, $\Gamma(u)$ the Gamma function. Using Maple, formula \eqref{eq-beta0c-error} provides the following approximated value: $\beta^{\mathit{cj}} \approx 0.614452673918923$. 
		%then substituting this estimate into \eqref{eq-beta0c-error}, we get $T(-\ell^{'})-c^{cj}\approx-4.80\times10^{-14}$.
	\end{remark}

	\subsection{Numerical shooting method for Arrhenius kinetics}\label{sec-Arrhenius kinetics}
	
	A number of problems arise where the position of the free interface cannot be determined due to the inability to decouple the system when considering the Arrhenius function. To numerically simulate the bifurcation curves $\beta_0(c)$ and $\beta_1(c)$, we employ a shooting-based numerical scheme. Specifically, we discretize the parameter space with uniform meshes, and for each fixed parameter $c$, we determine the corresponding $\beta$ value by exploiting the monotonicity of the transition mappings(see proposition \ref{prop-rotating}) , combined with a highly accurate Runge-Kutta method for solving the ODE system (see, e.g., \cite{Keller, Doedel, Reichelt}). The detailed numerical procedure is presented in Appendix \ref{AppendixB}, following the implementation of the numerical method. The bifurcation diagram is shown in Figure \ref{Arrhenius_bifurcation}.
	
	Finally, we study the influence of the parameter $\alpha$ on the curves $\beta_0(c)$ and $\beta_1(c)$ in Figure \ref{fig-diffalpha}.

	\begin{figure}[h]
		\centering
		\includegraphics[width=0.5\linewidth]{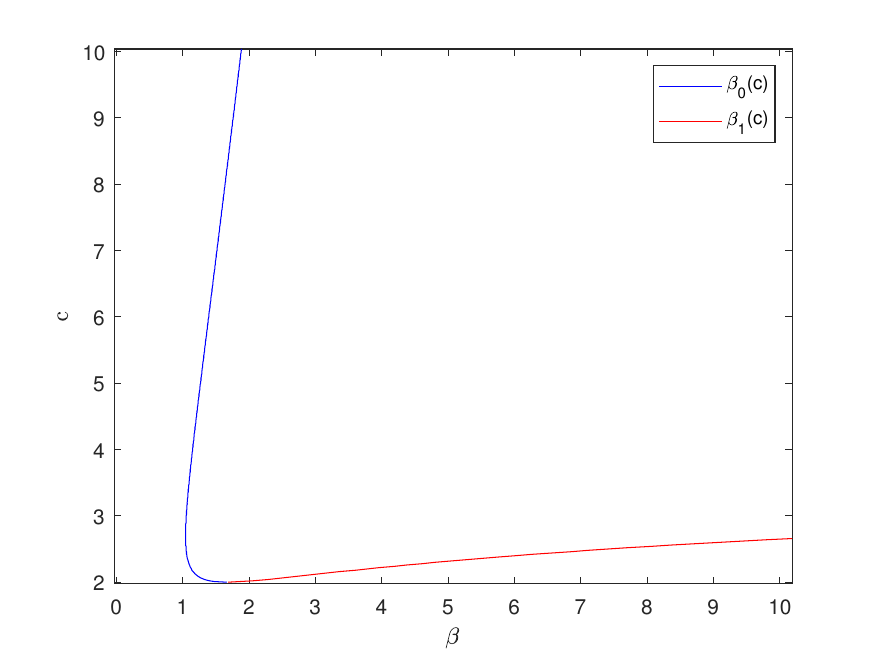}
		\caption{Bifurcation diagram for Arrhenius kinetics with parameter values: $\alpha=0.5$, $T_{a}=1$, $q_{0}=2$, $T_{i}=0.5$.}
		\label{Arrhenius_bifurcation}
	\end{figure}

	\subsection{Validation}
	The validation method employed is predicated on the observation that the numerical shooting is evidently applicable to the Heaviside function.
	
	%The numerical results are validated against known analytical solutions where applicable, ensuring the robustness of our computational approach. 
	In order to compare the results obtained by the two methods, we substitute \(\bar{\ell}'\) with \(\ell'=\frac{1}{\beta(1-\alpha)}\) in step (4) of Appendix \ref{AppendixB}. %This substitution yields the results displayed in Figure \ref{Bifurcation maple}. 
	Figure \ref{Comparison of two methods} showcases our results which exhibit a nearly complete overlap of the two curves produced by the two methods, which therefore give similar results whenever they are applicable. 
	
	An other observation is the consistent behavior of the bifurcation curves $\beta_0(c)$ and $\beta_1(c)$ across different modeling scenarios (Heaviside and Arrhenius). This similarity suggests that the underlying dynamics of the system is robust to variations in the functional form of the reaction rate.
	
	Finally, we point out the flexibility of the shooting method which allows for easy adjustments to the parameter intervals. This makes it easy to thoroughly explore the bifurcation landscape. 
	Future research will focus on the influence of additional parameters on the behavior of the system, extending the investigation into the parameter space to uncover more complex dynamic features.
	
	\begin{figure}
		\centering
		\includegraphics[width=0.5\linewidth]{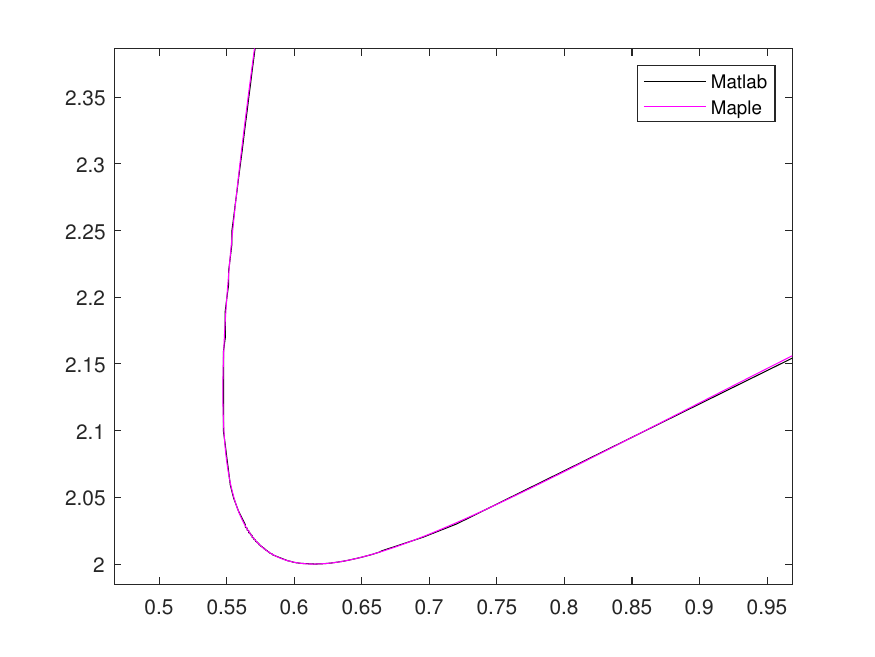}
		\caption{Comparison of the two methods.}
	%	\ref{Comparison of two methods} 
		\label{Comparison of two methods}
	\end{figure}
	
	\begin{figure}
		\centering
		\includegraphics[width=0.5\linewidth]{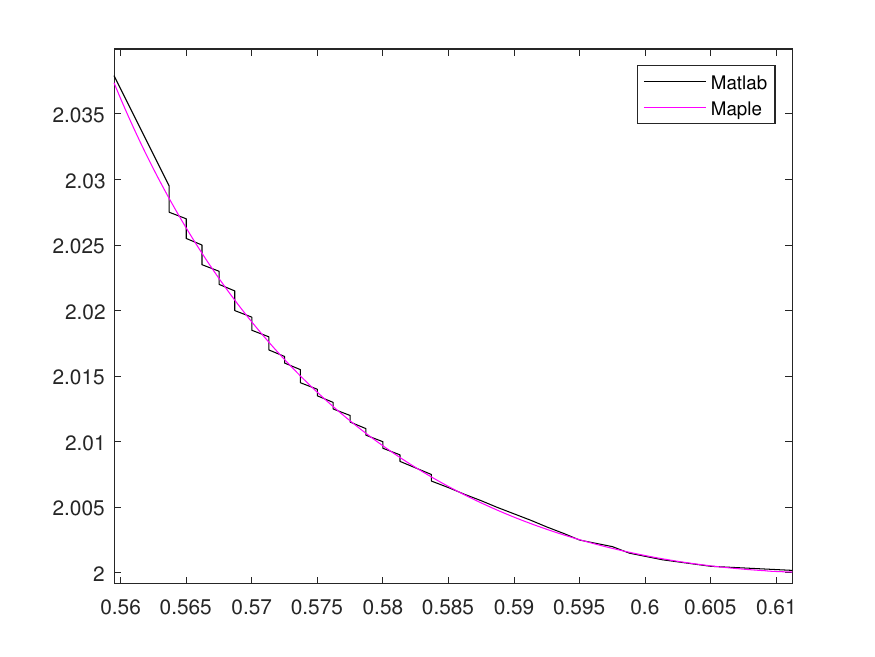}
		\caption{Comparison of the two methods (enlargement).}
	%	\ref{Comparison of two methods(zoom)} 
		\label{Comparison of two methods(zoom)}
	\end{figure}

	%\subsection{Dependency of the bifurcation diagram on the parameter $\alpha$}
	%In the previous sections, we established some numerical means to compute the simple curve $B$ in parameter space, and it has bifurcation behavior around $c=c^{cj}$ according to the numerical simulations. A natural question is to study the dependence of the bifurcation diagram on the parameters; here, we study the influence of the different values of $\alpha$ on the different branches of the curve $B$ for the Heaviside function.
	
	%After performing the calculations with Maple, we found that for the different values of the parameter $\alpha$, some equations cannot be expressed by Maple in an equivalent way. Therefore, to show the changes of the curves with the variation of the parameters, we will use the method described in Section \ref{sec-Arrhenius kinetics}. The details will not be discussed here, but the results will be given. 
	\begin{figure}[h]
		\centering
		\begin{minipage}[t]{0.49\linewidth}
			\centering
			\includegraphics[width=0.9\linewidth]{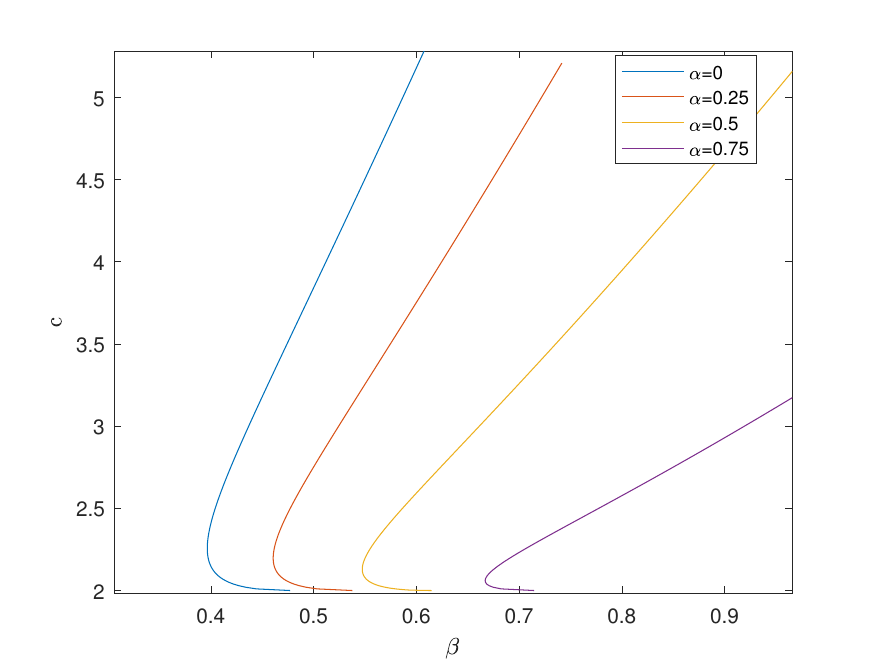}
		\end{minipage}
		\begin{minipage}[t]{0.49\linewidth}
			\centering
			\includegraphics[width=0.9\linewidth]{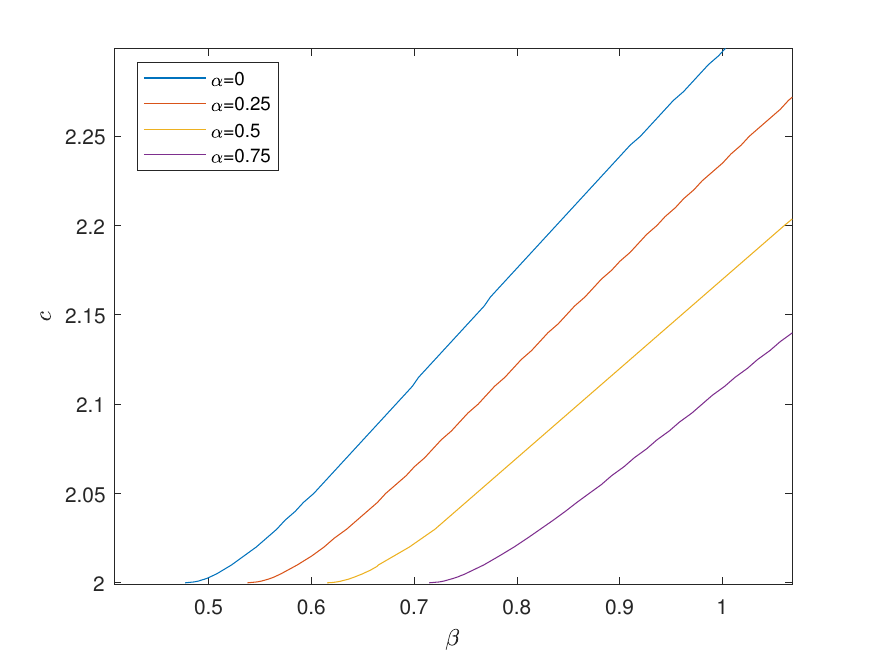}
		\end{minipage}
		\caption{Effect of parameter $\alpha$ on the curve $\beta_{0}(c)$ (left) and on the curve $\beta_{1}(c)$ (right). Here, $T_{a}=1$, $q_{0}=2$, $T_{i}=0.5$.}
		\label{fig-diffalpha}
	\end{figure}
	\newpage
	
	\setcounter{section}{0}
	\setcounter{theorem}{0}
	\setcounter{figure}{0}
	%%%%%%%%%%%%%%%%%%%%%%%%%%%%%%
	\numberwithin{theorem}{section}

	\numberwithin{figure}{section}

	\section{Acknowledgments}
	
	The authors are appreciative of the fruitful discussions with Prof. Victor Roytburd. They also wish to thank Prof. Hanchun Yang for bringing reference \cite{Li92} to their attention and providing sound advice. C.-M. B. greatly acknowledges Prof. Mei Ming and the Department of Mathematics and Statistics of Yunnan University for their warm hospitality during his visiting positions in 2023 and 2024. This project was supported by the National Natural Science Foundation of China (No.12222116). The Institut de Mathématiques de Bourgogne receives support from EHPHI Graduate School (contract ANR-17-0002).

	%%%%%%%%%%%%%%%%%%%%%%%%%%%%%%%

	\appendix
	\section{Solving Riccati equation  ($\alpha=0.5$).}\label{appendixA}
	This first appendix shows the results of the calculation of the Riccati equation \eqref{Riccati} when the special value $\alpha = 0.5$ is considered.

	We recall that the Kummer's function $M(a;b;z)$ is a generalized hypergeometric series, often referred to as the confluent hypergeometric function of the first kind, given by 
	\begin{equation}
		M(a;b;z)= _{1}\!\!F_{1}(a,b,z) = \sum_{n=0}^{\infty}\frac{(a)_n z^n}{(b)_n n!},
	\end{equation}
	where $(a)_n$ is the Pochhammer symbol (see, e.g. \cite{Mathews22}).
	
	The equivalent form of the solution of the Riccati equation with the initial condition $T(0)=T_{i}$ is displayed as follows:
	\begin{align}\label{eq-Riccati solution}
		T(\tau)=&\left[-3 \left(\beta \tau +2\right)^{2} \left(\beta {} ^{}{ {  {M^{}}}}\! \left(\frac{2 \beta \sqrt{q_{0}}+\sqrt{2}\, \left(c^{2}-2 q_{0} \right)}{8 \beta \sqrt{q_{0}}};\frac{1}{2};\frac{\sqrt{2}\, \sqrt{q_{0}}}{\beta}\right)\left(-\frac{2 \sqrt{2}\, q_{0}^{\frac{3}{2}}}{3}\right.\right.\right.\notag\\
		&\left.\left.+\left(\left(c -T_{i} \right) \beta +\frac{c^{2}}{3}\right) \sqrt{2}\, \sqrt{q_{0}}+2 q_{0} \beta +\frac{\left(c -T_{i} \right) \left(c^{2}-2 q_{0} \right)}{3}\right)-2 \left(-\frac{4 \beta \sqrt{2}\, q_{0}^{\frac{3}{2}}}{3}+\frac{2 \beta \,c^{2} \sqrt{2}\, \sqrt{q_{0}}}{3}\right.\right.\notag\\			
		&\left.\left.+q_{0} \,\beta^{2}+\frac{\left(c^{2}-2 q_{0} \right)^{2}}{6}\right) {} ^{}{ {  {M^{}}}}\! \left(\frac{10 \beta \sqrt{q_{0}}+\sqrt{2}\, \left(c^{2}-2 q_{0} \right)}{8 \beta \sqrt{q_{0}}};\frac{3}{2};\frac{\sqrt{2}\, \sqrt{q_{0}}}{\beta}\right)\right) {} ^{}\notag\\			
		&\ \ { {  {M^{}}}}\! \left(\frac{14 \beta \sqrt{q_{0}}+\sqrt{2}\, \left(c^{2}-2 q_{0} \right)}{8 \beta \sqrt{q_{0}}};\frac{5}{2};\frac{\sqrt{2}\, \sqrt{q_{0}}\, \left(\beta \tau +2\right)^{2}}{4 \beta}\right)+3 \left(\beta \left(\left(\tau^{2} \left(c -T_{i} \right) \beta^{2}+\left(-4\right.\right.\right.\right.\notag\\			
		&\left.\left.\left.\left.+\left(6 c -4 T_{i} \right) \tau \right) \beta +8 c -4 T_{i} \right) \sqrt{2}\, \sqrt{q_{0}}+2 q_{0} \,\beta^{2} \tau^{2}+\left(\left(-2 c T_{i} +2 c^{2}+8 q_{0} \right) \tau -4 c +4 T_{i} \right) \beta\right.\right.\notag\\
		&\left.\left.+4 c^{2}-4 c T_{i} +8 q_{0} \right) {} ^{}{ {  {M^{}}}}\! \left(\frac{2 \beta \sqrt{q_{0}}+\sqrt{2}\, \left(c^{2}-2 q_{0} \right)}{8 \beta \sqrt{q_{0}}};\frac{1}{2};\frac{\sqrt{2}\, \sqrt{q_{0}}}{\beta}\right)\right.\notag\\
		&\left.-2 \left(-\sqrt{2}\, \left(\beta \tau +2\right)^{2} q_{0}^{\frac{3}{2}}+\frac{\left(\left(c^{2} \tau^{2}+2 c \tau -4\right) \beta^{2}+\left(4 c^{2} \tau +4 c \right) \beta +4 c^{2}\right) \sqrt{2}\, \sqrt{q_{0}}}{2}+\beta^{3} q_{0} \,\tau^{2}\right.\right.\notag\\
		&\left.\left.+4 q_{0} \,\beta^{2} \tau +\left(c \left(c^{2}-2 q_{0} \right) \tau -2 c^{2}+8 q_{0} \right) \beta +2 c^{3}-4 c q_{0} \right)\right.\notag\\
		&\left.{} ^{}{ {  {M^{}}}}\! \left(\frac{10 \beta \sqrt{q_{0}}+\sqrt{2}\, \left(c^{2}-2 q_{0} \right)}{8 \beta \sqrt{q_{0}}};\frac{3}{2};\frac{\sqrt{2}\, \sqrt{q_{0}}}{\beta}\right)\right)\notag\\ 
		&\beta {} ^{}{ {  {M^{}}}}\! \left(\frac{6 \beta \sqrt{q_{0}}+\sqrt{2}\, \left(c^{2}-2 q_{0} \right)}{8 \beta \sqrt{q_{0}}};\frac{3}{2};\frac{\sqrt{2}\, \sqrt{q_{0}}\, \left(\beta \tau +2\right)^{2}}{4 \beta}\right)-6 \left(\beta \tau +2\right) \left(\beta \left(2 \sqrt{2}\, q_{0}^{\frac{3}{2}}\right.\right.\notag\\	
		&\left.\left.+\left(\beta^{2}+\left(-c +T_{i} \right) \beta -c^{2}\right) \sqrt{2}\, \sqrt{q_{0}}+\left(c^{2}-4 q_{0} \right) \beta -\left(c -T_{i} \right) \left(c^{2}-2 q_{0} \right)\right)\right.\notag\\
		&\left.{} ^{}{ {  {M^{}}}}\! \left(\frac{6 \beta \sqrt{q_{0}}+\sqrt{2}\, \left(c^{2}-2 q_{0} \right)}{8 \beta \sqrt{q_{0}}};\frac{3}{2};\frac{\sqrt{2}\, \sqrt{q_{0}}}{\beta}\right)+2 \left(-\frac{4 \beta \sqrt{2}\, q_{0}^{\frac{3}{2}}}{3}+\frac{2 \beta \,c^{2} \sqrt{2}\, \sqrt{q_{0}}}{3}+q_{0} \,\beta^{2}\right.\right.\notag\\
		&\left.\left.+\frac{\left(c^{2}-2 q_{0} \right)^{2}}{6}\right) {} ^{}{ {  {M^{}}}}\! \left(\frac{14 \beta \sqrt{q_{0}}+\sqrt{2}\, \left(c^{2}-2 q_{0} \right)}{8 \beta \sqrt{q_{0}}};\frac{5}{2};\frac{\sqrt{2}\, \sqrt{q_{0}}}{\beta}\right)\right) {} ^{}\notag\\
		&{ {  {M^{}}}}\! \left(\frac{10 \beta \sqrt{q_{0}}+\sqrt{2}\, \left(c^{2}-2 q_{0} \right)}{8 \beta \sqrt{q_{0}}};\frac{3}{2};\frac{\sqrt{2}\, \sqrt{q_{0}}\, \left(\beta \tau +2\right)^{2}}{4 \beta}\right)+6 \left(\beta \left(\sqrt{2}\, \left(\tau \,\beta^{2}+\left(2+\left(-c\right.\right.\right.\right.\right.\notag\\
		&\left.\left.\left.\left.\left.+T_{i} \right) \tau \right) \beta -4 c +2 T_{i} \right) \sqrt{q_{0}}+\left(-2 q_{0} \tau +2 c \right) \beta -2 c^{2}+2 c T_{i} -4 q_{0} \right) {} ^{}\right.\notag\\
		&\left.{ {  {M^{}}}}\! \left(\frac{6 \beta \sqrt{q_{0}}+\sqrt{2}\, \left(c^{2}-2 q_{0} \right)}{8 \beta \sqrt{q_{0}}};\frac{3}{2};\frac{\sqrt{2}\, \sqrt{q_{0}}}{\beta}\right)+2 \left(-\frac{\sqrt{2}\, \left(\beta \tau +2\right) q_{0}^{\frac{3}{2}}}{3}\right.\right.\notag\\
		&\left.\left.+\frac{c \sqrt{2}\, \left(\beta c \tau+6 \beta +2 c \right) \sqrt{q_{0}}}{6}+q_{0} \,\beta^{2} \tau +2 q_{0} \beta +\frac{c \left(c^{2}-2 q_{0} \right)}{3}\right) {} ^{}\right.\notag\\			
		&\left.{ {  {M^{}}}}\! \left(\frac{14 \beta \sqrt{q_{0}}+\sqrt{2}\, \left(c^{2}-2 q_{0} \right)}{8 \beta \sqrt{q_{0}}};\frac{5}{2};\frac{\sqrt{2}\, \sqrt{q_{0}}}{\beta}\right)\right) \beta\notag\\
		&\left.{} ^{}{ {  {M^{}}}}\! \left(\frac{2 \beta \sqrt{q_{0}}+\sqrt{2}\, \left(c^{2}-2 q_{0} \right)}{8 \beta \sqrt{q_{0}}};\frac{1}{2};\frac{\sqrt{2}\, \sqrt{q_{0}}\, \left(\beta \tau +2\right)^{2}}{4 \beta}\right)\right]\notag\\
		&\left[6 \left(\left(\beta \tau +2\right) \left(\beta {}  ^{}{ {  {M^{}}}}\! \left(\frac{2 \beta \sqrt{q_{0}}+\sqrt{2}\, \left(c^{2}-2 q_{0} \right)}{8 \beta \sqrt{q_{0}}};\frac{1}{2};\frac{\sqrt{2}\, \sqrt{q_{0}}}{\beta}\right) \left(\sqrt{2}\, \sqrt{q_{0}}+c -T_{i} \right)\right.\right.\right.\notag\\
		&\left.\left.\left.-\left(\beta \sqrt{2}\, \sqrt{q_{0}}+c^{2}-2 q_{0} \right) {}  ^{}{ {  {M^{}}}}\! \left(\frac{10 \beta \sqrt{q_{0}}+\sqrt{2}\, \left(c^{2}-2 q_{0} \right)}{8 \beta \sqrt{q_{0}}};\frac{3}{2};\frac{\sqrt{2}\, \sqrt{q_{0}}}{\beta}\right)\right) {}  ^{}\right.\right.\notag\\
		&\left.\left.{ {  {M^{}}}}\! \left(\frac{6 \beta \sqrt{q_{0}}+\sqrt{2}\, \left(c^{2}-2 q_{0} \right)}{8 \beta \sqrt{q_{0}}};\frac{3}{2};\frac{\sqrt{2}\, \sqrt{q_{0}}\, \left(\beta \tau +2\right)^{2}}{4 \beta}\right)+2 \left(\beta \left(-\sqrt{2}\, \sqrt{q_{0}}+\beta -c\right.\right.\right.\right. \notag\\
		&\left.\left.\left.\left.+T_{i} \right) {}  ^{}{ {  {M^{}}}}\! \left(\frac{6 \beta \sqrt{q_{0}}+\sqrt{2}\, \left(c^{2}-2 q_{0} \right)}{8 \beta \sqrt{q_{0}}};\frac{3}{2};\frac{\sqrt{2}\, \sqrt{q_{0}}}{\beta}\right)+{}  ^{}\right.\right.\right.\notag\\
		&\left.\left.\left.{ {  {M^{}}}}\! \left(\frac{14 \beta \sqrt{q_{0}}+\sqrt{2}\, \left(c^{2}-2 q_{0} \right)}{8 \beta \sqrt{q_{0}}};\frac{5}{2};\frac{\sqrt{2}\, \sqrt{q_{0}}}{\beta}\right) \left(\beta \sqrt{2}\, \sqrt{q_{0}}+\frac{c^{2}}{3}-\frac{2 q_{0}}{3}\right)\right) {}  ^{}\right.\right.\notag\\
		&\left.\left.{ {  {M^{}}}}\! \left(\frac{2 \beta \sqrt{q_{0}}+\sqrt{2}\, \left(c^{2}-2 q_{0} \right)}{8 \beta \sqrt{q_{0}}};\frac{1}{2};\frac{\sqrt{2}\, \sqrt{q_{0}}\, \left(\beta \tau +2\right)^{2}}{4 \beta}\right)\right) \beta\right]^{-1}.
	\end{align}
	Then the functions determining the implicit function curves $\beta_{0}(c)$ and $\beta_{1}(c)$ are read as $T(-\ell')-T_{0}(c)=0$ and $T(-\ell')-T_{1}(c)=0$, respectively:
	\begin{align}\label{eq-implicit beta0}
		T(-\ell')-T_{0}(c)=&\left[-3 \beta \sqrt{c^{2}-2 q_{0}}\, \left(-\sqrt{2}\, \sqrt{q_{0}}+\beta -c + T_{i} \right)\right.\notag\\ 
		&\left.M \left(\frac{6 \beta \sqrt{q_{0}}+\left(c^{2}-2 q_{0} \right) \sqrt{2}}{8 \beta \sqrt{q_{0}}};\frac{3}{2};\frac{\sqrt{2}\, \sqrt{q_{0}}}{\beta}\right)-3 \left(\beta \sqrt{2}\, \sqrt{q_{0}}+\frac{c^{2}}{3}-\frac{2 q_{0}}{3}\right)\right.\notag\\
		&M \left(\frac{14 \beta \sqrt{q_{0}}+\left(c^{2}-2 q_{0} \right) \sqrt{2}}{8 \beta \sqrt{q_{0}}};\frac{5}{2};\frac{\sqrt{2}\, \sqrt{q_{0}}}{\beta}\right)\sqrt{c^{2}-2 q_{0}}+3 \beta \left(-\beta \left(\sqrt{2}\, \sqrt{q_{0}}\right.\right.\notag\\
		&\left.\left.+c-T_{i}\right)M \left(\frac{2 \beta \sqrt{q_{0}}+\left(c^{2}-2 q_{0} \right) \sqrt{2}}{8 \beta \sqrt{q_{0}}};\frac{1}{2};\frac{\sqrt{2}\, \sqrt{q_{0}}}{\beta}\right)\right.\notag\\
		&\left.\left.+M \left(\frac{10 \beta \sqrt{q_{0}}+\left(c^{2}-2 q_{0} \right) \sqrt{2}}{8 \beta \sqrt{q_{0}}};\frac{3}{2};\frac{\sqrt{2}\, \sqrt{q_{0}}}{\beta}\right) \left(\beta \sqrt{2}\, \sqrt{q_{0}}+c^{2}-2 q_{0} \right)\right)\right]\notag\\
		&\left[3 \beta \left(-\sqrt{2}\, \sqrt{q_{0}}+\beta -c +T_{i} \right) M \left(\frac{6 \beta \sqrt{q_{0}}+\sqrt{2}\, \left(c^{2}-2 q_{0} \right)}{8 \beta \sqrt{q_{0}}};\frac{3}{2};\frac{\sqrt{2}\, \sqrt{q_{0}}}{\beta}\right)\right.\notag\\
		&\left.+3 M \left(\frac{14 \beta \sqrt{q_{0}}+\sqrt{2}\, \left(c^{2}-2 q_{0} \right)}{8 \beta \sqrt{q_{0}}};\frac{5}{2};\frac{\sqrt{2}\, \sqrt{q_{0}}}{\beta}\right) \left(\beta \sqrt{q_{0}}\, \sqrt{2}+\frac{c^{2}}{3}-\frac{2 q_{0}}{3}\right)\right]^{-1},
	\end{align}
	
	\begin{align}\label{eq-implicit beta1}
		T(-\ell')-T_{1}(c)=&\left[3 \beta \sqrt{c^{2}-2q_{0}}\, \left(-\sqrt{2}\, \sqrt{q_{0}}+\beta -c +T_{i} \right)\right.\notag\\
		&M\left(\frac{6 \beta \sqrt{q_{0}}+\sqrt{2}\, \left(c^{2}-2 q_{0} \right)}{8 \beta \sqrt{q_{0}}};\frac{3}{2};\frac{\sqrt{2}\, \sqrt{q_{0}}}{\beta}\right)+3 \left(\beta \sqrt{q_{0}}\, \sqrt{2}+\frac{c^{2}}{3}-\frac{2 q_{0}}{3}\right)\notag\\ 
		&M \left(\frac{14 \beta \sqrt{q_{0}}+\sqrt{2}\, \left(c^{2}-2 q_{0} \right)}{8 \beta \sqrt{q_{0}}};\frac{5}{2};\frac{\sqrt{2}\, \sqrt{q_{0}}}{\beta}\right)\sqrt{c^{2}-2 q_{0}}\,+3 \beta \left(-\beta\left(\sqrt{2}\, \sqrt{q_{0}}\right.\right.\notag\\
		&\left.\left.+c -T_{i} \right)  M \left(\frac{2 \beta \sqrt{q_{0}}+\sqrt{2}\, \left(c^{2}-2 q_{0} \right)}{8 \beta \sqrt{q_{0}}};\frac{1}{2};\frac{\sqrt{2}\, \sqrt{q_{0}}}{\beta}\right)\right.\notag\\
		&\left.\left.+M\left(\frac{10 \beta \sqrt{q_{0}}+\sqrt{2}\, \left(c^{2}-2 q_{0} \right)}{8 \beta \sqrt{q_{0}}};\frac{3}{2};\frac{\sqrt{2}\, \sqrt{q_{0}}}{\beta}\right) \left(\beta \sqrt{q_{0}}\, \sqrt{2}+c^{2}-2 q_{0} \right)\right)\right]\notag\\
		&\left[3 \beta \left(-\sqrt{2}\, \sqrt{q_{0}}+\beta -c +T_{i} \right)  M \left(\frac{6 \beta \sqrt{q_{0}}+\sqrt{2}\, \left(c^{2}-2 q_{0} \right)}{8 \beta \sqrt{q_{0}}};\frac{3}{2};\frac{\sqrt{2}\, \sqrt{q_{0}}}{\beta}\right)\right.\notag\\
		&\left.+3 M \left(\frac{14 \beta \sqrt{q_{0}}+\sqrt{2}\, \left(c^{2}-2 q_{0} \right)}{8 \beta \sqrt{q_{0}}};\frac{5}{2};\frac{\sqrt{2}\, \sqrt{q_{0}}}{\beta}\right) \left(\beta \sqrt{q_{0}}\, \sqrt{2}+\frac{c^{2}}{3}-\frac{2 q_{0}}{3}\right)\right]^{-1}.
	\end{align}

	\section{Numerical shooting method for the Arrhenius function}\label{AppendixB}
	In this appendix, we give a discrete method that can be combined with the qualitative results to simulate the bifurcation diagram for the Arrhenius law. To facilitate the use of the Runge-Kutta method for vector filed $-\tilde{X}^{U}$, we take the initial condition $(U(0),T(0))=(0,T_{0}(c))$ or $(U(0),T(0))=(0,T_{1}(c))$, the final condition $((U(\ell'), T(\ell')) = (\frac{1}{1-\alpha}, T_{i})$. In the previous section, where we showed that for the fixed $c_{1}\in[c^{cj},+\infty]$, the transition map $Z (T_0(c_{1}),\beta,c_{1})$ is a monotone continuously increasing function with respect to $\beta$ (see Proposition \ref{prop-rotating}), furthermore, as $\beta \to +\infty$, $Z (T_0(c_{1}),\beta,c_{1}) \to +\infty$; $Z (T_0(c_{1}),\beta,c_{1})<1$ when $\beta>0$ is small enough (see Proposition \ref{prop-betainfty} and Proposition \ref{prop-beta0}). Consequently, by continuity and monotonicity, there exists a unique value $\beta=\beta_{0}(c_{1})\in (0,+\infty)$ such that $\mathcal{Z} (T_0(c_{1}),\beta,c_{1})=1$.
	The corresponding $\ell'$ of each pair of parameters $(\beta,c)$ is finite, has the estimate
	\begin{align}
		\frac{1}{\beta\exp(-\frac{T_{a}}{2T_{0}(c)})}\leqslant\ell'\leqslant\frac{1}{\beta\exp(-\frac{T_{a}}{T_{i}})}.
	\end{align}
	To illustrate the numerical method, we consider the computation of the curve $\beta_0(c)$. The procedure involves discretizing the parameter space and determining $\beta_0(c)$ for each fixed $c$. The steps are as follows:
	
	\begin{enumerate}
		\item 
		We restrict the computational region to $(\beta,c)\in[\beta^{low},\beta^{up}]\times[c^{cj},c^{up}]$, with $c^{up}>c^{cj}$, $\beta^{up}>\beta^{low}>0$. 
		
		\item 
		We discretize the parameter space with uniform meshes: the interval $[c^{cj},c^{up}]$ with step size $h_c>0$ and $[\beta^{low},\beta^{up}]$ with step size $h_\beta>0$. The resulting grid points are given by:
		\begin{align}
			(\beta_j,c_i)=(\beta^{low}+jh_\beta,c^{cj}+ih_c),
		\end{align}
		where $i=0,1,\ldots,N_c$, $j=0,1,\ldots,N_\beta$, with $N_c=\frac{c^{up}-c^{cj}}{h_c}$ and $N_\beta=\frac{\beta^{up}-\beta^{low}}{h_\beta}$.
		
		%Discretizing the interval $c\in[c^{cj},c^{up}]$ with a step $h_{c}>0$ and the interval $\beta\in[\beta^{low},\beta^{up}]$ with step $h_{\beta}$. All points on the grid can be expressed as $(c_{i},\beta_{j})=(c^{cj}+ih_{c},\beta^{low}+jh_{\beta}),\ i=0,1...\frac{c^{up}-c^{cj}}{h_{c}},\ j=0,1...\frac{\beta^{up}-\beta^{low}}{h_{\beta}}$;
		\item 
		%Fix $c_{i}$ first, for all the value of $\beta_{j}$, compute the solution of the system $\eqref{eq-tildeXU}$ by the 4/5th order Runge-Kutta  method on the interval $A=[0,\frac{1}{K\beta\exp(-\frac{T_{a}}{T_{i}})}]$ with the initial condition $(T(0)=T_{0}(c_{i}),U(0)=0)$, the step $d_{(c_{i},\beta_{j})}$ is the step of the numerical method on the interval A. The discretization of interval A can be represented as $\tau_{k}=kd_{(c_{i},\beta_{j})},\ k=0,1...\frac{1}{Kd_{(c_{i},\beta_{j})}\beta\exp(-\frac{T_{a}}{T_{i}})}$; %RelTol=1e-19,AbsTol=1e-22;
		For each fixed $c_i$, we solve system $-\tilde{X}^{U}$ for all values of $\beta_j$ using the 4/5th order Runge-Kutta method. The integration is performed over $[0,\frac{1}{K\beta\exp(-\frac{T_a}{T_i})}]$, with initial conditions $(T(0),U(0))=(T_0(c_i),0)$. The numerical step size $d_{(\beta_j,c_i)}$, the temporal mesh points are given by:
		\begin{align}
			\tau_k=kd_{(\beta_j,c_i)}, \quad k=0,1,\ldots,N_{ij},
		\end{align}
		where $N_{ij}=\frac{1}{Kd_{(\beta_j,c_i)}\beta\exp(-\frac{T_a}{T_i})}$.
		\item
		For each fixed $c_i$ and $\beta_{j}$, we take $\bar{\ell}'{(\beta_j,c_i)}=\tau_{s}>0$ as an approximation of $\ell'(\beta,c)$ such that:
		\begin{align}
			|U(\bar{\ell}'{(\beta_j,c_i)},\beta_j,c_i)-\frac{1}{1-\alpha}| =|U(\tau_s,\beta_j,c_i)-\frac{1}{1-\alpha}|=\underset{\tau_k}{\min} |U(\tau_k,\beta_j,c_i)-\frac{1}{1-\alpha}|.
		\end{align}
		%	This value $\bar{\ell}'{(c_i,\beta_j)}$ serves as our numerical approximation of $\ell'$.
		\item 
		%Find $\bar{\beta}_{0}(c_{i})>0$ such that $   |T(\bar{\ell}'_{(c_{i},\bar{\beta}_{0}(c_{i}))},c_{i},\bar{\beta}_{0}(c_{i}))-T_{i}|=\underset{\beta_{j}}{\min}|T(\bar{\ell}'_{(c_{i},\beta_{j})},c_{i},\beta_{j})-T_{i}|$;
		
		For each $c_i$, we determine $\beta_{0}(c_i)=\beta_{r}>0$ such that:
		\begin{align}
			|T(\bar{\ell}'{(\beta_0(c_i),c_i)},\beta_0(c_i),c_i)-T_i| =|T(\bar{\ell}'{(\beta_r,c_i)},\beta_r,c_i)-T_i|= \underset{\beta_{j}}{\min}|T(\bar{\ell}'{(\beta_j,c_i)},\beta_j,c_i)-T_i|.
		\end{align} 
		\item 
		The bifurcation curve $\beta_0(c)$ is visualized by plotting the discrete data points $(\beta_{0}(c_i),c_i)$, where $i=0,1,\ldots,N_c$ with $N_c=\frac{c^{up}-c^{cj}}{h_c}$ in the parameter space.
	\end{enumerate}
	
	Following the numerical procedure described for $\beta_0(c)$, we can similarly obtain the bifurcation curve $\beta_1(c)$ by replacing $(U(0),T(0))=(T_1(c),0)$ with the $(U(0),T(0))=(T_0(c),0)$.

	The numerical computations are performed with the following specifications:
	\begin{itemize}
		\item 
		MATLAB's ODE solver $\mathtt{ODE45}$ with relative tolerance $\mathtt{RelTol}=10^{-19}$ and absolute tolerance $\mathtt{AbsTol}=10^{-22}$;
		\item
		Discretization steps: $h_c=0.02$, $h_\beta=0.006$;
		\item
		Parameter bounds: $[\beta^{low},\beta^{up}]\times[c^{cj},c^{up}]=[0.01,9]\times[0,9]$.
	\end{itemize}
	Here, we remark that the integer index $N_{ij}$ is well defined because the adaptive time-stepping algorithm dynamically adjusts $d_{(c_i,\beta_j)}$ to meet the specified error tolerances while preserving the discretization of the fixed time domain.
	
	Finally, note that to obtain a numerical simulation of the bifurcation curves $\beta_0(c)$ and $\beta_1(c)$ in suitable area, it may be necessary to adjust the parameter ranges $[c^{cj},c^{up}]$ and $[\beta^{low},\beta^{up}]$ according to the specific behavior of the system.

	\end{document}